\documentclass[a4paper,11pt,reqno,noindent]{amsart}
\usepackage[centertags]{amsmath}
\usepackage{amsfonts,amssymb,amsthm,dsfont,cases,amscd,esint,enumerate, stmaryrd}
\usepackage[T1]{fontenc}
\usepackage[english]{babel}
\usepackage[applemac]{inputenc}
\usepackage{newlfont}
\usepackage{color}
\usepackage[body={17cm,21.5cm}, top = 1in, bottom = 1.5in, centering]{geometry} 
\usepackage{fancyhdr}
\pagestyle{fancy}
\fancyhf{}

\fancyhead[RO,LE]{\footnotesize\thepage}
\fancyhead[LO]{\scriptsize\rightmark}
\fancyhead[RE]{\scriptsize\leftmark}

\setlength{\headheight}{12pt}  
\setlength{\headsep}{25pt} 
\usepackage{enumerate}

\theoremstyle{plain}
\newtheorem{theor}{Theorem}[section]
\newtheorem{lem}[theor]{Lemma}
\newtheorem{prop}[theor]{Proposition}
\newtheorem{cor}[theor]{Corollary}
\theoremstyle{definition}

\newtheorem{rem}[theor]{Remark}
\newtheorem{defin}[theor]{Definition}

\numberwithin{equation}{section}

\newcommand{\Xd}{\mathbb X}

\newcommand{\R}{\mathbb R}
\newcommand{\Z}{\mathbb Z}
\newcommand{\T}{\mathbb T}

\newcommand{\Pc}{\mathcal{P}}
\newcommand{\loc}{{\operatorname{loc}}}
\newcommand{\Id}{\operatorname{Id}}
\newcommand{\Ker}{\operatorname{Ker}}
\newcommand{\Ld}{{L}}
\newcommand{\Div}{{\operatorname{div}}}
\newcommand{\Sym}{{\operatorname{sym}}}
\newcommand{\3}{\operatorname{|\hspace{-0.4mm}|\hspace{-0.4mm}|}}
\newcommand{\bigN}{\operatorname{\big|\hspace{-0.4mm}\big|\hspace{-0.4mm}\big|}}
\newcommand{\BigN}{\operatorname{\Big|\hspace{-0.4mm}\Big|\hspace{-0.4mm}\Big|}}
\newcommand{\biggN}{\operatorname{\bigg|\hspace{-0.4mm}\bigg|\hspace{-0.4mm}\bigg|}}
\newcommand{\step}[1]{\noindent \textbf{Step} #1.}
\newcommand{\ddr}{\mathrm{d}}

\usepackage{tikz}
\usepackage[colorlinks,citecolor=black,urlcolor=black]{hyperref}

\title[Mean-field approximation, Gibbs relaxation, and cross estimates]{Mean-field approximation, Gibbs relaxation,\\and cross estimates}
\author[A. Bernou]{Armand Bernou}
\address[Armand Bernou]{Universit\'e de Lyon, Universit\'e Claude Bernard Lyon 1, Laboratoire de Sciences Actuarielle et Financi\`ere, 50 Avenue Tony Garnier, F-69007 Lyon, France}
\email{armand.bernou@univ-lyon1.fr}
\author[M. Duerinckx]{Mitia Duerinckx}
\address[Mitia Duerinckx]{Universit\'e Libre de Bruxelles, D\'epartement de Math\'ematique, 1050~Brussels, Belgium}
\email{mitia.duerinckx@ulb.be}

\begin{document}

\begin{abstract}
We study the propagation of chaos and relaxation to Gibbs equilibrium for a system of~$N$ classical Brownian particles with weak mean-field interactions. It is well known that propagation of chaos holds uniformly in time with rate~$O(N^{-1})$ and that Gibbs relaxation holds uniformly in~$N$ with exponential rate~$O(e^{-ct})$. We go one step further by establishing a cross estimate that simultaneously captures both effects: the joint deviation between chaos propagation and Gibbs relaxation is of order~$O(N^{-1}e^{-ct})$. In particular, for translation-invariant systems, this yields an accelerated propagation of chaos, with the mean-field approximation error at the level of the one-particle density improving from~$O(N^{-1})$ to~$O(N^{-1}e^{-ct})$. Our approach relies on a detailed analysis of the BBGKY hierarchy for correlation functions, and applies to both underdamped and overdamped Langevin dynamics with merely bounded interaction forces. In addition, we obtain new quantitative results on Gibbs relaxation and provide partial extensions beyond the weak interaction regime.
\end{abstract}

\maketitle
\setcounter{tocdepth}{1}
\tableofcontents
\allowdisplaybreaks

\bigskip\noindent
{\textbf{MSC-class:}} Primary 60K35; Secondary 35Q70, 82C22, 82C31, 35Q84, 91D30.  \\
\noindent 
{\textbf{Keywords:}} interacting Brownian particles; propagation of chaos; Gibbs relaxation; cross estimates; BBGKY hierarchy; hypocoercivity.

\section{Introduction}
\subsection{General overview}
We consider the Langevin dynamics for a system of $N$ Brownian particles with weak mean-field interactions, moving in a confining potential in $\R^d$, $d\ge1$.
For the purpose of this introduction, we focus on the case of the underdamped dynamics, but all our results hold unchanged for the overdamped dynamics; see Section~\ref{sec:ext-overdamped} below.
More precisely, we consider the following system of coupled SDEs: for $1\le i\le N$,
\begin{equation}\label{eq:Langevin}
\left\{\begin{array}{ll}
\ddr X^{i,N}_t= V^{i,N}_t \, \ddr t, \qquad &t \ge 0, \\[1mm]
\ddr V^{i,N}_t = - \beta V^{i,N}_t \, \ddr t + \tfrac\kappa N\sum_{j=1}^NK(X^{i,N}_t,X_t^{j,N})\,\ddr t-\nabla A(X_t^{i,N})\ddr t +\sqrt2\ddr B_t^{i}, \qquad &t \ge 0, \\[2mm]
(X^{i,N}_t, V^{i,N}_t)|_{t=0} = (X^{i,N}_\circ, V^{i,N}_\circ) & 
\end{array}\right.
\end{equation}
where $\{(X^{i,N},V^{i,N})\}_{1 \le i \le N}$ is the set of particle positions and velocities in the phase space $\Xd:=\R^d\times\R^d$,
where~$\{B^i\}_i$ are independent and identically distributed $d$-dimensional Brownian motions,
where $K$ is a bounded force kernel,
\begin{equation}\label{eq:KLinfty}
K\in L^\infty(\R^{2d})^d,
\end{equation}
where $A$ is a uniformly convex confining potential,
\begin{equation}\label{eq:confinement-A}
\nabla^2A\ge\lambda\Id,\qquad\text{for some $\lambda>0$},
\end{equation}
where $\beta > 0$ is a given constant,
and where $\kappa > 0$ is a coupling parameter describing the strength of interactions.
We shall primarily assume $\kappa$ to be small enough (independently of $N$) to guarantee good properties of the mean-field dynamics --- in particular, the uniqueness of the invariant measure.
For the overdamped dynamics, however, we also establish partial results that remain valid for arbitrary coupling strengths; see Section~\ref{sec:extensions} below.

\subsubsection{Propagation of chaos}
In terms of a probability density~$F^N$ on the $N$-particle phase space~$\Xd^N$, we recall that the dynamics~\eqref{eq:Langevin} is (formally) equivalent to the Liouville equation
\begin{multline}\label{eq:Liouville_Langevin}
\partial_t F^N + \sum_{i=1}^N v_i \cdot \nabla_{x_i} F^N=  \sum_{i=1}^N \Div_{v_i}\big((\nabla_{v_i} + \beta v_i) F^N \big) \\[-3mm]
- \frac{\kappa}{N} \sum_{i,j=1}^N K(x_i-x_j) \cdot \nabla_{v_i} F^N + \sum_{i=1}^N \nabla_{x_i} A \cdot \nabla_{v_i} F^N.
\end{multline}
We assume particles to be exchangeable, which amounts to the symmetry of $F^N$ in its $N$ variables $z_i=(x_i,v_i)\in\Xd$, $1\le i\le N$, and we assume for simplicity that they are exactly chaotic initially, meaning that initial positions and velocities $\{Z^{i,N}_\circ := (X^{i,N}_\circ, V^{i,N}_\circ)  \}_{1\le i\le N}$ are independent and have a common density $\mu_\circ\in\Pc(\Xd)$: equivalently, this means that $F^N$ is initially tensorized,
\begin{equation}\label{eq:chaos-in}
F_t^N|_{t=0}=\mu_\circ^{\otimes N}.
\end{equation}
Note that this is not really restrictive in view of de Finetti's theorem.
We now consider a global weak solution $F^N\in C(\R_+;\Pc\cap\Ld^1(\Xd))$ of the Liouville equation~\eqref{eq:Liouville_Langevin} with these tensorized data.
In the regime of a large number $N\gg1$ of particles, a classical question then concerns the evolution of finite subsets of `typical' particles in the system, as described by marginals of~$F^N$,
\[F^{N,m}(z_1,\ldots,z_m)\,:=\,\int_{\Xd^{N-m}}F^N(z_1,\ldots,z_N)\,\ddr z_{m+1}\ldots \ddr z_N,\qquad 1\le m\le N.\]
Correlations between particles are expected to remain negligible over time, so that the initial chaotic behavior~\eqref{eq:chaos-in} would be approximately propagated: for any fixed $m\ge1$ and $t\ge0$,
\begin{equation}\label{eq:prop-chaos}
F^{N,m}_t- (F^{N,1}_t)^{\otimes m}\,\to\,0,\qquad\text{as $N\uparrow\infty$}.
\end{equation}
As is well-known, this propagation of chaos would imply the mean-field limit
\begin{equation}\label{eq:cvg_to_mu}
F^{N,m}_t\,\to\, \mu_t^{\otimes m}, \qquad\text{as~$N\uparrow\infty$},
\end{equation}
where $\mu_t$ is the solution of the non-linear Vlasov-Fokker-Planck equation
\begin{equation}
\label{eq:VFP}
\left\{\begin{array}{l}
\partial_t\mu + v \cdot \nabla_x \mu =\Div_v((\nabla_v + \beta v) \mu) - \kappa (K \ast \mu) \cdot \nabla_v \mu + \nabla_x A \cdot \nabla_v \mu, \qquad t\ge0,\\
\mu|_{t=0}=\mu_\circ,
\end{array}\right.
\end{equation}
with the short-hand notation $K\ast\mu(z):=\int_{\Xd} K(x,y)\mu(y,w)\,\ddr y \ddr w$ for $z = (x,v) \in \Xd$. This topic has been extensively investigated since the 1990s. 
Quantitative estimates for the mean-field approximation~\eqref{eq:cvg_to_mu} have attracted much interest in recent years; see e.g.~\cite{Bolley_2010} for $K$ Lipschitz, \cite{Lacker-21} for $K$ bounded, and~\cite{BJS-22} for $K$ singular.
Uniform-in-time estimates cannot be expected in general: indeed, the mean-field equation~\eqref{eq:VFP} may admit multiple invariant measures while the $N$-particle system is uniquely ergodic. Yet, uniform-in-time estimates have recently been obtained in various settings for which the mean-field equilibrium is unique~\cite{Bolley_2010,Monmarche_2017,Delarue_Tse_21,Guillin_2022,Lacker-LeFlem-23,Chen_2024}:
in particular, for $K$ Lipschitz and $\kappa$ small enough, the following optimal estimate was obtained in~\cite{Lacker-LeFlem-23}, for all $1\le m\le N$ and~$t\ge0$,
\begin{align}\label{eq:prop_chaos_Langevin_W}
\mathcal{W}_2(F^{N,m}_t, \mu_t^{\otimes m}) \lesssim \frac{m}{N},
\end{align}
where $\mathcal W_2$ is the $2$-Wasserstein distance.
Such a uniform-in-time estimate does not only rely on the propagation of initial chaos, but also requires some mechanism to ensure that correlations created by particle interactions are damped over time: this is typically a consequence of ergodic properties of the linearized mean-field operator, which indeed drives the correlation dynamics. On this matter, we also refer to~\cite{Bolley_2010, BDM_25}, where the damping of initial correlations is captured explicitly.

While propagation of chaos~\eqref{eq:prop_chaos_Langevin_W} amounts to estimating correlations in the particle system, it is natural to further investigate the size of higher-order correlations: in turn, this provides a hierarchy of higher-order corrections to the mean-field approximation~\eqref{eq:cvg_to_mu} (see Bogolyubov correction in~\cite{MD-21,BD_2024,DJ-25} and higher orders in~\cite{Hess_Childs_2023}).
The two-particle correlation function is simply defined as
\begin{equation}\label{eq:def-GN2}
G^{N,2} \,:=\, F^{N,2} - (F^{N,1})^{\otimes 2},
\end{equation}
which captures the defect to propagation of chaos at the level of two-particle statistics. Higher-order correlation functions $\{G^{N,m}\}_{2\le m\le N}$ describe finer deviations; their definition is postponed to Section~\ref{sec:preliminary}. Formal considerations lead to expect the following scaling,
\begin{equation}\label{eq:control_Gm}
G^{N,m} = O(N^{1-m}), \qquad 2 \le m \le N,
\end{equation}
up to some dependence on $m$.
In recent years, such estimates have been established in various settings~\cite{MD-21,Hess_Childs_2023,DJ-25}, also uniformly in time~\cite{BD_2024,Xie-24,BDM_25}.

\subsubsection{Relaxation to equilibrium}
An advantage of uniform-in-time estimates such as~\eqref{eq:prop_chaos_Langevin_W}, or~\eqref{eq:control_Gm}, is that they allow to connect mean-field approximation and relaxation to equilibrium; see Figure~\ref{fig1} below. Let us briefly recall what is known regarding relaxation.

At the level of the mean-field dynamics~\eqref{eq:VFP}, if $K$ is bounded and if $\kappa$ is small enough, it is well known that there is a unique steady state~$M$ and that the solution $\mu_t$
converges exponentially,
\begin{equation*}
\mathcal W_2(\mu_t,M)+\|\mu_t-M\|_{L^2(M^{-1})}\,\le\, Ce^{-c_0t},
\end{equation*}
see e.g.~\cite{Bolley_2010} or Lemma~\ref{lem:ergodic-strong2}(i) below.
In the special case of a force kernel $K$ deriving from a potential, $K(x,y) = -\nabla W(x-y)$,
the steady state can be characterized as the solution of the fixed-point mean-field Gibbs equation
\begin{equation}\label{eq:def-M-eq}
M\,=\,Z[M]^{-1}e^{-\beta H[M]},\qquad H[M](x,v):=\tfrac12|v|^2 + A(x) + \kappa W \ast M(x),
\end{equation}
where $Z[M]\in\R_+$ is the normalization constant ensuring $\int_{\Xd} M= 1$.
Note that this equation has indeed a unique solution provided $\kappa \beta \|W\|_{\Ld^\infty(\R^d)} < 1$. 

At the level of the $N$-particle system, for fixed $N$, the existence and uniqueness of the steady state~$M^N$ for the Liouville equation~\eqref{eq:Liouville_Langevin} follows e.g.\@ from~\cite{Stroock1997} (without any smallness condition on~$\kappa$). Although not explicit in general, it reduces to the usual Gibbs equilibrium if $K(x,y)=-\nabla W(x-y)$,
\begin{align}
\label{eq:Gibbs_eq}
\quad M^N:=(Z^N)^{-1}e^{-\beta H^N},\qquad H^N(z_1,\ldots,z_N):=\sum_{i=1}^N\Big(\tfrac12|v_i|^2+A(x_i)+\tfrac\kappa{2N}\sum_{j=1}^NW(x_i-x_j)\Big),
\end{align} 
where $Z^N$ is the normalization constant. Similarly as for marginals of $F^N$, marginals of $M^N$ will be denoted by $\{M^{N,m}\}_{1\le m\le N}$.
For $K$ Lipschitz and for $\kappa$ small enough, the uniform-in-$N$ exponential relaxation of the Liouville solution~$F^N$ toward equilibrium~$M^N$ was first established in~\cite[Remark~13]{Bolley_2010} using coupling techniques: specifically, for all $1 \le m \le N$ and~$t\ge0$,
\begin{equation}\label{eq:gibbs-relax}
\mathcal W_2(F^{N,m}_t,M^{N,m})\,\le \, C e^{-c_0t}\mathcal W_2(\mu_\circ^{\otimes m},M^{N,m})\,\le \, Cm^\frac12 e^{-c_0t},
\end{equation}
for some constants $C, c_0>0$ independent of $N,m$.
Hypocoercivity techniques~\cite{Villani_2009} ensure that the same must hold in~$L^2$ norms with equilibrium weight, but with constants a priori depending on~$N$.
More recently, based on uniform-in-$N$ log-Sobolev and Poincar\'e inequalities, it was shown in~\cite{Monmarche_2017,Guillin_2021} that exponential relaxation in weighted~$L^2$ norms and in relative entropy actually holds with a time constant $c_0$ independent of $N$, but a priori still with an $N$-dependent prefactor: more precisely, for all~$1\le m\le N$ and~$t\ge0$,
\begin{eqnarray}
\textstyle\int_{\Xd^m}F_t^{N,m}\log({F_t^{N,m}}/{M^{N,m}})&\le& CNe^{-c_0t},\nonumber\\
\|F_t^{N,m}-M^{N,m}\|_{L^2((M^{-1})^{\otimes m})}&\le&C^{N}e^{-c_0t}.\label{eq:relax-old}
\end{eqnarray}
These estimates guarantee relaxation in relative entropy only for $t\gg\log N$, and in weighted $L^2$ norms only for $t\gg N$, in contrast with the $\mathcal W_2$-estimate~\eqref{eq:gibbs-relax} for $t\gg1$. We will show in the sequel how to improve upon these results and remove the $N$-dependent prefactors.

\subsubsection{Cross error estimate}
The Bogolyubov correction to the mean-field description is the next-order contribution arising from pair correlations (see e.g.~\cite{MD-21,BD_2024,DJ-25}), but its precise effects are often difficult to interpret. For conservative systems, the Lenard-Balescu theory predicts that this correction describes the very slow relaxation of the system toward equilibrium (see e.g.~\cite{MD-21}). In the present setting of Brownian particles, however, diffusion already drives relaxation on an $O(1)$ timescale, which raises the question of what the corresponding Bogolyubov correction describes here. As we shall see, it simply accounts for the fact that the $N$-particle Gibbs equilibrium differs from the tensorized mean-field equilibrium: the correction captures the emergence of these nontrivial finite-$N$ equilibrium correlations in the long-time limit.
This observation naturally leads us to studying the interplay between the mean-field approximation and the long-time relaxation: more precisely, our main result consists of the following {\it `cross error' estimate},
\begin{equation}\label{eq:cross-intro}
F_t^{N,m}- \mu_t^{\otimes m}- M^{N,m}+M^{\otimes m}\,=\,O(C^mN^{-1}) \times O(C^me^{-c_0t}),
\end{equation}
which seems to be a new type of result in the field.
This hybrid bound simultaneously recovers the optimal uniform-in-time mean-field estimate~\eqref{eq:prop_chaos_Langevin_W} and yields a uniform-in-$N$ version of the exponential relaxation bound~\eqref{eq:relax-old}.
It thus quantifies how fast the finite-$N$ system approaches equilibrium while remaining close to the main-field description.

\subsubsection{Overview of main results}
In this work, we revisit several classical questions regarding propagation of chaos, correlation estimates, and relaxation to Gibbs equilibrium. Our analysis culminates in the proof of the cross error estimate~\eqref{eq:cross-intro}; see Theorem~\ref{thm:main-Langevin} below. The proof combines a particularly detailed hierarchical analysis together with quantitative relaxation properties of the mean-field equation.
Along the way, we also obtain a number of additional results of interest, both for Gibbs relaxation and for propagation of chaos:
\begin{enumerate}[---]
\item \emph{Uniform-in-$N$ Gibbs relaxation:}
For the Langevin dynamics~\eqref{eq:Langevin}, if the coupling constant $\kappa$ is small enough, we establish the first Gibbs relaxation estimate in $L^2$ that is fully uniform in $N$, thereby improving upon~\eqref{eq:relax-old}. This could be deduced from the cross error bound~\eqref{eq:cross-intro}, but we rather derive it directly through a new hierarchical argument. To the best of our knowledge, this also provides the first exponential relaxation result beyond the case of a Lipschitz force kernel $K$.
\smallskip\item \emph{Time-accelerated propagation of chaos:}
For the corresponding translation-invariant Langevin dynamics on the torus, one has $M^{N,1} \equiv M$. The cross error estimate~\eqref{eq:cross-intro} then reduces to
\begin{align*}
F^{N,1}_t - \mu_t  = O(N^{-1} e^{-c_0 t}),
\end{align*}
which yields an improved, time-accelerated rate for propagation of chaos on long timescales. To our knowledge, such a refined rate appears here for the first time in the literature.
\smallskip\item \emph{Strongly coupled setting:}
For the {\it overdamped} Langevin dynamics on the torus, we identify several situations in which the smallness assumption on the coupling constant $\kappa$ can be partially removed; see Section~\ref{sec:extensions}. This includes, in particular, the case when~$K$ is translation-invariant and conservative. In that setting, a slightly more delicate variant of our argument yields the same cross error estimate up to an infinitesimal corrections in $N$,
\begin{equation*}
F_t^{N,m}- \mu_t^{\otimes m}- M^{N,m}+M^{\otimes m}\, =\, O(C_mN^{-1} e^{-c_0t}) + O(C_mN^{-\infty}).
\end{equation*} 
As a consequence, we deduce a partial Gibbs relaxation result,
\[F^{N,m}-M^{N,m}\,=\,O(C_me^{-c_0t})+O(C_mN^{-\infty}),\]
which ensures the expected exponential relaxation at least for $1\ll t\lesssim\log N$. Although not complete, this seems to be the only available relaxation result in this setting.
In addition, we obtain the corresponding accelerated propagation of chaos, cf.~Remark~\ref{rem:improved_chaos_kappa},
\[ F^{N,m} - \mu_t^{\otimes m} \,=\, O(C_mN^{-1} e^{-c_0 t}) + O(C_mN^{-\infty}).\]
\end{enumerate}
Finally, we note that the present analysis can also be extended to Kac's model. In a forthcoming work, we obtain analogous cross error estimates in that setting, together with a new uniform-in-$N$ exponential relaxation result.

\subsection{Main results}
Before turning to our main results on the cross error, we first revisit classical questions concerning propagation of chaos, correlation estimates, and relaxation to Gibbs equilibrium. For a sufficiently small coupling constant~$\kappa$, we show that the earlier results on these matters can be slightly strengthened through a direct analysis of the BBGKY hierarchy, combined with bootstrap arguments as in~\cite{Xie-24} and standard hypocoercivity estimates.
Throughout, we use weighted $L^2$ norms with equilibrium weight $\omega\simeq M^{-1}$,
\begin{align}
\label{eq:norm_hypoco}
	\|h\|_{L^2(\omega^{\otimes m})} := \Big( \int_{\Xd^m} \omega^{\otimes m} |h|^2 \Big)^{\frac12}, \qquad \omega(x,v) := \exp\big(\beta(\tfrac12|v|^2+A(x))\big).
\end{align}
More precisely, our first proposition establishes two key estimates: (i) a uniform-in-time bound on correlation functions, and (ii) a uniform-in-$N$ Gibbs relaxation result.
Item~(i) extends the recent work~\cite{Xie-24} on the overdamped dynamics to the present underdamped setting, the adaptation relying only on hypocoercivity arguments.
Item~(ii) improves on the earlier work on the topic~\cite{Bolley_2010,Monmarche_2017,Guillin_2021} for small~$\kappa$: it requires merely bounded interaction forces~$K$ and yields exponential relaxation in~$L^2$ uniformly in~$N$, thereby removing the $N$-dependent prefactor in~\eqref{eq:relax-old}. The proof, postponed to Section~\ref{sec:preliminary}, relies on a new, self-contained hierarchical argument which, to the best of our knowledge, constitutes the first hierarchical approach to Gibbs relaxation.
We note, however, that the aforementioned works~\cite{Bolley_2010,Monmarche_2017,Guillin_2021} have managed to significantly relax the smallness condition on $\kappa$ when $K$ is Lipschitz, a feature that currently seems out of reach for our PDE-based method. In the overdamped case, a partial extension beyond the weak-interaction regime will be discussed in Section~\ref{sec:extensions}.

\begin{prop}\label{prop:correlations_L2_Langevin}
Let $K \in L^{\infty}(\R^{2d})^d$, let $A$ satisfy~\eqref{eq:confinement-A}, and let $\mu_\circ \in \mathcal{P}(\Xd) \cap L^2(\omega)$. Consider the unique global weak solution $F^N\in C(\R_+; \mathcal{P}(\Xd^N) \cap L^2(\omega^{\otimes N}))$ of the Liouville equation~\eqref{eq:Liouville_Langevin} with tensorized data~\eqref{eq:chaos-in}, and the unique weak solution $\mu \in C(\R_+; \mathcal{P}(\Xd) \cap L^2(\omega))$ of~\eqref{eq:VFP}. There exist $c_0> 0$ (only depending on $d,\beta,K,A$) and $\kappa_0,C>0$ (further depending on $\|\mu_\circ\|_{L^2(\omega)}$) such that the following holds for any $\kappa \in [0, \kappa_0]$.
\begin{enumerate}[(i)]
\item \emph{Uniform-in-time correlation estimates:} for all $1 \le m \le N$ and $t \ge 0$,
\begin{equation}
\label{eq:correl_strong_Langevin}
\|G^{N,m}_t\|_{L^2(\omega^{\otimes m})} \le (Cm)^m N^{1-m}.
\end{equation}
In particular, this implies propagation of chaos: for all $1 \le m \le N$ and $t\ge0$,
\begin{equation}
\label{eq:prop-est-Langevin}
\|F^{N,m}_t - \mu_t^{\otimes m}\|_{L^2(\omega^{\otimes m})} \le C^m  N^{-1}.
\end{equation}
\item \emph{Gibbs relaxation:} for all $1 \le m \le N$ and $t \ge 0$, 
\begin{equation}
\label{eq:Gibbs_relax_strong_norms}
	\|F^{N,m}_t - M^{N,m}\|_{L^2(\omega^{\otimes m})} \le C^m e^{-c_0 t}. 
\end{equation}
\end{enumerate}
\end{prop}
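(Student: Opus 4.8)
The plan is to run the whole argument at the level of the BBGKY-type hierarchy for the correlation (cumulant) functions, producing the time decay from hypocoercivity of the underlying kinetic Fokker--Planck operators and handling the divergence structure of the interaction term by a short-time parabolic smoothing estimate --- this last point being essential since $K$ is only bounded and cannot be moved onto a derivative --- and then closing the resulting system of inequalities by a bootstrap on the hierarchy depth, as in~\cite{Xie-24}. The toolbox: for $\kappa\le\kappa_0$, the single-particle kinetic Fokker--Planck operator with the $\lambda$-convex confinement $A$ (and, in its linearized mean-field version, an $O(\kappa)$ lower-order perturbation) generates a semigroup $e^{tL}$ which, on $\{h:\int_\Xd h=0\}\subset L^2(\omega)$, satisfies both $\|e^{tL}h\|_{L^2(\omega)}\lesssim e^{-c_0t}\|h\|_{L^2(\omega)}$ and the gain of one velocity-derivative $\|e^{tL}\Div_v(\mathbf g)\|_{L^2(\omega)}\lesssim (\min(t,1))^{-1/2}e^{-c_0t}\|\mathbf g\|_{L^2(\omega)}$; these follow from the hypocoercivity machinery of~\cite{Villani_2009} together with the Bakry--\'Emery Poincar\'e inequality for $\omega^{-1}$. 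The same bounds hold for the $m$-particle operator $\sum_{i=1}^mL^{(i)}$ (plus the $O(\kappa)$ within-cluster interaction) on $\{h:\int_{\Xd^m}h=0\}\subset L^2(\omega^{\otimes m})$, crucially \emph{with constants uniform in $m$}: the $m$-particle spectral gap equals the single-particle one by tensorization of the Poincar\'e inequality, and the $m$ directions of regularization are controlled jointly by a single time-weighted energy functional.

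\textbf{Part (i): correlation hierarchy and the bootstrap.} From the cluster expansion $F^{N,m}=\sum_\sigma\prod_{B\in\sigma}G^{N,|B|}(z_B)$ over partitions $\sigma$ of $\{1,\dots,m\}$, the $G^{N,m}$ solve a hierarchy whose principal part is the tensorized linearized Vlasov--Fokker--Planck operator (hence hypocoercive by Step~1), whose forcing consists of ``creation'' terms --- products $G^{N,a}\otimes G^{N,b}$ with $a+b=m+1$ --- and whose coupling to the next level $G^{N,m+1}$ is weak in the sense relevant to the bootstrap, all remainders being $O(N^{-1})$. Starting from the vanishing data $G^{N,m}_\circ=0$ ($m\ge2$), one writes the Duhamel formula, uses the gain-of-derivative estimate to remove the $\nabla_v$ from the bounded kernel $K$, and reduces matters to a system of nonlinear inequalities for $\rho_m:=\sup_{t\ge0}\|G^{N,m}_t\|_{L^2(\omega^{\otimes m})}$; this is closed exactly as in~\cite{Xie-24} --- a first rough bound guaranteeing $\rho_m<\infty$ for all $m$, then an iteration absorbing the coupling to level $m+1$ and propagating the ansatz $\rho_m\le(Cm)^mN^{1-m}$ for $\kappa\le\kappa_0$, which is~\eqref{eq:correl_strong_Langevin}. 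The anchor $m=1$ is the uniform-in-time bound $\sup_t\|F^{N,1}_t\|_{L^2(\omega)}\le C$, obtained by applying Step~1 to the almost-closed equation for $F^{N,1}$ (the Vlasov--Fokker--Planck equation~\eqref{eq:VFP} up to an $O(N^{-1})$ term in $G^{N,2}$). Finally,~\eqref{eq:prop-est-Langevin} follows from $F^{N,m}-\mu^{\otimes m}=\big((F^{N,1})^{\otimes m}-\mu^{\otimes m}\big)+\sum_{\sigma\ne\{\{1\},\dots,\{m\}\}}\prod_BG^{N,|B|}$, telescoping the first term and bounding $\|F^{N,1}_t-\mu_t\|_{L^2(\omega)}\lesssim N^{-1}$ by Step~1 applied to the difference of the two Vlasov--Fokker--Planck-type equations, using $\sup_t\|G^{N,2}_t\|<\infty$.

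\textbf{Part (ii): Gibbs relaxation via the difference hierarchy.} Let $g^{N,m}$ be the equilibrium correlation functions of $M^N$; they solve the stationary version of the same hierarchy, hence (by the same bootstrap, or by letting $t\to\infty$ in part~(i)) satisfy $\|g^{N,m}\|_{L^2(\omega^{\otimes m})}\le(Cm)^mN^{1-m}$. Subtracting the stationary hierarchy from the dynamic one, the differences $\delta^{N,m}:=G^{N,m}_t-g^{N,m}$ --- which integrate to zero in each variable, hence lie in the decaying subspaces of Step~1 --- solve a linearized hierarchy with the same hypocoercive principal part, forcing given by a sum of products $\delta^{N,a}\otimes(\text{correlation})^{N,b}$ with $a+b=m+1$, and a weak coupling to $\delta^{N,m+1}$. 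One runs the bootstrap again, now carrying the weight $e^{c_0t}$: since the semigroup decays at a rate $\lambda_*>c_0$ one has $\int_0^t(\min(t-s,1))^{-1/2}e^{-\lambda_*(t-s)}e^{-c_0s}\,ds\lesssim e^{-c_0t}$, so the exponential weight survives all Duhamel integrals and products, giving $\|\delta^{N,m}_t\|_{L^2(\omega^{\otimes m})}\le C^me^{-c_0t}$. The anchor $m=1$ is $\|F^{N,1}_t-M^{N,1}\|_{L^2(\omega)}\le Ce^{-c_0t}$, obtained from the Vlasov--Fokker--Planck equation linearized at its stationary state $M^{N,1}$ (a small, still hypocoercive, perturbation for $\kappa\le\kappa_0$), the quadratic term being absorbed by the bootstrap and the $O(N^{-1})$ source $\propto G^{N,2}_t-g^{N,2}$ decaying along the hierarchy itself. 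Re-summing the cluster expansion of $F^{N,m}-M^{N,m}$ --- every term carrying at least one $\delta$-factor, the remaining factors being correlations of controlled size --- then yields~\eqref{eq:Gibbs_relax_strong_norms}.

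\textbf{Main obstacle.} The crux is to close the bootstrap \emph{uniformly in both $N$ and the hierarchy depth $m$ up to $m=N$}: the level-to-level coupling and the combinatorics of the creation terms naively generate $m$-dependent constants, so one must exploit the $N^{-1}$ improvement carried by the $(m+1)$-st cumulant in the $m$-th equation, a carefully tuned family of level weights (the factors $(Cm)^m$, resp.\ $C^m$), and the smallness of $\kappa$, so as to turn the hierarchy into a contraction in the appropriate Banach space of sequences; for part~(ii) one must in addition propagate the sharp exponential rate $c_0$ through the time-integrated, regularization-weighted Duhamel operator with no $N$- or $m$-dependent prefactor, and re-sum the cluster expansion without degrading that rate. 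This balancing of combinatorial growth against the $1/N$-gain and the smallness of $\kappa$ is the technical heart of the proof; the remaining ingredients --- moment bounds in the weighted spaces $L^2(\omega^{\otimes m})$, the derivation of the hierarchies, and the control of the various $O(N^{-1})$ remainders --- are routine.
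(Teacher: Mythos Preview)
Your overall architecture matches the paper's: BBGKY hierarchies for correlations, hypocoercivity of the linear Fokker--Planck operator $L_0$, and a bootstrap over the hierarchy depth for small $\kappa$. Two technical choices differ and are worth comparing.

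First, to absorb the $\Div_v$ in the interaction term (since $K\in L^\infty$ only), you invoke a pointwise short-time smoothing bound $\|e^{tL}\Div_v g\|\lesssim t^{-1/2}e^{-c_0t}\|g\|$. The paper instead uses an integrated duality estimate
\[
\Big\|\int_0^t e^{(t-s)L_0}\Div_v g(s)\,\ddr s\Big\|_{L^2(\omega)}^2\lesssim\int_0^t e^{-2c_0(t-s)}\|g(s)\|_{L^2(\omega)}^2\,\ddr s,
\]
obtained by testing against $h$ and using the dissipation bound $\int_0^t e^{2c_0s}\|(\nabla_v+\beta v)e^{sL_0^*}h\|^2\,\ddr s\lesssim\|h\|^2$. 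Both routes work, but the duality argument avoids hypoelliptic short-time machinery and tensorizes more transparently.

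Second, for part~(ii) you subtract the stationary correlation hierarchy and bootstrap on $G^{N,m}-g^{N,m}$, then re-sum the cluster expansion. The paper instead works directly with the \emph{marginal} hierarchy for $\pi_{\llbracket m\rrbracket}^\bot(F^{N,m}-M^{N,m})$, where $\pi_{\llbracket m\rrbracket}^\bot$ projects onto the zero-integral subspace in every variable, and recovers the full norm at the end from an orthogonal decomposition over subsets. Your route is also valid (the paper uses exactly the correlation-difference hierarchy later, for the sharper Proposition~3.1), but for~(ii) alone the marginal approach is shorter and needs no separate $m=1$ anchor.

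One point where your justification is too loose: you assert uniform-in-$m$ hypocoercive constants because ``the $m$-particle spectral gap equals the single-particle one by tensorization of the Poincar\'e inequality''. The decay \emph{rate} tensorizes, but the hypocoercive \emph{prefactor} in $\|e^{tL_0^{(m)}}h\|\le C_m e^{-mc_0t}\|h\|$ is a priori $C^m$, and this would ruin the bootstrap. The paper fixes this by passing to an equivalent norm in which $e^{tL_0^*}$ is a genuine contraction on $(\pi^\bot L^2(\omega))$ with constant $1$, then taking the injective cross norm on $L^2(\omega^{\otimes m})$; only then is the tensorized estimate uniform in $m$. Whatever mechanism you use (your ``single time-weighted energy functional'' may amount to the same), this is the place where a sketch becomes a proof, and you should make it explicit.
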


We now turn to the interplay between the mean-field approximation and the long-time relaxation, and give a precise statement of the announced cross error estimate~\eqref{eq:cross-intro}.

\begin{theor}[Cross mean-field/relaxation error]
\label{thm:main-Langevin}
Let $K \in L^{\infty}(\R^{2d})^d$, let $A$ satisfy~\eqref{eq:confinement-A}, and let $\mu_\circ\in\mathcal{P}(\Xd) \cap L^2(\omega)$. Consider the unique global weak solution $F^N\in C(\R_+; \mathcal{P}(\Xd^N) \cap L^2(\omega^{\otimes N}))$ of the Liouville equation~\eqref{eq:Liouville_Langevin} with tensorized data~\eqref{eq:chaos-in}, and the unique weak solution $\mu \in C(\R_+; \mathcal{P}(\Xd) \cap L^2(\omega))$ of~\eqref{eq:VFP}.
There exist $c_0> 0$ (only depending on $d,\beta,K,A$) and $\kappa_0,C>0$ (further depending on $\|\mu_\circ\|_{L^2(\omega)}$) such that given~$\kappa\in[0,\kappa_0]$ we have for all $1 \le m \le N$ and $t \ge 0$,
\begin{equation}\label{eq:cross-err}
\big\| F^{N,m}_t-\mu_t^{\otimes m}-M^{N,m}+M^{\otimes m} \big\|_{L^2(\omega^{\otimes m})}  \,\le\, C^mN^{-1}e^{-c_0t}.
\end{equation}
In addition, denoting by $C^{N,m}$ the $m$-particle correlation function associated with Gibbs equilibrium~$M^N$, we have for all $1\le m\le N$ and~$t\ge0$,
\begin{equation}\label{eq:cross-err-cor}
\|G_t^{N,m}-C^{N,m}\|_{L^2(\omega^{\otimes m})}\,\le\,(Cm)^mN^{1-m}e^{-c_0t}.
\end{equation}
\end{theor}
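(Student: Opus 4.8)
The plan is to set up a BBGKY-type hierarchy for the "cross" quantities and run the same bootstrap/hypocoercivity machinery used for Proposition~\ref{prop:correlations_L2_Langevin}, but now applied to the differences. Introduce the shifted correlation functions $D^{N,m}_t := G^{N,m}_t - C^{N,m}$ (the object controlled in \eqref{eq:cross-err-cor}). The starting point is that both $\{G^{N,m}_t\}_m$ and $\{C^{N,m}\}_m$ solve (respectively, the time-dependent and the stationary version of) the same correlation hierarchy; subtracting, the $D^{N,m}_t$ satisfy a linear hierarchy driven by the linearized mean-field operator on each level, with source terms that couple level $m$ to levels $m-1, m+1$ and are \emph{quadratic} in lower-order correlations. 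The crucial algebraic cancellation is that all the "large" $O(1)$ terms in the source (those involving the mean-field one-particle quantities alone) cancel between the $G$-hierarchy and the $C$-hierarchy, because $F^{N,1}_t \to \mu_t$ and $M^{N,1} \to M$ and the difference $F^{N,1}_t - M^{N,1} - \mu_t + M$ is itself one order smaller — so the effective source for $D^{N,m}$ is built out of products of the already-small quantities $G^{N,m'}_t$, $C^{N,m'}$, and the cross one-particle error. This is what produces the product bound $N^{1-m} e^{-c_0 t}$ rather than merely $N^{1-m}$ or merely $e^{-c_0 t}$.

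The key steps, in order. First, I would derive the evolution equation for $D^{N,m}_t$, carefully tracking that the initial data is $D^{N,m}_0 = G^{N,m}_0 - C^{N,m} = -C^{N,m}$ (since $F^N$ is tensorized at $t=0$), so the "initial cross error" is $O((Cm)^m N^{1-m})$ by Proposition~\ref{prop:correlations_L2_Langevin}(i) applied to the Gibbs correlations $C^{N,m}$ (which is the $t\to\infty$, time-independent instance of the same estimate). Second, establish the linearized evolution: $\partial_t D^{N,m} + \mathcal L^{(m)} D^{N,m} = S^{N,m}_t$, where $\mathcal L^{(m)}$ is the $m$-fold linearized mean-field operator (around $\mu_t$, or equivalently up to lower-order terms around $M$), for which uniform-in-$m$ hypocoercive decay at rate $c_0$ in the $L^2(\omega^{\otimes m})$ norm is available — this is exactly the ergodic input already used for Proposition~\ref{prop:correlations_L2_Langevin}. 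Third, estimate the source $S^{N,m}_t$: it should be bounded by $\|K\|_{L^\infty}$ times a sum of bilinear expressions, each a product of one factor from $\{\|G^{N,m'}_t\|, \|C^{N,m'}\|, \|F^{N,1}_t - M^{N,1} - \mu_t + M\|\}$ with $m' \le m+1$, and one factor from the corresponding list, plus genuinely lower-order terms $\|D^{N,m\pm1}_t\|$. Using the a priori bounds \eqref{eq:correl_strong_Langevin} and \eqref{eq:Gibbs_relax_strong_norms} — and, for the one-particle cross error, a preliminary direct estimate (the $m=1$ case of the theorem, obtained by comparing the Vlasov–Fokker–Planck flow to its equilibrium and the $N$-particle one-particle marginal to $M^{N,1}$, both of which relax at rate $c_0$, with the $N^{-1}$ gap coming from \eqref{eq:prop-est-Langevin}) — the source $S^{N,m}_t$ is bounded by $(Cm)^m N^{1-m} e^{-c_0 t}$ up to the unknowns $\|D^{N,m\pm1}_t\|$ themselves. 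Fourth, Duhamel against $\mathcal L^{(m)}$: $\|D^{N,m}_t\| \le e^{-c_0 t}\|D^{N,m}_0\| + \int_0^t e^{-c_0(t-s)} \|S^{N,m}_s\|\, ds$, which (since the source already carries $e^{-c_0 s}$) yields $\|D^{N,m}_t\| \lesssim (Cm)^m N^{1-m}(1+t) e^{-c_0 t}$ plus hierarchy-coupling terms. Fifth, absorb the $(1+t)$ by using a slightly smaller rate $c_0' < c_0$, and close the hierarchy coupling by a bootstrap/induction on $m$ exactly as in \cite{Xie-24}: assuming the bound at levels $m-1$ and (a weaker a priori bound at) $m+1$, the coupling terms are themselves $O((Cm)^m N^{1-m} e^{-c_0' t})$, so the constant $C$ can be chosen uniformly. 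This proves \eqref{eq:cross-err-cor}; then \eqref{eq:cross-err} follows from the standard combinatorial identity expressing $F^{N,m} - (F^{N,1})^{\otimes m}$ type differences in terms of the correlation functions, applied to both $F^N$ and $M^N$ and subtracted, together with the already-established one-particle cross error and the a priori smallness of all correlations.

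The main obstacle I expect is making the source estimate in Step~3 genuinely capture \emph{both} smallness factors simultaneously — i.e.\ verifying that every term in $S^{N,m}_t$ that is $O(e^{-c_0 t})$ is also $O(N^{1-m})$ and vice versa, with no leftover term that is large in one regime. This requires the precise algebraic form of the correlation hierarchy (the decomposition of $F^{N,m}$ into products of $G^{N,m'}$'s with the right combinatorial coefficients, postponed to Section~\ref{sec:preliminary}) and a careful bookkeeping of which products of lower-order correlations appear; in particular the terms mixing the one-particle cross error with an $(m-1)$-particle correlation, and the terms that are quadratic in $(m-1)/2$-type correlations, must all be checked to land at order $N^{1-m}$. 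A secondary technical point is the uniformity in $m$ of the hypocoercive constant $c_0$ for $\mathcal L^{(m)}$ and of the combinatorial prefactors, so that the induction on $m$ closes with the stated form $(Cm)^m N^{1-m} e^{-c_0 t}$; this is where the factor $(Cm)^m$ (rather than $C^m$) is unavoidable and must be tracked through the bootstrap, exactly paralleling Proposition~\ref{prop:correlations_L2_Langevin}(i).
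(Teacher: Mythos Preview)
Your overall strategy matches the paper's: subtract the stationary correlation hierarchy from the time-dependent one, use hypocoercivity of $L_0^{(m)}$ via Duhamel, and close by bootstrap with the a priori correlation bounds of Proposition~\ref{prop:correlations_L2_Langevin}(i) and its equilibrium analogue. The paper's proof of~\eqref{eq:cross-err-cor} (Proposition~\ref{prop:GGtildeCCtilde-Langevin}) is exactly this; your Steps~2, 4, 5 are accurate, including the bookkeeping of the $(Cm)^m$ prefactor. One small point: for $m\ge2$ the semigroup decays at rate $mc_0$ while the source decays at rate $c_0$, so no $(1+t)$ factor appears there; it only arises in the $m=1$ argument, where the paper indeed absorbs it by shrinking $c_0$.

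There is, however, a logical ordering issue in your plan that would cause trouble if followed literally. You claim the source $S^{N,m}$ for $D^{N,m}=G^{N,m}-C^{N,m}$ involves the one-particle \emph{cross} error $\|F^{N,1}-\mu-M^{N,1}+M\|$, and you propose to establish that cross error ``preliminarily'' by a direct comparison. But the correlation hierarchy involves only $G^{N,1}=F^{N,1}$ and $C^{N,1}=M^{N,1}$, never $\mu$ or $M$; subtracting gives $F^{N,1}-M^{N,1}$, the \emph{Gibbs relaxation} error, not the cross error. This is already $O(e^{-c_0t})$ by Proposition~\ref{prop:correlations_L2_Langevin}(ii), so the proof of~\eqref{eq:cross-err-cor} needs no information about the cross error at all. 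Conversely, your ``preliminary direct estimate'' for the $m=1$ cross error does not work as stated: a triangle inequality combining $\|F^{N,1}_t-M^{N,1}\|\lesssim e^{-c_0t}$ with $\|F^{N,1}_t-\mu_t\|\lesssim N^{-1}$ yields only $\min(e^{-c_0t},N^{-1})$, not the product. The $m=1$ cross error genuinely requires its own Duhamel argument (the paper's equation~\eqref{eq:cross-error}), which couples to the $m=2$ marginal cross error --- so it must come \emph{last}, after~\eqref{eq:cross-err-cor} and after the reduction of $m\ge2$ to $m=1$ (the paper's Proposition~\ref{prop:higher_order_marginals_strong}). Reorder accordingly: (a) prove~\eqref{eq:cross-err-cor} using only Gibbs relaxation for $m=1$ as input; (b) use cluster expansion to reduce~\eqref{eq:cross-err} for $m\ge2$ to $m=1$; (c) prove~\eqref{eq:cross-err} for $m=1$ by Duhamel, feeding in (b) at level $m=2$ and absorbing the self-term for small $\kappa$.
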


\begin{center}
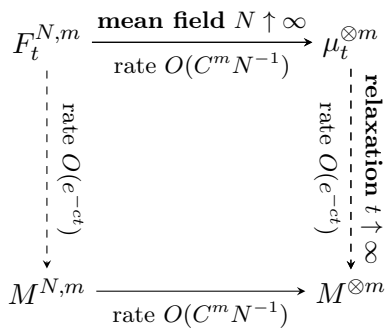
\begin{figure}
\begin{tikzpicture}[>=stealth, node distance=2.5cm, auto]
\node (A) at (0, 3.3) {$F^{N,m}_t$};
\node (B) at (4, 3.3) {$\mu_t^{\otimes m}$};
\node (C) at (0, 0) {$M^{N,m}$};
\node (D) at (4, 0) {$M^{\otimes m}$};
\path[->] (A) edge node[midway, below] {{\footnotesize rate $O(C^mN^{-1})$}} (B);
\path[->] (A) edge node[midway, above] {{\footnotesize {\bf mean field} $N\uparrow\infty$}} (B);
\path[->, dashed] (A) edge node[sloped, anchor=center, above] {{\footnotesize rate $O(e^{-ct})$}} (C)  ;
\path[->, dashed] (B) edge node[sloped, anchor=center, above]{{\footnotesize {\bf relaxation} $t\uparrow\infty$}} (D);
\path[->, dashed] (B) edge node[sloped, anchor=center, below]{{\footnotesize rate $O(e^{-ct})$}} (D);
\path[->] (C) edge node[midway, below]{{\footnotesize rate $O(C^mN^{-1})$}} (D);
\end{tikzpicture}
\caption{This diagram displays the estimates for mean-field and relaxation errors in $L^2(\omega^{\otimes m})$ norms. Our main result in this work is that the estimates for mean-field and relaxation errors further hold relatively to one another in form of a cross error estimate, cf.~\eqref{eq:cross-err}.
}\label{fig1}
\end{figure}
\end{center}

The above result is easily adapted to the corresponding particle system on the torus~$\T^d$, then choosing for instance $A\equiv0$.
In this case, the existence and uniqueness of the $N$-particle steady state~$M^N$ follows again from~\cite{Stroock1997}.
If the force kernel derives from a potential, $K(x,y)=-\nabla W(x-y)$,
then we find by symmetry that the first marginal~$M^{N,1}$ and the mean-field equilibrium $M$ both coincide with Maxwellian equilibrium,
\[M^{N,1}(x,v)\,=\,M(x,v) \,=\, Ce^{-\frac12\beta |v|^2}, \qquad (x,v) \in \T^d \times \R^d.\] 
In this translation-invariant setting, for $m=1$, the cross estimates of Theorem~\ref{thm:main-Langevin} thus lead to the following striking improvement of the usual uniform-in-time mean-field estimate~\eqref{eq:prop-est-Langevin}.

\begin{cor}[Accelerated propagation of chaos for translation-invariant systems]\label{cor:MFTorus-Langevin}
Consider the above translation-invariant setting with $\Xd = \T^d \times \R^d$, $A\equiv0$, $K(x,y) = -\nabla W(x-y)$ for some $W\in W^{1,\infty}(\T^{d})$, and let now $\omega(x,v)=\exp({\frac12\beta|v|^2})$. Given $\mu_\circ \in \Pc(\Xd) \cap L^2(\omega)$,
consider the unique global weak solution $F^N\in C(\R_+;\Pc(\Xd^N)\cap L^2(\omega^{\otimes N}))$ of the corresponding Liouville equation~\eqref{eq:Liouville_Langevin} on $\Xd^N$ with tensorized data~\eqref{eq:chaos-in}, and the unique weak solution $\mu \in C(\R_+; \mathcal{P}(\Xd) \cap L^2(\omega))$ of the corresponding mean-field equation~\eqref{eq:VFP} on $\Xd$. There exist $c_0 > 0$ (only depending on $d, \beta, K$) and $\kappa_0, C >0$ (further depending on $\|\mu_\circ\|_{L^2(\omega)}$) such that for any~$\kappa \in [0,\kappa_{0}]$ we have for all $t \ge 0$
\[\|F_t^{N,1}-\mu_t\|_{L^2(\omega)}\,\le\,C N^{-1} e^{-c_0 t}.\]
\end{cor}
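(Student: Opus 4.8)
The plan is to deduce this directly from the cross estimate~\eqref{eq:cross-err} of Theorem~\ref{thm:main-Langevin} in its periodic version, the crucial point being that in the translation-invariant setting the first marginal of the $N$-particle Gibbs state coincides \emph{exactly} --- not merely up to $O(N^{-1})$ --- with the mean-field equilibrium, so that the cross estimate collapses to an estimate on the mean-field error itself.

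First I would record the symmetry reduction. On $\Xd=\T^d\times\R^d$ with $A\equiv0$ and $K(x,y)=-\nabla W(x-y)$, the Gibbs state~\eqref{eq:Gibbs_eq} factorizes as
\[M^N(z_1,\dots,z_N)=(Z^N)^{-1}\Big(\prod_{i=1}^N e^{-\frac12\beta|v_i|^2}\Big)e^{-\frac{\beta\kappa}{2N}\sum_{i,j=1}^N W(x_i-x_j)};\]
the velocities decouple and the law of the positions is invariant under the diagonal torus shift $(x_1,\dots,x_N)\mapsto(x_1+a,\dots,x_N+a)$, so its first marginal is uniform on $\T^d$ and $M^{N,1}(x,v)=Ce^{-\frac12\beta|v|^2}$. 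The same invariance shows that this Maxwellian solves the fixed-point equation~\eqref{eq:def-M-eq}, since then $W\ast M$ is constant, so by uniqueness of its solution for $\kappa$ small one gets $M=M^{N,1}$. Hence $M^{N,1}-M^{\otimes1}=0$, and with the weight $\omega(x,v)=\exp(\frac12\beta|v|^2)$ of the statement,
\[\|F^{N,1}_t-\mu_t\|_{L^2(\omega)}=\big\|F^{N,1}_t-\mu_t^{\otimes1}-M^{N,1}+M^{\otimes1}\big\|_{L^2(\omega)}.\]

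It then remains to invoke~\eqref{eq:cross-err} with $m=1$, which bounds the right-hand side by $CN^{-1}e^{-c_0t}$ and closes the argument. For this one must check that Theorem~\ref{thm:main-Langevin} --- and with it Proposition~\ref{prop:correlations_L2_Langevin} and the hierarchical analysis of Section~\ref{sec:preliminary} --- transposes to the periodic setting: the compactness of $\T^d$ takes over the role of the confinement~\eqref{eq:confinement-A} in supplying hypocoercive decay for the linearized mean-field operator and the uniform-in-$N$ spectral gap entering the BBGKY estimates, the force $K=-\nabla W$ is bounded since $W\in W^{1,\infty}(\T^d)$, and $\omega$ is now just the Gaussian velocity weight. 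I do not expect a genuine obstacle here: hypocoercivity for the kinetic Fokker-Planck dynamics on the torus, uniform log-Sobolev and Poincar\'e inequalities, and the ergodic bounds of the periodic analogue of Lemma~\ref{lem:ergodic-strong2} are all classical, and the $N$-independence of the constants is preserved throughout. The real content of the corollary --- and the only delicate point --- is the exact degeneracy $M^{N,1}=M$ established in the first step, which is precisely what upgrades the uniform-in-time bound $O(N^{-1})$ of~\eqref{eq:prop-est-Langevin} to the relative rate $O(N^{-1}e^{-c_0t})$; an estimate valid only asymptotically for the mean-field error, or an only-approximate identity $M^{N,1}\approx M$, would not suffice.
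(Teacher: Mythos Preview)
Your proposal is correct and matches the paper's own proof: both observe that the arguments behind Theorem~\ref{thm:main-Langevin} carry over to the torus (the paper cites~\cite{Bouin_2020} for the required hypocoercivity of $L_0$ on $\T^d$), and then note that the explicit Gibbs form~\eqref{eq:Gibbs_eq} together with translation invariance forces $M^{N,1}=M$, so the cross estimate~\eqref{eq:cross-err} with $m=1$ collapses to the claimed bound on $F^{N,1}_t-\mu_t$.
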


\subsection{Extension~1: overdamped Langevin dynamics}\label{sec:ext-overdamped}

The above results are easily adapted to the overdamped Langevin dynamics, that is, to the following system of coupled SDEs: for $1\le i\le N$,
\begin{equation}\label{eq:overdamped}
\left\{\begin{array}{l}
\ddr X^{i,N}_t=\tfrac\kappa N\sum_{j=1}^NK(X^{i,N}_t,X_t^{j,N})\,\ddr t-\nabla A(X_t^{i,N})\ddr t +\sqrt2\ddr B_t^{i}, \qquad t \ge 0, \\[2mm]
X^{i,N}_t|_{t=0} = X^{i,N}_\circ,
\end{array}\right.
\end{equation}
where $(X^{i,N})_{1 \le i \le N}$ is now the set of particle positions in $\R^d$,
where~$\{B^i\}_i$ are independent and identically distributed $d$-dimensional Brownian motions,
where $K,A$ are as in~\eqref{eq:KLinfty}--\eqref{eq:confinement-A}, and where as before we shall take $\kappa>0$ to be small enough (independently of~$N$).
In terms of a probability density~$F^N$ on the $N$-particle space~$(\R^d)^N$, the corresponding Liouville equation reads
\begin{equation}\label{eq:Liouville_overdamped}
\partial_tF^N = \sum_{i=1}^N\Div_{x_i} \big((\nabla_{x_i}+\nabla A(x_i))F^N\big) - \kappa \sum_{i=1}^N \Div_{x_i} \bigg(\frac{1}{N} \sum_{j=1}^N K(x_i,x_j)F^N\bigg),
\end{equation}
and we assume that the distribution is initially tensorized, for some $\mu_\circ\in\Pc(\R^d)$,
\begin{equation}\label{eq:init_tensorization_overdamped}
F_t^N|_{t=0}=\mu_\circ^{\otimes N}.
\end{equation}
In this case, the propagation of chaos implies the mean-field limit
\begin{align}\label{eq:cvg_overdamped}
F^{N,m}_t \to\mu_t^{\otimes m}, \qquad \text{as $N \uparrow \infty$},
\end{align}
where $\mu_t$ is now the solution of the McKean-Vlasov equation
\begin{equation}\label{eq:MKV}
\left\{\begin{array}{l}
\partial_t \mu = \Div((\nabla + \nabla A) \mu) - \kappa \Div_x( (K \ast\mu) \mu),\qquad t\ge0,\\
\mu|_{t=0}=\mu_\circ.
\end{array}\right.
\end{equation}
Our analysis is easily repeated in this setting, up to adapting the notation accordingly: in particular, we now set
\begin{equation}\label{eq:notation-overdamped}
\Xd=\R^d,\qquad\|h\|_{L^2(\omega^{\otimes m})}=\Big(\int_{(\R^d)^m}\omega^{\otimes m}|h|^2\Big)^\frac12,\qquad\omega(x)=\exp(A(x)).
\end{equation}

Quantitative and uniform-in-time estimates for the mean-field approximation~\eqref{eq:cvg_overdamped} have been obtained under various conditions, see e.g.~\cite{Malrieu-03,Cattiaux-Guillin-Malrieu-08,DEGZ-20,Delarue_Tse_21,Guillin_2021b,BJS-22,Lacker-LeFlem-23,Rosenzweig-Serfaty-21,Rosenzweig-Serfaty-23}. For~$K$ bounded and $\kappa$ small enough, optimal uniform-in-time $L^2$ estimates on higher-order correlations $\{G^{N,m}\}_{2\le m\le N}$ were further obtained in~\cite{Xie-24}, establishing Proposition~\ref{prop:correlations_L2_Langevin}(i) in this setting.

For the mean-field equation~\eqref{eq:MKV}, uniqueness of the steady state $M$ follows from standard parabolic theory for $K$ bounded and $\kappa$ small enough, and the convergence is exponential; see e.g.~Lemma~\ref{lem:ergodic-strong} below. At the level of the $N$-particle system, the existence and uniqueness of the steady state~$M^N$ for the Liouville equation~\eqref{eq:Liouville_overdamped} follows e.g.\@ from~\cite[Section 3.3]{Kulik_2017}.
For $K$ Lipschitz and $\kappa$ small enough, as in the underdamped setting~\eqref{eq:gibbs-relax}, the coupling argument in~\cite{Bolley_2010} ensures the uniform-in-$N$ exponential relaxation of the Liouville solution $F^N$ in $2$-Wasserstein distance.
Convergence in relative entropy follows from the uniform-in-$N$ log-Sobolev inequality in~\cite{Otto_2007}.
More recently, in~\cite{Guillin_Liu_2022}, the smallness condition on~$\kappa$ was substantially weakened and exponential convergence in $L^2$ was deduced from a uniform-in-$N$ Poincar\'e inequality. However, as in the underdamped setting~\eqref{eq:relax-old}, estimates in relative entropy and in $L^2$ were obtained only up to $N$-dependent prefactors and they all required~$K$ to be Lipschitz.
For $\kappa$ small enough, the proof of Proposition~\ref{prop:correlations_L2_Langevin}(ii) is easily adapted to the present overdamped setting and thus again yields a partial improvement of previous work.

We turn to our main result on the cross error: it can also be adapted directly and takes on the following guise.

\begin{theor}[Cross mean-field/relaxation error]
\label{thm:main-overdamped}
Let $K\in L^\infty(\R^{2d})^d$, let $A$ satisfy~\eqref{eq:confinement-A}, and let $ \mu_\circ \in \Pc(\R^d)\cap L^2( \omega)$. Consider the unique global weak solution $ F^N\in C(\R_+;\Pc(\R^{dN})\cap\Ld^2( \omega^{\otimes N}))$ of the Liouville equation~\eqref{eq:Liouville_overdamped} with tensorized data~\eqref{eq:init_tensorization_overdamped}.
There exist $c_0> 0$ (only depending on $d,K,A$) and $\kappa_0,C>0$ (further depending on~$\| \mu_\circ\|_{L^2( \omega)}$) such that given~$\kappa\in[0,\kappa_0]$ we have for all $1 \le m \le N$ and $t \ge 0$,
\[ \big\|  F^{N,m}_t-  \mu_t^{\otimes m}- M^{N,m}+  M^{\otimes m} \big\|_{L^2( \omega^{\otimes m})}  \,\le\, C^mN^{-1}e^{-c_0t}.\]
\end{theor}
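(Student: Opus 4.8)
The plan is to repeat verbatim the hierarchical analysis leading to Theorem~\ref{thm:main-Langevin}, with the underdamped phase space $\Xd=\R^d\times\R^d$ and the hypocoercive $L^2(\omega^{\otimes m})$ framework replaced by the overdamped state space $\Xd=\R^d$ with the genuinely coercive weight $\omega(x)=e^{A(x)}$ from~\eqref{eq:notation-overdamped}. The starting point is the BBGKY hierarchy for the correlation functions $\{G^{N,m}\}_{2\le m\le N}$ attached to the Liouville equation~\eqref{eq:Liouville_overdamped}: each $G^{N,m}$ satisfies an equation of the form $\partial_t G^{N,m}=\mathcal L_m^{N}G^{N,m}+(\text{source terms built from }G^{N,k},\,k<m\text{ and }k=m+1)$, where $\mathcal L_m^{N}$ is (a small perturbation of) the linearized mean-field operator tensorized over $m$ particles. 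Since $A$ is uniformly convex, the free operator $\Div((\nabla+\nabla A)\cdot)$ is self-adjoint and has a spectral gap on $L^2(\omega)$; this replaces the hypocoercivity estimates used in the kinetic case and makes the $L^2(\omega^{\otimes m})$ energy estimates \emph{more}, not less, elementary. For $\kappa\le\kappa_0$ small the perturbation coming from $K$ does not destroy the gap, so $e^{t\mathcal L_m^N}$ decays like $e^{-c_0 t}$ uniformly in $m\le N$ (with a loss at most polynomial in $m$ absorbed by the $C^m$ prefactor).

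The substantive step is the same two-tier argument as in Section~\ref{sec:preliminary}. First one proves the uniform-in-time correlation bounds $\|G^{N,m}_t\|_{L^2(\omega^{\otimes m})}\le (Cm)^mN^{1-m}$, which is exactly Proposition~\ref{prop:correlations_L2_Langevin}(i) in the overdamped setting (already established in~\cite{Xie-24}), via a Duhamel/bootstrap scheme on the hierarchy: the source terms in the $G^{N,m}$-equation carry an explicit factor $N^{-1}$ relative to the expected size of $G^{N,m-1}$, and the coupling to $G^{N,m+1}$ is handled by the Xie-type bootstrap that closes the hierarchy at all orders simultaneously. Second — and this is the new input — one introduces the shifted quantities $\tilde G^{N,m}_t:=G^{N,m}_t-C^{N,m}$ and $\tilde F^{N,m}_t:=F^{N,m}_t-\mu_t^{\otimes m}-M^{N,m}+M^{\otimes m}$, and observes that, since $\{C^{N,m}\}$ are exactly the stationary correlations (so $\mathcal L_m^N C^{N,m}+(\text{source in }C^{N,k})=0$), the differences $\tilde G^{N,m}$ solve the \emph{same} hierarchy but now with source terms that are themselves differences $G^{N,k}_t-C^{N,k}$, i.e.\ quantities that (a) carry the $N^{1-k}$ smallness from the correlation estimates above and (b) decay exponentially by Proposition~\ref{prop:correlations_L2_Langevin}(ii) applied to both $F^N$ and (trivially) $M^N$. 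Running the Duhamel/bootstrap argument once more on this shifted hierarchy — using $\sup_s e^{c_0 s}\|\tilde G^{N,k}_s\|$ as the bootstrap norms — propagates \emph{both} the $N^{-1}$ gain and the $e^{-c_0 t}$ decay to all orders, giving~\eqref{eq:cross-err-cor}, and then $\tilde F^{N,m}$ is reconstructed from $\{\tilde G^{N,k}\}_{k\le m}$ by the standard cluster-expansion (inclusion–exclusion) formula, yielding the stated bound.

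The only genuine obstacles are bookkeeping ones, identical to those already resolved in the kinetic proof: first, one must check that the stationary correlations $C^{N,m}$ exist with the right size, $\|C^{N,m}\|_{L^2(\omega^{\otimes m})}\le(Cm)^mN^{1-m}$, which follows by applying the uniform-in-time correlation estimate~\eqref{eq:correl_strong_Langevin} to the stationary solution $F^N_t\equiv M^N$ (so this is automatic); second, one must ensure that the combinatorial prefactors generated by the repeated bootstrap stay of the form $(Cm)^m$ and in particular that the $m\to m+1$ coupling does not blow them up — this is exactly where the Xie bootstrap, rather than a naive Grönwall on each level, is needed. Because the overdamped generator is self-adjoint and coercive on $L^2(\omega)$, every place where the underdamped proof invoked a hypocoercive functional can be replaced by a one-line spectral-gap estimate, so no new difficulty arises; the proof is therefore a direct transcription and we only indicate the modifications.
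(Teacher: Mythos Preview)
Your strategy is essentially the paper's own: transcribe the underdamped argument of Section~\ref{sec:thm-Langevin}, replacing hypocoercivity by the spectral gap for $\Div((\nabla+\nabla A)\,\cdot\,)$ on $L^2(\omega)$ (Lemma~\ref{lem:ergodic-strong}). One point your sketch glosses over, however: the cluster-expansion reconstruction of $\tilde F^{N,m}$ from $\{\tilde G^{N,k}\}_{k\le m}$ does \emph{not} by itself yield the stated bound. The contribution of the finest partition is $(F^{N,1})^{\otimes m}-\mu^{\otimes m}-(M^{N,1})^{\otimes m}+M^{\otimes m}$, which reduces (up to a $C^mN^{-1}e^{-c_0t}$ cross term) to $C^m\|\tilde F^{N,1}\|_{L^2(\omega)}$; but $\tilde F^{N,1}=F^{N,1}-\mu-M^{N,1}+M$ is \emph{not} $\tilde G^{N,1}=F^{N,1}-M^{N,1}$, and the latter carries only $e^{-c_0t}$ from Gibbs relaxation, not the required $N^{-1}e^{-c_0t}$. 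Closing the $m=1$ cross error therefore requires a separate Duhamel-plus-absorption step on the equation for $\tilde F^{N,1}$ itself (this is exactly the argument after Proposition~\ref{prop:higher_order_marginals_strong} in Section~\ref{sec:thm-Langevin}): the source involves the $m=2$ cross error, which the cluster expansion bounds by $CN^{-1}e^{-c_0t}+C\|\tilde F^{N,1}\|$, and the last term is absorbed for small $\kappa$. Since this step is part of the underdamped proof you propose to repeat verbatim, the omission is minor, but without it the reconstruction argument as you describe it does not close.

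A smaller point: the equilibrium correlation bounds $\|C^{N,m}\|\le(Cm)^mN^{1-m}$ do not follow by ``applying the uniform-in-time estimate to the stationary solution $F^N_t\equiv M^N$'', because Proposition~\ref{prop:correlations_L2_Langevin}(i) assumes tensorized initial data and $M^N$ is not tensorized. The paper instead obtains them as the $t\uparrow\infty$ limit of the time-dependent bounds, using Gibbs relaxation (Corollary~\ref{cor:correl-Langevin}).
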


As before, this result holds unchanged for the corresponding system of particles on the torus~$\T^d$ with $A \equiv 0$.  If the force kernel is translation invariant, then we find by symmetry $M^{N,1}=M$, leading to the following strong improvement of the usual uniform-in-time mean-field estimate~\eqref{eq:prop-est-Langevin}.

\begin{cor}[Accelerated propagation of chaos for translation-invariant systems]\label{cor:MFTorus-Xie}
Let $\Xd=\T^d$, $A\equiv0$, and $K(x,y) = K_0(x-y)$ for some $K_0\in L^\infty(\T^{d})^d$. Given $\mu_\circ \in \Pc\cap L^2(\T^d)$, consider the unique global weak solution $F^N\in C(\R_+;\Pc\cap L^2(\T^{dN}))$ of the corresponding Liouville equation~\eqref{eq:Liouville_overdamped} on $\T^{dN}$ with tensorized data~\eqref{eq:init_tensorization_overdamped}, and the unique weak solution $\mu\in C(\R_+;\Pc\cap\Ld^2(\T^d))$ of the corresponding mean-field equation~\eqref{eq:MKV} on $\T^d$.
Then there exist $c_0> 0$ (only depending on $d,K$) and $\kappa_0,C>0$ (further depending on $\|\mu_\circ\|_{L^2(\T^d)}$) such that given~$\kappa\in[0,\kappa_{0}]$ we have for all $t \ge 0$,
\[\|F_t^{N,1}-\mu_t\|_{L^2(\T^d)}\,\le\,CN^{-1}e^{-c_0t}.\]
\end{cor}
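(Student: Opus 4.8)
The plan is to obtain Corollary~\ref{cor:MFTorus-Xie} as a direct specialization of (the torus version of) Theorem~\ref{thm:main-overdamped} to $m=1$, the only preliminary point being to identify the one-particle equilibria of the $N$-particle system and of the mean-field dynamics and to check that they coincide. First I would show that the mean-field steady state is the uniform density $M\equiv1$: on $\T^d$ with $A\equiv0$, periodicity gives, for any $K_0\in L^\infty(\T^d)^d$,
\[(K\ast1)(x)=\int_{\T^d}K_0(x-y)\,\ddr y=\int_{\T^d}K_0,\]
a constant vector, so $\Div_x\big((K\ast1)\,1\big)=0=\Div(\nabla1)$ and the right-hand side of~\eqref{eq:MKV} vanishes for $\mu\equiv1$; since $\kappa$ is small, uniqueness of the mean-field steady state (Lemma~\ref{lem:ergodic-strong}) gives $M\equiv1$.

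Next I would show that the one-particle marginal of the $N$-particle steady state is also uniform. Since $A\equiv0$ and $K(x,y)=K_0(x-y)$, the Fokker--Planck operator in~\eqref{eq:Liouville_overdamped} commutes with the diagonal translations $(x_1,\dots,x_N)\mapsto(x_1+a,\dots,x_N+a)$, $a\in\T^d$; hence $(x_1,\dots,x_N)\mapsto M^N(x_1+a,\dots,x_N+a)$ is again a stationary probability density, and uniqueness of the $N$-particle steady state (from~\cite[Section~3.3]{Kulik_2017}) forces $M^N(\,\cdot+a,\dots,\cdot+a)=M^N$ for all $a$. Integrating out the last $N-1$ variables and changing variables $x_j\mapsto x_j+a$ for $2\le j\le N$ then yields $M^{N,1}(x_1+a)=M^{N,1}(x_1)$ for every $a$, so $M^{N,1}$ is constant on $\T^d$, i.e.\ $M^{N,1}\equiv1$. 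Combined with the previous step, $M^{N,1}=M$.

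It then remains to apply Theorem~\ref{thm:main-overdamped}. Since $A\equiv0$ the weight is $\omega\equiv1$ and $L^2(\omega)=L^2(\T^d)$, so the theorem (in its torus version) with $m=1$ gives
\[\big\|F_t^{N,1}-\mu_t-M^{N,1}+M\big\|_{L^2(\T^d)}\le CN^{-1}e^{-c_0t},\]
and inserting $M^{N,1}=M$ yields the claim. There is no genuine obstacle beyond Theorem~\ref{thm:main-overdamped} itself, which I assume; the one point requiring care is that the torus setting ($A\equiv0$) does not literally meet the convexity hypothesis~\eqref{eq:confinement-A}, so one must check that the hierarchical proof holds unchanged on $\T^d$ — which it does, translation invariance on the compact state space providing the required ergodicity in place of confinement. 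Note finally that, unlike in the underdamped Corollary~\ref{cor:MFTorus-Langevin}, no gradient structure $K_0=-\nabla W$ is needed here, because the identification $M^{N,1}=M\equiv1$ rests only on translation invariance, not on reversibility.
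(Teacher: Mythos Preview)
Your proposal is correct and follows essentially the same approach as the paper: apply the torus version of Theorem~\ref{thm:main-overdamped} with $m=1$ after checking that $M^{N,1}=M\equiv1$. The only minor variation is in how you show $M^{N,1}\equiv1$ --- you argue directly via diagonal translation invariance of $M^N$ and uniqueness of the steady state, whereas the paper instead takes the first marginal of the stationary Liouville equation, uses $M^{N,2}(x,y)=M^{N,2}(0,y-x)$ to reduce it to $\triangle M^{N,1}=0$, and concludes; both arguments are valid and essentially equivalent.
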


\subsection{Extension~2: beyond the weak interaction regime}\label{sec:extensions}

The proof of Theorem~\ref{thm:main-Langevin} proceeds by a detailed analysis of the BBGKY hierarchy and relies on two ingredients: ergodic estimates for mean-field operators and uniform-in-time correlation estimates. Our approach also uses multiple bootstrap arguments for small~$\kappa$, but this can somehow be avoided: rather using direct inductions, the same results could be obtained up to~$O(N^{-\infty})$ errors.
In our recent work~\cite{BD_2024}, building on the master equation formalism from~\cite{Delarue_Tse_21}, we actually showed that, in negative Sobolev norms rather than in $L^2$, uniform-in-time correlation estimates can themselves be deduced from suitable ergodic estimates for mean-field operators (provided the interaction kernel is smooth). More precisely, we need the unique ergodicity of the mean-field dynamics and the relaxation of the linearized mean-field evolution (linearized at the mean-field solution). In weak norms and up to $O(N^{-\infty})$ errors, we therefore expect to adapt all our arguments, without any smallness condition on~$\kappa$, whenever such ergodic estimates are available.
In the torus $\T^d$ with~$A\equiv0$, taking advantage of the rich literature on the McKean-Vlasov equation~\eqref{eq:MKV}, this allows to cover the overdamped dynamics with any $\kappa\ge0$ in the following two cases:
\begin{enumerate}[---]
\item \emph{Case~1:} $K$ is translation-invariant and conservative; more precisely, $K(x,y)=K_0(x-y)$ for some~$K_0\in C^\infty(\T^d)^d$ that is odd and divergence-free.
\smallskip\item \emph{Case~2:} $K$ derives from a smooth, coordinate-wise even, positive-definite potential $W$ (so-called $H$-stable potential); more precisely, $K(x,y)=-\nabla W(x-y)$ for some $W\in C^\infty(\T^d)$ that is even in each coordinate and has Fourier coefficients $\hat W(k)\ge0$ for all $k\in\Z^d$.\footnote{In fact, this non-negativity condition can even be relaxed to $1 + 2\kappa\inf_{k\in\Z^d}\hat W(k) >0$.} Note that this is the only framework in this work for which a distinction can be drawn between attractive and repulsive interactions; see also example~\eqref{ex:Coulomb} in Section~\ref{sec:exs} below.
\end{enumerate}
In both cases, it can be shown that the mean-field steady state is unique and coincides with the Lebesgue measure for any $\kappa\ge0$, and ergodic properties of linearized mean field were established in~\cite[Sections~3.6.2--3.6.3]{Delarue_Tse_21} (see in particular~\cite{Carrillo_2023} for Case~2).
A suitable adaptation of our analysis then leads to the following cross error estimates in weak norms up to $O(N^{-\infty})$. This implies in particular a uniform-in-$N$ Gibbs relaxation result up to $O(N^{-\infty})$, cf.~\eqref{eq:relax-weak} below, which is, to our knowledge, the first such result for those systems.
The proof is postponed to Section~\ref{sec:extension-app}.

\begin{theor}\label{th:non-pert}
Consider Cases~1 and~2 above on the torus $\T^d$ with $A\equiv0$.
Given $\mu_\circ\in\Pc\cap C^\infty(\T^d)$,
consider the unique global weak solution $F^N\in C(\R_+;\Pc\cap C^\infty(\T^{dN}))$ of the corresponding Liouville equation~\eqref{eq:Liouville_overdamped} on $\T^{dN}$ with tensorized data~\eqref{eq:init_tensorization_overdamped}, and the unique weak solution $\mu\in C(\R_+;\Pc\cap C^\infty(\T^d))$ of the corresponding mean-field equation~\eqref{eq:MKV} on $\T^d$.
Then, in both Cases~1 and~2, for any~$\kappa\ge0$, we have for all $1\le m\le N$ and $t,\theta\ge0$,
\begin{equation}\label{eq:cross-weakre}
\big\|F_t^{N,m}-\mu_t^{\otimes m}-M^{N,m}+M^{\otimes m}\big\|_{W^{-\ell_{m,\theta},1}(\T^{dm})}\,\le\,C_{m,\theta}\big(N^{-1}e^{-c_0t}+N^{-\theta}\big),
\end{equation}
for some $c_0>0$ only depending on $d,K$, some regularity exponent $\ell_{m,\theta}$ only depending on $m,\theta$, and some constant $C_{m,\theta}$ further depending on $d,K,\mu_\circ$.
In particular, this implies the following Gibbs relaxation result: for all $1\le m\le N$ and $t,\theta\ge0$,
\begin{equation}\label{eq:relax-weak}
\|F_t^{N,m}- M^{N,m}\|_{W^{-\ell_{m,\theta},1}(\T^{dm})}\,\le\, C_{m,\theta}\big(e^{-c_0t}+N^{-\theta}\big).
\end{equation}
\end{theor}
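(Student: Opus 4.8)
The plan is to transpose the BBGKY-hierarchy analysis behind Theorem~\ref{thm:main-Langevin} to negative Sobolev norms, replacing the perturbative small-$\kappa$ hypocoercivity estimates by the non-perturbative ergodic estimates for linearized mean field of~\cite[Sections~3.6.2--3.6.3]{Delarue_Tse_21} (see also~\cite{Carrillo_2023} for Case~2), and tolerating an $O(N^{-\infty})$ error from truncating the hierarchy. Throughout, estimates are in weak norms $W^{-\ell,1}$ with an exponent $\ell$ that is raised a bounded number of times, the consumed regularity being supplied by $\mu_\circ,K\in C^\infty$. A first reduction uses the inclusion--exclusion (cluster) expansion of the marginals: $F^{N,m}_t$ and $M^{N,m}$ each expand into sums of products of a one-particle density and higher correlations, and since on the torus $M^{N,1}=M$ is the Lebesgue measure, the two leading single-particle clusters combine in $F^{N,m}_t-\mu_t^{\otimes m}-M^{N,m}+M^{\otimes m}$ into $(F^{N,1}_t)^{\otimes m}-\mu_t^{\otimes m}$, which telescopes to $\lesssim_m\|F^{N,1}_t-\mu_t\|$ times bounded factors, while the remaining clusters — paired by shape — telescope into terms each carrying a factor $G^{N,k}_t-C^{N,k}$ for some $k\ge2$, or a factor $F^{N,1}_t-M$ multiplied by a correlation of size $O(N^{-1})$. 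Writing $E^N_t:=F^{N,1}_t-\mu_t$ and $D^{N,m}_t:=G^{N,m}_t-C^{N,m}$ for $m\ge2$, it thus suffices to prove
\[\|E^N_t\|_{W^{-\ell_{1,\theta},1}}\lesssim N^{-1}e^{-c_0t}+N^{-\theta},\qquad \|D^{N,m}_t\|_{W^{-\ell_{m,\theta},1}}\lesssim_m N^{1-m}e^{-c_0t}+N^{-\theta},\]
the second being the analogue of~\eqref{eq:cross-err-cor}; then~\eqref{eq:cross-weakre} follows, and~\eqref{eq:relax-weak} follows by the triangle inequality together with the mean-field relaxation $\|\mu_t-M\|\lesssim e^{-c_0t}$ of Lemma~\ref{lem:ergodic-strong} (valid for all $\kappa\ge0$ in Cases~1--2).

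As an a priori input I will use the weak-norm, uniform-in-time correlation bounds of~\cite{BD_2024}, which for smooth $K$ rest only on the ergodicity of mean field and hence hold for all $\kappa\ge0$ in Cases~1--2: for suitable $\ell$, $\|G^{N,m}_t\|_{W^{-\ell,1}}\lesssim_m N^{1-m}$ and $\|F^{N,1}_t-\mu_t\|_{W^{-\ell,1}}\lesssim N^{-1}$ uniformly in $t$; letting $t\to\infty$ gives also $\|C^{N,m}\|_{W^{-\ell,1}}\lesssim_m N^{1-m}$. Subtracting the stationary hierarchy satisfied by $\{C^{N,m}\}$ from the dynamic BBGKY hierarchy for $\{G^{N,m}\}$ produces a closed linear hierarchy for $(E^N,D^{N,2},D^{N,3},\ldots)$ of the schematic form
\[\partial_t D^{N,m}=\mathcal A_m(t)\,D^{N,m}+B_m\big[D^{N,m+1}\big]+\tfrac1N A_m\big[D^{N,m-1}\big]+S^{N,m}_t,\]
where $\mathcal A_m(t)$ is the $m$-fold linearization of the mean-field flow~\eqref{eq:MKV} along the trajectory $\mu_t$, restricted to the subspace of functions with vanishing marginals; $B_m\sim\kappa\int K\cdot\nabla(\cdot)$ is the closure term, of order-one coefficient but applied to a factor of size $O(N^{-m})$; $\tfrac1N A_m$ is the correlation-creation term; and $S^{N,m}_t$ gathers the terms $(F^{N,1}_t-M)\cdot\nabla C^{N,m}$ and all bilinear-and-higher products of lower-order $D$'s and $E^N$, all of size $O(N^{1-m}e^{-c_0t})+O(N^{-\theta})$ by the a priori bounds and the relaxation of $\mu_t$. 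The level $m=1$ obeys the same structure with $D^{N,1}$ read as $E^N$; here a key simplification is that in Cases~1--2 the mean-field equilibrium $M$ is the Lebesgue measure, hence stationary for~\eqref{eq:MKV}, so the one-particle closure of the stationary hierarchy forces $c_1[C^{N,2}]=0$ and the source in the $E^N$-equation is just $\kappa\,c_1[D^{N,2}]$ plus the quadratic remainder.

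By~\cite[Sections~3.6.2--3.6.3]{Delarue_Tse_21} (and~\cite{Carrillo_2023} for Case~2), the mean-field linearization along $\mu_t$ relaxes exponentially in weak norms up to a fixed loss $\ell_\ast$ of derivatives; tensorizing and restricting to the zero-marginal subspace, the propagator $\mathcal T_m(t,s)$ of $\mathcal A_m(t)$ then obeys $\|\mathcal T_m(t,s)h\|_{W^{-\ell,1}}\lesssim e^{-c_0(t-s)}\|h\|_{W^{-\ell-\ell_\ast,1}}$. Fix $\theta$, set $M_\theta:=\lceil\theta\rceil+1$, and truncate at level $M_\theta$ by bounding the dropped closure term $B_{M_\theta}[D^{N,M_\theta+1}]$ crudely by $\lesssim N^{-M_\theta}\le N^{-\theta}$. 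In the natural rescaling $D^{N,m}\mapsto N^{m-1}D^{N,m}$ (and $E^N\mapsto NE^N$) the couplings between consecutive levels become either $O(N^{-1})$ (the upward closure $B_m$) or carry an $e^{-c_0t}$-decaying prefactor (the downward creation $\tfrac1N A_m$ and the perturbation pieces of $S^{N,m}$, through $\mu_t-M$, $E^N$, or lower $D$'s), while the initial data $D^{N,m}_\circ=-C^{N,m}$ is $O(N^{1-m})$ and $E^N_\circ=0$. Running Duhamel's formula
\[D^{N,m}_t=\mathcal T_m(t,0)D^{N,m}_\circ+\int_0^t\mathcal T_m(t,s)\Big(B_m[D^{N,m+1}_s]+\tfrac1N A_m[D^{N,m-1}_s]+S^{N,m}_s\Big)\,\ddr s\]
(and its analogue for $E^N$) over the finitely many levels $1\le m\le M_\theta$, and closing by a fixed-point/Gr\"onwall argument — the $e^{-c_0(t-s)}$ decay of $\mathcal T_m$ turning the decaying sources into $t\,e^{-c_0t}\lesssim e^{-c_0't}$, and the $O(N^{-1})$ couplings being a harmless perturbation for $N$ large relative to $M_\theta$ — yields the claimed bounds with $\ell_{m,\theta}$ growing by $\ell_\ast+O(1)$ at each step. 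Reassembling via the cluster expansion gives~\eqref{eq:cross-weakre}, and~\eqref{eq:relax-weak} follows as above.

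The main obstacle is that the operator $\mathcal A_m(t)$ driving each $D^{N,m}$ is the mean-field linearization along the entire trajectory $\mu_t$, not around equilibrium, so its exponential decay is not a perturbative fact — this is exactly where the structural hypotheses of Cases~1--2 (translation-invariant divergence-free, respectively $H$-stable) are needed, via~\cite{Delarue_Tse_21}, and this reliance on weak-norm ergodic estimates with a derivative loss at each level of the hierarchy is what forces both the exponent $\ell_{m,\theta}$ and the $O(N^{-\infty})$ truncation term in the statement. A secondary point is the mild circularity between consecutive levels (the equation for $E^N$ feeds into that for $D^{N,2}$ and conversely): it is resolved by treating the truncated family $(E^N,D^{N,2},\ldots,D^{N,M_\theta})$ simultaneously, the a priori correlation bounds of~\cite{BD_2024} supplying the closure at the top.
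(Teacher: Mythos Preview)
Your overall plan---reduce via the cluster expansion to controlling $E^N=F^{N,1}-\mu$ and $D^{N,m}=G^{N,m}-C^{N,m}$, feed in the weak-norm correlation bounds of~\cite{BD_2024}, and close on the differenced BBGKY hierarchy using the non-perturbative ergodic estimates of~\cite{Delarue_Tse_21,Carrillo_2023}---is the right one and matches the paper's architecture. Two points of genuine divergence from the paper are worth noting. First, you linearize along the full trajectory $\mu_t$, whereas the paper linearizes at equilibrium $M$ (operator $L_M$, time-independent); both are legitimate, but the paper's choice avoids the time-dependent propagator and shifts the $\mu_t-M$ discrepancy into an explicitly decaying source. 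Second, and more importantly, you close by a single truncation-plus-Gr\"onwall, while the paper runs a \emph{double induction}: an outer induction on the exponent $\theta_0$ in the one-particle cross error (assume $\|F^{N,1}-\mu-M^{N,1}+M\|\lesssim N^{-1}e^{-c_0t}+N^{-\theta_0}$, prove it with $\theta_0\to\theta_0+1$), and inside each step an induction on an auxiliary exponent $\alpha$ to upgrade the correlation-relaxation bound.

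There is a real gap in your closure step. Your claim that after the rescaling $\tilde D^{N,m}=N^{m-1}D^{N,m}$, $\tilde E^N=NE^N$, the inter-level couplings are ``either $O(N^{-1})$ or carry an $e^{-c_0t}$-decaying prefactor'' is not correct. The upward closure $B_1$ from $\tilde E^N$ to $\tilde D^{N,2}$ is $O(\kappa)$, not $O(N^{-1})$, since both are rescaled by the same factor $N$. Likewise, your downward term $\tfrac1N A_m[D^{N,m-1}]$ becomes $A_m[\tilde D^{N,m-1}]$ with an $O(\kappa)$ coefficient after rescaling---no $e^{-c_0t}$ prefactor appears unless $\tilde D^{N,m-1}$ itself already decays, which is what you are trying to prove. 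The same holds for the ``middle'' product terms $G^{N,a}\otimes G^{N,b}-C^{N,a}\otimes C^{N,b}$ with $a+b=m+1$ and $a,b\ge2$: after rescaling these couple $\tilde D^{N,m}$ to $\tilde D^{N,a}$ with $O(\kappa)$ coefficients. So the rescaled system is not ``decaying diagonal plus small perturbation''; it has order-one off-diagonal entries, and for $\kappa$ not small a naive Gr\"onwall does not close. The paper's double induction is precisely what breaks this circularity: the outer hypothesis on the $m=1$ cross error bounds the $(F^{N,1}-M^{N,1})$ factors appearing in the $D^{N,m}$ equations, and the inner induction on $\alpha$ successively halves the residual $N$-power, gaining one power of $N^{-1}$ per pass through the hierarchy rather than trying to absorb everything at once. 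Your approach can be repaired along these lines (the rescaled system is, after reordering, lower-triangular plus $O(N^{-1})$, hence nilpotent-plus-small), but the argument as written does not establish this.
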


\begin{rem}
\label{rem:improved_chaos_kappa}
In Case~1, as $M^N=1$, the cross estimate~\eqref{eq:cross-weakre} further implies the following accelerated propagation of chaos, for all $1\le m\le N$ and~$t,\theta\ge0$,
\[\|F_t^{N,m}-\mu_t^{\otimes m}\|_{W^{-\ell_{m,\theta},1}(\T^{dm})}\,\le\,C_{m,\theta}\big(N^{-1}e^{-c_0t}+N^{-\theta}\big).\]
In Case~2, the same holds for $m=1$ as $M^{N,1}=M=1$.
\end{rem}

\subsection{Examples}\label{sec:exs}
We briefly describe some applications of our results.
In item~\eqref{ex:opinion}, we give an example with merely bounded interaction forces, for which even our uniform-in-$N$ Gibbs relaxation result in Proposition~\ref{prop:correlations_L2_Langevin}(ii) is new.

\begin{enumerate}[(a)]
\smallskip\item\emph{Mollified Coulomb interactions:}
\label{ex:Coulomb}\\
Coulomb interactions in dimension $d\ge2$ are of course excluded in this work due to their singularity, but we can consider mollified versions (see~\cite{Rosenzweig-Serfaty-21,Rosenzweig-Serfaty-23} otherwise). For instance, on $\R^d$ with quadratic confinement $A=\frac12|x|^2$, we can consider the particle system with interactions given by
\[K(x,y)=\frac{x-y}{|x-y|^d+\sigma},\]
for some fixed $\sigma>0$.
In this setting,
both for the underdamped and for the overdamped Langevin dynamics,
all our results apply: for weak interactions, that is, for $\kappa$ small enough, Proposition~\ref{prop:correlations_L2_Langevin} yields uniform-in-time correlation estimates and uniform-in-$N$ Gibbs relaxation, and Theorem~\ref{thm:main-Langevin} further yields new estimates for the cross mean-field/relaxation error.

\smallskip\noindent
On the torus $\T^d$ with $A\equiv0$, mollified Coulomb interactions can be defined by convolving the periodic Coulomb potential $W_0$,
\[K(x,y)=-\nabla W(x-y),\qquad W=\Gamma_\sigma\ast W_0,\]
for instance with the periodized Gaussian kernel $\Gamma_\sigma(x)=(2\pi\sigma^2)^{-d/2}\sum_{n\in\Z^d}\exp(-\frac1{2\sigma^2}|x+n|^2)$, for some fixed~$\sigma>0$.
For $\kappa$ small enough, the same results hold in this setting, and we recall that Corollary~\ref{cor:MFTorus-Langevin} then further yields an improved convergence rate for the one-particle density (see Corollary~\ref{cor:MFTorus-Xie} in the overdamped setting).
In addition, it is easily checked that $K$ satisfies the assumptions of Case~2 in Section~\ref{sec:extensions}: hence, in the overdamped setting, without any smallness requirement on $\kappa>0$, Theorem~\ref{th:non-pert} yields a uniform-in-$N$ Gibbs relaxation result and cross error estimates up to $O(N^{-\infty})$ in negative Sobolev norms.

\smallskip
\item \emph{Noisy Hegselmann-Krause model for opinion dynamics:}\label{ex:opinion}\\
This model, introduced in~\cite{HK_02}, describes the evolution of the opinion of agents only influenced by their neighbors within a radius. The long-term behavior of the corresponding mean-field equation was studied in~\cite{Carrillo_2019}. The model takes form of the overdamped Langevin dynamics~\eqref{eq:overdamped}, say on the torus $\T^d$ with $A \equiv 0$ for simplicity, with interactions given by
\[K(x,y) = - \nabla W(x-y),\qquad W(x) =\left\{\begin{array}{lll}
(|x|-r)^2&:&|x|\le r,\\
0&:&|x|> r,
\end{array}\right.
 \]
for some fixed radius $r>0$.
In this setting, for $\kappa$ small enough, uniform-in-time propagation of chaos and correlation estimates follow from~\cite{Xie-24}. Corollary~\ref{cor:MFTorus-Xie} further provides an improved convergence rate for the one-particle density. Next, since $K$ is not Lipschitz, we obtain the first uniform-in-$N$ Gibbs relaxation result, cf.\@ Proposition~\ref{prop:correlations_L2_Langevin}(ii). Finally, Theorem~\ref{thm:main-overdamped} provides new estimates on the cross mean-field/relaxation error.

\smallskip\noindent
The same conclusions also hold for the model introduced in~\cite{Barre2017} in the context of interacting dynamical networks, which is an extension of the above with
\[W(x) = \left\{ 
\begin{array}{lll}
    (|x| - \ell)^2 - (r-\ell)^2&:& |x| \le r, \\
    0&:&|x| > r,
\end{array}
\right.
\]
for some parameters $r\ge\ell>0$.
\end{enumerate}

\subsection*{Notation}
\begin{enumerate}[---]
\item
We denote by $C \ge 1$ any constant that only depends on the space dimension $d$, on $\|K\|_{L^\infty}$, on the convexity bound $\lambda$ for $A$ in~\eqref{eq:confinement-A}, on $\|\mu_\circ\|_{L^2(\omega)}$, and possibly on other controlled quantities when specified.
We use the notation $\lesssim$ for $\le C \times $ up to such a multiplicative constant $C$. We add subscripts to $C, \lesssim$ to indicate dependence on other parameters.
\smallskip\item For an integer $m\ge1$, we write $\llbracket m\rrbracket := \{1,\ldots, m\}$, and for an index set $P\subset\llbracket N\rrbracket$ we write $z_P:=(z_{i_1},\ldots,z_{i_\ell})$ if $P=\{i_1,\ldots,i_\ell\}$.
\end{enumerate}

\subsection*{Plan of the paper}
Section~\ref{sec:preliminary} provides several preliminary definitions and results, and contains in particular a short proof of Proposition~\ref{prop:correlations_L2_Langevin}.
Section~\ref{sec:thm-Langevin} is devoted to the proof of Theorem~\ref{thm:main-Langevin} and Corollary~\ref{cor:MFTorus-Langevin}. Next, Section~\ref{sec:overdamped} describes the adaptation of our analysis to the overdamped dynamics. Finally, Section~\ref{sec:extension-app} is devoted to the proof of Theorem~\ref{th:non-pert}, which follows the same lines but requires some important adaptations, avoiding bootstrap arguments and using negative Sobolev norms.

\section{Preliminary}\label{sec:preliminary}
This section is devoted to several preliminary results, and in particular to the proof of Proposition~\ref{prop:correlations_L2_Langevin} (see Sections~\ref{subsec:Gibbs_relax} and~\ref{sec:corr_Langevin} below).
We focus here on the underdamped Langevin dynamics~\eqref{eq:Liouville_Langevin}. We recall $\Xd = \R^d \times \R^d$ and $\omega(z)=\exp(\beta(\frac12|v|^2+A(x)))$ for $z=(x,v)\in\Xd$.

\subsection{Mean-field relaxation}
Consider the following linear Fokker-Planck operator on the weighted space~$L^2(\omega)$,
\[ L_0h\,:=\,\Div_v((\nabla_v+\beta v)h)-v\cdot\nabla_xh+\nabla A(x)\cdot\nabla_vh.\]
The following ergodic estimates for the mean-field evolution~\eqref{eq:VFP} and for the Fokker-Planck semigroup~$\{e^{t L_0}\}_{t\ge0}$ follow from standard hypocoercivity theory~\cite{Villani_2009,Dolbeault_Hypocoercivity_2015}.

\begin{lem}\label{lem:ergodic-strong2}
Let $K\in L^\infty(\R^{2d})^d$ and let the confining potential $A$ satisfy~\eqref{eq:confinement-A}. There exist $\kappa_0,c_0>0$ (only depending on $d,\beta, K,A$) such that the following hold for all $\kappa\in[0,\kappa_0]$.
\begin{enumerate}[(i)]
\item Given $\mu_\circ\in \Pc(\Xd)\cap L^2(\omega)$, the solution $\mu\in C(\R_+;\Pc(\Xd)\cap L^2(\omega))$ of the mean-field equation~\eqref{eq:VFP} satisfies for all $t\ge0$,
\begin{align}
    \label{eq:cvg-MF}
\|\mu_t- M\|_{L^2(\omega)}\,\lesssim\,e^{-c_0t}\|\mu_\circ- M\|_{L^2(\omega)}.
\end{align}
\item For all $h\in C^\infty_c(\Xd)$, we have for all $t\ge0$,
\begin{equation}\label{eq:estimL-0}
e^{2c_0t}\|\pi^\bot e^{t L_0}h\|_{L^2(\omega)}^2+\int_0^te^{2c_0s}\|(\nabla_v+\beta v) e^{s L_0}h\|_{L^2(\omega)}^2 \, \ddr s\,\lesssim\,\|\pi^\bot h\|_{L^2(\omega)}^2,
\end{equation}
where $\pi^\bot$ stands for the orthogonal projection onto $\Ker(L_0)^\bot$,
\[\pi^\bot:=\Id-\pi,\qquad \pi h:=M_0\int_\Xd h,\qquad M_0:=\tfrac{\omega^{-1}}{\int_\Xd\omega^{-1}}.\]
In particular, by duality, for all $g\in L^2_\loc(\R_+;L^2(\Xd)^d)$,
\begin{equation}\label{eq:estimL-dual}
\Big\|\int_0^te^{(t-s) L_0}\Div_v(g(s))\,\ddr s\Big\|_{L^2(\omega)}^2\,\lesssim\,\int_0^te^{-2c_0(t-s)}\|g(s)\|_{L^2(\omega)}^2\,\ddr s.
\end{equation}
\end{enumerate}
\end{lem}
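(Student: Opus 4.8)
The final statement to prove is Lemma~\ref{lem:ergodic-strong2}, which provides hypocoercive ergodic estimates for the mean-field Vlasov-Fokker-Planck equation and for the linear kinetic Fokker-Planck semigroup. Let me sketch a proof plan.

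For part (ii), the estimate \eqref{eq:estimL-0} is a purely linear statement about the kinetic Fokker-Planck operator $L_0$ on $L^2(\omega)$. The plan is to invoke the now-standard hypocoercivity machinery. The cleanest route is the $L^2$ hypocoercivity framework of Dolbeault-Mouhot-Schmeiser as presented in \cite{Dolbeault_Hypocoercivity_2015}: after conjugating by $\omega^{1/2}$ one works with $M_0^{-1/2}$-weighted norms and the operator splits into a symmetric (dissipative) part $S = \Div_v(\nabla_v + \beta v) \cdot$ acting only in velocity and a skew-symmetric transport part $T = v\cdot\nabla_x - \nabla A\cdot\nabla_v$. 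One introduces the modified entropy functional $\mathcal{H}[h] = \frac12\|h\|^2 + \eta\, \langle \mathsf{A}h, h\rangle$ with $\mathsf{A} = (1 + (TP_v)^*(TP_v))^{-1}(TP_v)^*$ (or the analogous field), where $P_v$ projects onto velocity-local equilibrium; the microscopic coercivity from $S$ (spectral gap of the Ornstein-Uhlenbeck operator in $v$, which holds since $\beta>0$) together with the macroscopic coercivity coming from $\nabla^2 A \ge \lambda\,\Id$ (Poincaré in $x$ for the Gaussian-type weight $M_0$, which holds by uniform convexity \eqref{eq:confinement-A}) yields $\frac{d}{dt}\mathcal{H}[e^{tL_0}h] \le -2c_0\,\mathcal{H}[e^{tL_0}h]$ for $\eta$ small, hence exponential decay with rate $c_0$, and $\mathcal{H}$ is equivalent to $\|\pi^\bot \cdot\|_{L^2(\omega)}^2$. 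The additional integrated term $\int_0^t e^{2c_0s}\|(\nabla_v+\beta v)e^{sL_0}h\|^2\,ds$ is recovered by integrating the dissipation identity for $e^{2c_0 t}\mathcal{H}[e^{tL_0}h]$ rather than just using the differential inequality: the time derivative produces precisely $-\|(\nabla_v+\beta v)e^{tL_0}h\|^2$ on the right-hand side up to the $\mathsf A$-correction, which is of lower order. The dual estimate \eqref{eq:estimL-dual} then follows by a direct duality/Cauchy-Schwarz argument: writing $\int_0^t e^{(t-s)L_0}\Div_v(g(s))\,ds$, pairing with an arbitrary test function $\varphi$, moving $e^{(t-s)L_0^*}$ onto $\varphi$, integrating the $\Div_v$ by parts to land $(\nabla_v+\beta v)$ on $e^{(t-s)L_0^*}\varphi$, and applying \eqref{eq:estimL-0} to $L_0^*$ (which has the same structure) gives the claim after an application of Cauchy-Schwarz in the time integral with the weight $e^{-2c_0(t-s)}$; note $\Div_v g$ is mean-zero so only $\pi^\bot$ appears.

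For part (i), the decay \eqref{eq:cvg-MF} for the nonlinear equation is obtained by a perturbative argument off the linear semigroup, exploiting smallness of $\kappa$. Write $\mu_t = M + h_t$ with $\int_\Xd h_t = 0$ (mass conservation), so $h$ is $\pi^\bot$-valued, and expand \eqref{eq:VFP} around $M$. The first step is existence/uniqueness of the steady state $M$: a fixed-point/contraction argument in $L^2(\omega)$ using that the map $\nu \mapsto$ (stationary solution of the linear FP equation with drift $-\kappa K\ast\nu$) is a contraction for $\kappa$ small, since $K$ is bounded and the linear FP resolvent is controlled via part (ii). Then one writes the Duhamel formula $h_t = e^{tL_0}h_0 + \int_0^t e^{(t-s)L_0}\mathcal{R}_s\,ds$ where $\mathcal{R}_s = -\kappa\,\Div_v((K\ast\mu_s)\mu_s) + \kappa\,\Div_v((K\ast M)M)$, and rewrites $\mathcal{R}_s = -\kappa\,\Div_v\big((K\ast h_s)M + (K\ast M)h_s + (K\ast h_s)h_s\big)$, each term of the form $\Div_v(g_s)$ with $\|g_s\|_{L^2(\omega)} \lesssim \|h_s\|_{L^2(\omega)}(1 + \|h_s\|_{L^2(\omega)})$ using $\|K\|_{L^\infty}$ and the Gaussian weight to absorb $M$ and $v$. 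Applying \eqref{eq:estimL-0} to the first term and \eqref{eq:estimL-dual} to the Duhamel integral, and setting $\Phi(t) := \sup_{s\le t} e^{c_0 s}\|h_s\|_{L^2(\omega)}$, one obtains a Gronwall-type bound $\Phi(t) \lesssim \|h_0\|_{L^2(\omega)} + \kappa\,\Phi(t) + \kappa\,\Phi(t)^2 / (\text{const})$; choosing $\kappa_0$ small enough absorbs the linear-in-$\Phi$ term on the left, and a standard continuity/bootstrap argument controls the quadratic term provided $\|h_0\|_{L^2(\omega)} = \|\mu_\circ - M\|_{L^2(\omega)}$ is not too large — but since we only claim the estimate up to a constant, the quadratic term is handled by noting $\Phi$ stays bounded and absorbing; this yields $\Phi(t)\lesssim\|\mu_\circ-M\|_{L^2(\omega)}$, i.e.\ \eqref{eq:cvg-MF}.

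The main obstacle is the self-contained verification of the hypocoercivity estimate \eqref{eq:estimL-0} in exactly the stated form, including the extra dissipative integral term with the exponential weight $e^{2c_0 s}$ — most textbook statements give only the pointwise exponential decay of the modified norm, so one must carefully track the dissipation term through the energy identity and check it dominates $\|(\nabla_v+\beta v)e^{sL_0}h\|^2$ after the $\mathsf A$-commutator corrections are estimated (these are controlled by Young's inequality and the boundedness of $\mathsf A$ and $T\mathsf A$, which are the core lemmas of the DMS framework). A secondary subtlety is that $A$ is only assumed uniformly convex with no growth bound on higher derivatives, so one should make sure the hypocoercivity argument used does not secretly require bounds on $\nabla^2 A$ from above or on $\nabla^3 A$; the DMS $L^2$ framework is robust enough here since it only uses the Poincaré inequality for $M_0$ (guaranteed by Bakry-Émery from \eqref{eq:confinement-A}) and the boundedness of the operators $\mathsf A, T\mathsf A$, which can be verified abstractly. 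Given the paper's framing ("follow from standard hypocoercivity theory"), the expectation is that the proof will be short and cite \cite{Villani_2009, Dolbeault_Hypocoercivity_2015} for the bulk of (ii), then run the perturbative Duhamel/bootstrap argument for (i).
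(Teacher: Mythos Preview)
Your approach is essentially the same as the paper's: invoke the DMS hypocoercivity framework \cite{Dolbeault_Hypocoercivity_2015} for (ii), then run a perturbative Duhamel argument for small~$\kappa$ for (i). Two tactical choices differ and are worth noting. First, for the integrated dissipation term in~\eqref{eq:estimL-0}, the paper does \emph{not} dig into the modified functional~$\mathcal H$: it takes the pointwise decay $\|\pi^\bot e^{tL_0}h\|\lesssim e^{-c_0t}\|\pi^\bot h\|$ as a black box from DMS, then combines it with the \emph{plain} energy identity $\partial_t\|\pi^\bot e^{tL_0}h\|^2+2\|(\nabla_v+\beta v)e^{tL_0}h\|^2\le0$ to recover the weighted time integral --- this sidesteps the $\mathsf A$-commutator bookkeeping you anticipate. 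Second, for (i) the paper does not linearize around~$M$: it first derives a uniform a~priori bound $\|\mu_t\|_{L^2(\omega)}\lesssim\|\mu_\circ\|_{L^2(\omega)}$ via Duhamel and Gronwall, and then proves a contraction estimate between any two solutions $\mu,\mu'$; existence and uniqueness of~$M$ then fall out as a corollary, and the quadratic-term bootstrap you worry about never arises (the a~priori bound makes the nonlinear term effectively linear in the difference). The paper also includes, in a remark, the explicit verification of the DMS assumption~(H4) via an elliptic regularity computation using only the uniform convexity of~$A$, which is exactly the subtlety you flag.
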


\begin{proof}
We start with a brief discussion of item~(ii).
By the hypocoercivity techniques in~\cite[Theorem~10]{Dolbeault_Hypocoercivity_2015} (see Remark~\ref{rem:hypocoerc} below),
there is $c_0>0$ (only depending on $d,\beta,A$) such that for all~$t \ge 0$,
\begin{equation}\label{eq:estimL-DMS}
\|\pi^\bot e^{t L_0}h\|_{L^2(\omega)}\,\lesssim\,e^{-c_0t}\|\pi^\bot h\|_{L^2(\omega)}.
\end{equation}
Using the energy identity for $e^{tL_0}h$,
\[\partial_t\|\pi^\bot e^{t L_0}h\|_{L^2(\omega)}^2+2\|(\nabla_v+\beta v) e^{t L_0}h\|_{L^2(\omega)}^2\le0,\]
we may then deduce from~\eqref{eq:estimL-DMS},
\[\partial_t\Big(e^{c_0t}\|\pi^\bot e^{t L_0}h\|_{L^2(\omega)}^2\Big)+2e^{c_0t}\|(\nabla_v+\beta v) e^{t L_0}h\|_{L^2(\omega)}^2\le c_0e^{c_0t}\|\pi^\bot e^{t L_0}h\|_{L^2(\omega)}^2\lesssim e^{-c_0t}\|\pi^\bot h\|_{L^2(\omega)}^2,\]
and thus after integration,
\begin{equation}\label{eq:estimL-0-pre}
e^{c_0t}\|\pi^\bot e^{t L_0}h\|_{L^2(\omega)}^2+2\int_0^te^{c_0s}\|(\nabla_v+\beta v) e^{s L_0}h\|_{L^2(\omega)}^2 \, \ddr s\,\lesssim\, \|\pi^\bot h\|_{L^2(\omega)}^2,
\end{equation}
that is,~\eqref{eq:estimL-0} (up to renaming $c_0$).
Next, by duality and integration by parts, we find
\begin{eqnarray*}
\lefteqn{\Big\|\int_0^te^{(t-s)L_0}\Div_v(g(s))\, \ddr s\,\Big\|_{L^2(\omega)}}\\
&=&\sup\bigg\{\int_0^t\Big(\int_\Xd\omega he^{(t-s)L_0}\Div_v(g(s))\Big) \ddr s:\|h\|_{L^2(\omega)}\le1\bigg\}\\
&=&\sup\bigg\{\int_0^t\Big(\int_\Xd\omega g(s)\cdot(\nabla_v+\beta v)e^{(t-s)L_0^*}h\Big)\, \ddr s:\|h\|_{L^2(\omega)}\le1\bigg\}\\
&\le&\Big(\int_0^te^{-c_0(t-s)}\|g(s)\|_{L^2(\omega)}^2 \, \ddr s \Big)^\frac12\sup\bigg\{\Big(\int_0^te^{c_0s}\|(\nabla_v+\beta v)e^{sL_0^*}h\|_{L^2(\omega)}^2 \, \ddr s\Big)^\frac12:\|h\|_{L^2(\omega)}\le1\bigg\}.
\end{eqnarray*}
Noting that~\eqref{eq:estimL-0-pre} also holds for $L_0$ replaced by its adjoint $L_0^*$ on $L^2(\omega)$, the conclusion~\eqref{eq:estimL-dual} follows (up to renaming~$c_0$ again).

For completeness, we include a short proof of item~(i) as a consequence of~(ii) for small $\kappa$. Let $\mu,\mu'\in C(\R_+;\Pc(\Xd)\cap\Ld^2(\omega))$ satisfy the mean-field equation~\eqref{eq:VFP} with initial data $\mu_\circ,\mu_\circ'\in\Pc(\Xd)\cap\Ld^2(\omega)$. Treating interactions perturbatively, Duhamel's formula for~\eqref{eq:VFP} yields
\[\mu_t=e^{tL_0}\mu_\circ-\kappa\int_0^te^{(t-s)L_0}\big((K\ast\mu_s)\cdot\nabla_v\mu_s\big)\ddr s,\]
and thus, applying the projection $\pi^\bot$, taking the norm, and applying the estimates of item~(ii) for the Fokker-Planck semigroup,
\begin{eqnarray*}
\|\pi^\bot\mu_t\|_{L^2(\omega)}^2
&\lesssim&e^{-2c_0t}\|\pi^\bot\mu_\circ\|_{L^2(\omega)}^2+\kappa^2\int_0^te^{-2c_0(t-s)}\|\mu_s(K\ast\mu_s)\|_{L^2(\omega)}^2\ddr s\\
&\lesssim&e^{-2c_0t}\|\pi^\bot\mu_\circ\|_{L^2(\omega)}^2+\kappa^2\int_0^te^{-2c_0(t-s)}\|\mu_s\|_{L^2(\omega)}^2\ddr s.
\end{eqnarray*}
Since $\mu=\pi^\bot\mu+M_0$, this yields
\begin{equation*}
\|\mu_t\|_{L^2(\omega)}^2
\,\le\,
C\|\mu_\circ\|_{L^2(\omega)}^2
+C\kappa^2\int_0^te^{-2c_0(t-s)}\|\mu_s\|_{L^2(\omega)}^2\ddr s.
\end{equation*}
For $\kappa$ small enough so that $C\kappa^2\le c_0$,
we deduce by Gronwall's inequality, for all $t\ge0$,
\begin{equation}\label{eq:apriori-mu}
\|\mu_t\|_{L^2(\omega)}\,\lesssim\,\|\mu_\circ\|_{L^2(\omega)}.
\end{equation}
Let us now examine the difference $\mu-\mu'$, which satisfies
\[(\partial_t-L_0)(\mu-\mu')=-\kappa K\ast\mu'\cdot\nabla_v(\mu-\mu')-\kappa K\ast(\mu-\mu')\cdot\nabla_v\mu.\]
Writing Duhamel's formula for this equation, taking the norms, and using again the estimates of item~(ii) with $\pi^\bot(\mu-\mu')=\mu-\mu'$, we get
\begin{eqnarray*} 
\lefteqn{\|\mu_t-\mu'_t\|_{L^2(\omega)}^2}\\
&\lesssim&e^{-2c_0t}\|\mu_\circ-\mu'_\circ\|_{L^2(\omega)}^2+\kappa^2 \int_0^te^{-2c_0(t-s)}\Big\|(\mu_s-\mu_s')(K\ast\mu_s')+\mu_s(K\ast(\mu_s-\mu_s'))\Big\|_{L^2(\omega)}^2\ddr s\\
&\lesssim&e^{-2c_0t}\|\mu_\circ-\mu'_\circ\|_{L^2(\omega)}^2+\kappa^2 \int_0^te^{-2c_0(t-s)}\big(1+\|\mu_s\|_{L^2(\omega)}\big)^2\|\mu_s-\mu_s'\|_{L^2(\omega)}^2\ddr s.
\end{eqnarray*}
Using the above a priori bound~\eqref{eq:apriori-mu} on $\mu$, Gronwall's inequality yields for $\kappa$ small enough, for all~$t\ge0$,
\[\|\mu_t-\mu'_t\|_{L^2(\omega)}^2\,\lesssim\,e^{-c_0t}\|\mu_\circ-\mu'_\circ\|_{L^2(\omega)}^2.\]
From this contraction property, we find that there is a unique steady state $M$, that it belongs to $L^2(\omega)$, and the conclusion~(i) follows.
\end{proof}

\begin{rem}\label{rem:hypocoerc}
For completeness, we emphasize that the assumptions in~\cite{Dolbeault_Hypocoercivity_2015} are indeed satisfied for the operator $L_0$ on $L^2(\omega)$ with~$A$ satisfying~\eqref{eq:confinement-A}, so we can apply their hypocoercivity results and deduce~\eqref{eq:estimL-DMS} above.
Assumption~(H1) in~\cite{Dolbeault_Hypocoercivity_2015} is simply the Gaussian Poincar\'e inequality, and assumption~(H3) is trivial for kinetic equations. Next, as $A$ is uniformly convex, we recall that the probability measure $\propto e^{-A}$ on $\R^d$ satisfies a Poincar\'e inequality, see e.g.~\cite[Section A.19]{Villani_2009}. By~\cite[Lemma~3]{Dolbeault_2009}, this ensures the validity of Assumption~(H2) in~\cite{Dolbeault_Hypocoercivity_2015}. Finally, given $w\in C^\infty_c(\R^d)$, for the solution $u$ of
\[e^{-A}u-\Div(e^{-A}\nabla u)=e^{-A}w,\qquad\text{in $\R^d$},\]
we find by multiple integrations by parts, using the summation convention on repeated indices,
\begin{eqnarray*}
\|\nabla^2u\|^2_{L^2(e^{-A})}
&=&-\int_{\R^d}(\nabla_{j}u)\nabla_i(e^{-A}\nabla_{ij}^2u)\\
&=&-\int_{\R^d}(\nabla_{j}u)\nabla_{ij}^2(e^{-A}\nabla_{i}u)-\int_{\R^d}(\nabla_{j}u)\nabla_{i}(e^{-A}(\nabla_jA)\nabla_{i}u)\\
&=&-\int_{\R^d}(\nabla_{j}u)(\nabla_{j}+\nabla_jA)\nabla_{i}(e^{-A}\nabla_{i}u)-\int_{\R^d}(\nabla_{i}u)(\nabla_{j}u)(\nabla_{ij}^2A)e^{-A}.
\end{eqnarray*}
Hence, using the equation for $u$ in form of $\nabla_i(e^{-A}\nabla_iu)=e^{-A}(u-w)$, and recalling the uniform convexity of $A$,
\begin{equation*}
\|\nabla^2u\|^2_{L^2(e^{-A})}
\,\le\,-\int_{\R^d}e^{-A}(\nabla_{j}u)\nabla_{j}(u-w)
\,\le\,\int_{\R^d}e^{-A}(\nabla_{j}u)(\nabla_{j}w).
\end{equation*}
Further integrating by parts and using the equation for $u$, we get
\begin{multline*}
\|\nabla^2u\|^2_{L^2(e^{-A})}
\,\le\,-\int_{\R^d}w\nabla_j(e^{-A}\nabla_{j}u)
\,=\,\int_{\R^d}we^{-A}(w-u)\\
\,\lesssim\,\|w\|_{L^2(e^{-A})}^2+\|u\|_{L^2(e^{-A})}^2
\,\lesssim\,\|w\|_{L^2(e^{-A})}^2.
\end{multline*}
By~\cite[Lemma~4]{Dolbeault_Hypocoercivity_2015}, this proves the validity of Assumption~(H4) in~\cite{Dolbeault_Hypocoercivity_2015}.
\end{rem}

\begin{rem}
We recall the following useful observation, see e.g.~\cite[Lemma~II.3.10]{Engel-Nagel}: if $(T_t)_{t\ge0}$ is a semigroup on a Banach space $(B,\|\cdot\|_B)$ and if there exist $P>0$ and $\omega\in\R$ such that we have $\|T_th\|_B\le Pe^{\omega t}\|h\|_B$ for all $t\ge0$ and $h\in B$, then there exists a modified norm $\3\cdot\3_B$ on $B$ that is Lipschitz-equivalent to $\|\cdot\|_B$ such that $\3 T_th\3_B\le e^{\omega t}\3 h\3_B$, where now the multiplicative constant is equal to $1$. Indeed, it suffices to consider the norm
\[\3 h\3_B\,:=\,\sup_{s\ge0}e^{-\omega s}\|T_sh\|_B,\]
which obviously satisfies, for all $h\in B$,
\[\|h\|_B\le\3h\3_B\le P\|h\|_B,\qquad\3T_th\3_B
\le e^{\omega t}\3 h\3_B.\]
Applying this observation to our setting in~\eqref{eq:estimL-DMS} above, with $L_0$ replaced by its adjoint $L_0^*$, we can similarly construct a norm $\3\cdot\3_{L^2(\omega)}^*$ on $L^2(\omega)$ such that
\begin{equation}\label{eq:equiv-norm}
\|h\|_{L^2(\omega)}\le\3h\3_{L^2(\omega)}^*\lesssim \|h\|_{L^2(\omega)},\qquad
\3\pi^\bot e^{tL_0^*}h\3_{L^2(\omega)}^*\le e^{-c_0t}\3\pi^\bot h\3_{L^2(\omega)}^*,
\end{equation}
where we used that the orthogonal projection $\pi^\bot$ commutes with the semigroup.
For $m\ge1$, we also define $\3\cdot\3_{L^2(\omega^{\otimes m})}$ as the associated injective cross norm on $\Ld^2(\omega^{\otimes m})$,
\begin{equation}\label{eq:equi-norm-m}
\3h\3_{L^2(\omega^{\otimes m})}\,:=\,\sup\bigg\{\int_{\Xd^m} \omega^{\otimes m}(\phi_1\otimes\ldots\otimes\phi_m) h\,:\,\3\phi_1\3_{L^2(\omega)}^*=\ldots=\3\phi_m\3_{L^2(\omega)}^*=1\bigg\},
\end{equation}
which is of course also Lipschitz equivalent to the standard norm on $L^2(\omega^{\otimes m})$.
\end{rem}

\subsection{BBGKY hierarchy for marginals}
In order to formulate the BBGKY hierarchy in a more convenient way, we first introduce some short-hand notation.

\begin{defin}\label{def:operator-L2-Langevin}
Consider a collection $\{h^m\}_{1\le m\le N}$ of functions $h^m:\Xd^m\to\R$ such that for all $m$ the function $h^m$ is symmetric in its $m$ entries (such as $\{F^{N,m}\}_{1\le m\le N}$).
\begin{enumerate}[---]
\item For $1\le m\le N$ and $P\subset\llbracket m\rrbracket$ with $P\ne\varnothing$, we define $h^P:\Xd^m\to\R$ as
$h^P(z_{\llbracket m\rrbracket}):=h^{\sharp P}(z_P)$, where~$\sharp P$ is the cardinality of $P$,
and for $P=\varnothing$ we set~$h^\varnothing:=0$ (as $h^m$ is not defined for $m<1$).
\smallskip\item For $P\subset\llbracket m\rrbracket$ with $P\ne\llbracket N\rrbracket$, we define $h^{P\cup\{*\}}:\Xd^{m+1}\to\R$ as
$h^{P\cup\{*\}}(z_{\llbracket m\rrbracket},z_*):=h^{\sharp P+1}(z_P,z_*)$,
and for $P=\llbracket N\rrbracket$ we set $h^{\llbracket N\rrbracket\cup\{*\}}=0$ (as $h^m$ is not defined for $m> N$).
\smallskip\item For $P\subset\llbracket m\rrbracket$ and $k,\ell \in P$, we define the operators
\begin{eqnarray*}
S_{k,\ell}h^P&:=&-K(x_k,x_\ell)\cdot\nabla_{v_k}h^P,\\
H_kh^{P\cup\{*\}}&:=&-\int_{\Xd}K(x_k,x_*)\cdot\nabla_{v_k}h^{P\cup\{*\}}\,\ddr z_*.
\end{eqnarray*}
\end{enumerate}
\end{defin}

With this notation at hand, taking partial integrals in the Liouville equation~\eqref{eq:Liouville_Langevin}, we are led to the following BBGKY hierarchy of coupled equations for marginals: for all $1 \le m \le N$, 
\begin{align}\label{eq:BBGKY_basic}
(\partial_t - L_0^{(m)}) F^{N,\llbracket m\rrbracket} = \kappa \frac{N-m}{N} \sum_{k=1}^m H_kF^{N,\llbracket m\rrbracket\cup\{*\}} + \frac{\kappa}{N} \sum_{k,\ell= 1}^m S_{k,\ell} F^{N,\llbracket m\rrbracket}, 
\end{align}
where the operator $L_0^{(m)}$ stands for the Kronecker sum of the Fokker-Planck operator $L_0$ on the $m$-particle space $L^2(\omega^{\otimes m})$,
\begin{equation}\label{eq:def-L0m}
L_0^{(m)} := \sum_{j=1}^m \Big( \Id^{\otimes m-j} \otimes L_0 \otimes \Id^{\otimes j - 1} \Big).
\end{equation}
In the long-time limit $t \uparrow \infty$, as the $N$-particle density $F^N$ converges to equilibrium $M^N$, we find that the equilibrium marginals~$\{M^{N,m}\}_{1\le m\le N}$ are a stationary solution of the same hierarchy~\eqref{eq:BBGKY_basic}.

\subsection{Gibbs relaxation}
\label{subsec:Gibbs_relax}

We show that the uniform-in-$N$ exponential relaxation to Gibbs equilibrium in~$L^2$ can be obtained as a direct estimate on the BBGKY hierarchy~\eqref{eq:BBGKY_basic}, using hypocoercivity in form of the ergodic estimates of Lemma~\ref{lem:ergodic-strong2}.

\begin{proof}[Proof of Proposition~\ref{prop:correlations_L2_Langevin}(ii)]
The proof is based on the analysis of the BBGKY hierarchy~\eqref{eq:BBGKY_basic}, restricted on the subspace where we can use hypocoercivity of the $m$-particle Fokker-Planck operator simultaneously in all $m$ directions.
For that purpose, we consider the following orthogonal projections on $L^2(\omega^{\otimes m})$, for $P\subset\llbracket m\rrbracket$,
\begin{equation}\label{eq:def-proj}
\pi_{P}^\bot:=\prod_{k\in P}\pi_k^\bot,\qquad\pi_k^\bot=\Id-\pi_k,\qquad\pi_kh^{\llbracket m\rrbracket}=M_0(z_k)\int_{\Xd} h^{\llbracket m\rrbracket} \, \ddr z_k.
\end{equation}
We split the proof into two steps.

\medskip
\step1 Tensorization of~\eqref{eq:estimL-dual}: for all $m\ge1$, $g_1,\ldots,g_m\in L^2_\loc(\R_+;L^2(\Xd)^d)$, and $t\ge0$,
\begin{equation}\label{eq:estim-divgsum}
\BigN\sum_{k=1}^m\int_0^te^{(t-s)L_0^{(m)}}\pi_{\llbracket m\rrbracket\setminus\{k\}}^\bot\Div_{v_k}(g_k(s))\,\ddr s\BigN_{L^2(\omega^{\otimes m})}^2\,\lesssim\,\sum_{k=1}^m\int_0^te^{-2mc_0(t-s)}\3g_k(s)\3_{L^2(\omega^{\otimes m})}^2\ddr s,
\end{equation}
where we emphasize that the multiplicative constant only depends on $d,\beta,K,A$ (not on $m$).
This is a direct extension of~\eqref{eq:estimL-dual}, where we note that the use of the equivalent norm defined in~\eqref{eq:equi-norm-m} allows to have a multiplicative factor independent of $m$.
We include a short proof for completeness. By definition~\eqref{eq:equi-norm-m} of the modified norms on the product, integrating by parts, and using that
\[e^{tL_0^{(m)}}=(e^{tL_0})^{\otimes m},\]
we find
\begin{multline*}
\BigN\sum_{k=1}^m\int_0^te^{(t-s)L_0^{(m)}}\pi_{\llbracket m\rrbracket\setminus\{k\}}^\bot\Div_{v_k}(g_k(s))\,\ddr s\BigN_{L^2(\omega^{\otimes m})}\\
=\sup_{\phi_1,\ldots,\phi_m} \sum_{k=1}^m\int_0^t\int_{\Xd^m} \omega^{\otimes m}g_k(s)\,\Big(\big((\nabla_{v_k}+\beta v_k)e^{(t-s)L_0^*}\phi_k\big)\otimes\bigotimes_{\ell:\ell\ne k}\big(e^{(t-s)L_0^*}\pi_\ell^\bot\phi_\ell\big)\Big)\,\ddr s,
\end{multline*}
where the supremum runs over $\phi_1,\ldots,\phi_m\in L^2(\omega)$ with $\3\phi_k\3_{L^2(\omega)}^*=1$ for all $1\le k\le m$. By the Cauchy-Schwarz inequality, this can be bounded as follows,
\begin{multline*}
\BigN\sum_{k=1}^m\int_0^te^{(t-s)L_0^{(m)}}\pi_{\llbracket m\rrbracket\setminus\{k\}}^\bot\Div_{v_k}(g_k(s))\,\ddr s\BigN_{L^2(\omega^{\otimes m})}^2
\le \bigg(\sum_{k=1}^m\int_0^te^{-2mc_0(t-s)}\3g_k(s)\3^2_{L^2(\omega^{\otimes m})}\bigg)\\
\times\sup_{\phi_1,\ldots,\phi_m}\bigg(\sum_{k=1}^m\int_0^te^{2mc_0(t-s)}\Big(\bigN(\nabla_{v_k}+\beta v_k)e^{(t-s)L_0^*}\phi_k\bigN_{L^2(\omega)}^{*}\prod_{\ell:\ell\ne k}\3e^{(t-s)L_0^*}\pi_\ell^\bot\phi_\ell\bigN_{L^2(\omega)}^{*}\Big)^2\,\ddr s\bigg).
\end{multline*}
Now appealing to~\eqref{eq:equiv-norm} (with constant $1$) and to~\eqref{eq:estimL-0} (with $L_0$ replaced by $L_0^*$ and using the equivalence of norms~\eqref{eq:equiv-norm}), the claim~\eqref{eq:estim-divgsum} follows.

\medskip
\step2 Conclusion.\\
From the BBGKY equations~\eqref{eq:BBGKY_basic} for marginals, recalling that marginals of the Gibbs equilibrium satisfy the stationary version of the same equations, we find for all $1\le m\le N$,
\begin{multline*}
(\partial_t-L_0^{(m)})(F^{N,m}-M^{N,m})
=\kappa\frac{N-m}N\sum_{k=1}^m H_k\big(F^{N,\llbracket m\rrbracket\cup\{*\}}-M^{N,\llbracket m\rrbracket\cup\{*\}}\big)\\[-4mm]
+\frac{\kappa}N\sum_{k,\ell=1}^mS_{k,\ell}\big(F^{N,\llbracket m\rrbracket}-M^{N,\llbracket m\rrbracket}\big).
\end{multline*}
Applying the projection $\pi_{\llbracket m\rrbracket}^\bot$ to both sides of this equation, noting that it commutes with $L_0^{(m)}$,
writing the Duhamel formula, and taking the norm, we get
\begin{multline*}
\bigN\pi_{\llbracket m\rrbracket}^\bot(F^{N,m}_t-M^{N,m})\bigN_{L^2(\omega^{\otimes m})}
\le
\bigN e^{tL_0^{(m)}}\pi_{\llbracket m\rrbracket}^\bot(F_\circ^{N,m}-M^{N,m})\bigN_{L^2(\omega^{\otimes m})}\\
+
\kappa \BigN \sum_{k=1}^m\int_0^te^{(t-s)L_0^{(m)}} \pi_{\llbracket m\rrbracket}^\bot H_k\big(F_s^{N,\llbracket m\rrbracket\cup\{*\}}-M^{N,\llbracket m\rrbracket\cup\{*\}}\big)\ddr s\BigN_{L^2(\omega^{\otimes m})}\\
+
\frac{\kappa}N\BigN\sum_{k,\ell=1}^m\int_0^t e^{(t-s)L_0^{(m)}}\pi_{\llbracket m\rrbracket}^\bot S_{k,\ell}\big(F_s^{N,\llbracket m\rrbracket}-M^{N,\llbracket m\rrbracket}\big)\ddr s\BigN_{L^2(\omega^{\otimes m})}.
\end{multline*}
Using~\eqref{eq:equiv-norm} to bound the first term, recalling the definition of $S_{k,\ell},H_k$, appealing to the result~\eqref{eq:estim-divgsum} of Step~1 to bound the last two terms, and using exchangeability, we are led to
\begin{multline}\label{eq:estim-prod}
\bigN\pi_{\llbracket m\rrbracket}^\bot(F_t^{N,m}-M^{N,m})\bigN_{L^2(\omega^{\otimes m})}^2
\,\le\,
Ce^{-2mc_0t}\bigN \pi_{\llbracket m\rrbracket}^\bot(F_\circ^{N,m}-M^{N,m})\bigN_{L^2(\omega^{\otimes m})}^2\\
+
C\kappa^2\int_0^tme^{-2mc_0(t-s)}\bigN\pi_{\llbracket m-1\rrbracket}^\bot(F_s^{N,m+1}-M^{N,m+1})\bigN_{L^2(\omega^{\otimes m+1})}^2\ddr s\\
+
\frac{C\kappa^2 m^2}{N^2} \int_0^tme^{-2mc_0(t-s)}\bigN\pi^\bot_{\llbracket m-2\rrbracket}(F_s^{N,m}-M^{N,m})\bigN_{L^2(\omega^{\otimes m})}^2\ddr s.
\end{multline}
Noting that
\begin{multline*}
\pi_{\llbracket m-2\rrbracket}^\bot F^{N,\llbracket m\rrbracket}=\pi_{\llbracket m\rrbracket}^\bot F^{N,\llbracket m\rrbracket}
+M_0(z_{m-1})\pi_{\llbracket m\rrbracket\setminus\{m-1\}}^\bot F^{N,\llbracket m\rrbracket\setminus\{m-1\}}\\
+M_0(z_{m})\pi_{\llbracket m-1\rrbracket}^\bot F^{N,\llbracket m-1\rrbracket}
-M_0(z_{m-1})M_0(z_m)\pi_{\llbracket m-2\rrbracket}^\bot F^{N,\llbracket m-2\rrbracket},
\end{multline*}
the above becomes
\begin{multline*}
\bigN\pi_{\llbracket m\rrbracket}^\bot(F_t^{N,m}-M^{N,m})\bigN_{L^2(\omega^{\otimes m})}^2
\,\le\,
Ce^{-2mc_0t}\bigN \pi_{\llbracket m\rrbracket}^\bot(F_\circ^{N,m}-M^{N,m})\bigN_{L^2(\omega^{\otimes m})}^2\\
+
C\kappa^2\int_0^tme^{-2mc_0(t-s)}\Big(\sum_{n=m-2}^{m+1}\bigN\pi_{\llbracket n\rrbracket}^\bot(F_s^{N,n}-M^{N,n})\bigN_{L^2(\omega^{\otimes n})}^2\Big)\,\ddr s.
\end{multline*}
From here, we conclude by a bootstrap argument. Given $C_0,T_0>0$, assume that we have for all $1\le m\le N$ and $t\in[0,T_0]$,
\begin{equation}\label{eq:bootstrap-ass}
\bigN\pi^\bot_{\llbracket m\rrbracket}(F^{N,\llbracket m\rrbracket}_t-M^{N,\llbracket m\rrbracket})\bigN_{L^2(\omega^{\otimes m})}^2 \le C_0^me^{-c_0t}.
\end{equation}
Inserting this into the above and using that $\int_0^tme^{-2mc_0(t-s)}e^{-c_0s}\ddr s\le c_0^{-1}e^{-c_0t}$, we would then deduce for all $1\le m\le N$ and $t\in[0,T_0]$,
\begin{multline*}
\bigN\pi_{\llbracket m\rrbracket}^\bot\big(F_t^{N,\llbracket m\rrbracket}-M^{N,\llbracket m\rrbracket}\big)\bigN_{L^2(\omega^{\otimes m})}^2
\,\le\,
Ce^{-2mc_0t}\bigN \pi_{\llbracket m\rrbracket}^\bot\big(F_\circ^{N,\llbracket m\rrbracket}-M^{N,\llbracket m\rrbracket}\big)\bigN_{L^2(\omega^{\otimes m})}^2\\
+
C\kappa^2(1+C_0+C_0^{-1}+C_0^{-2})C_0^me^{-c_0t}.
\end{multline*}
As we assume chaotic data~\eqref{eq:chaos-in}, $F_\circ^{N,m}=\mu_\circ^{\otimes m}$, the first right-hand side term is bounded by $CR^me^{-2mc_0t}$, for some constant $R$ depending on $d,\beta,K,A,\|\mu_\circ\|_{L^2(\omega)}$.
Choosing $C_0=4CR$, and $\kappa_0>0$ small enough so that $C\kappa_0^2(1+C_0+C_0^{-1}+C_0^{-2})\le\frac14$, we deduce that~\eqref{eq:bootstrap-ass} automatically holds with an additional prefactor $\frac12$ for all $1\le m\le N$, $t\in[0,T_0]$, and $\kappa\in[0,\kappa_0]$. We conclude that~\eqref{eq:bootstrap-ass} must hold with~$T_0=\infty$ for $\kappa\in[0,\kappa_0]$. Hence, by the equivalence of norms, for all $1\le m\le N$ and $t\ge0$,
\[\big\|\pi_{\llbracket m\rrbracket}^\bot(F_t^{N,\llbracket m\rrbracket}-M^{N,\llbracket m\rrbracket})\big\|_{L^2(\omega^{\otimes m})}\,\le\,C^me^{-c_0t/2}.\]
Finally, using an orthogonal expansion in form of
\[\|F_t^{N,m}-M^{N,m}\|_{L^2(\omega^{\otimes m})}^2=\sum_{P\subset\llbracket m\rrbracket}\|M_0\|_{L^2(\omega)}^{m-\sharp P}\big\|\pi_P^\bot(F_t^{N,P}-M^{N,P})\big\|_{L^2(\omega^{\otimes m})}^2,\]
the conclusion of Proposition~\ref{prop:correlations_L2_Langevin}(ii) follows (up to renaming $c_0$).
\end{proof}

\begin{rem}
The above proof holds unchanged if we only assume that initial data $F^{N,m}|_{t=0}=F^{N,m}_\circ$ satisfy $\|F^{N,m}_\circ\|_{L^2(\omega)}\le R^m$ for all $1\le m\le N$, for some $R>0$: in that case, the dependence on~$\|\mu_\circ\|_{L^2(\omega)}$ in the statement of Proposition~\ref{prop:correlations_L2_Langevin}(ii) would simply be replaced by dependence on this constant~$R$.
\end{rem}

\subsection{Correlation functions}
Recall that the two-particle correlation function $G^{N,2}$ is defined to capture the defect to propagation of chaos at the level of two-particle statistics, cf.~\eqref{eq:def-GN2}.
To describe finer corrections to mean field, we further consider higher-order correlation functions $\{G^{N,m}\}_{2\le m\le N}$, which are defined as suitable polynomial combinations of marginals of~$F^N$ in such a way that the full particle distribution~$F^N$ is recovered in form of a cluster expansion: using the notation of Definition~\ref{def:operator-L2-Langevin}, for all $1 \le m \le N$,
\begin{equation}\label{eq:cluster-exp0}
F^{N,m}\,=\,\sum_{\pi\vdash \llbracket m\rrbracket}\prod_{A\in\pi}G^{N,A},
\end{equation}
where $\pi$ runs through the list of all partitions of the index set $\llbracket m\rrbracket$, and where $A$ runs through the list of blocks of the partition $\pi$, $A\subset\llbracket m\rrbracket$.
As is easily checked, correlation functions are fully determined by prescribing~\eqref{eq:cluster-exp0} together with the `maximality' requirement
\begin{equation}\label{eq:integ-correl}
\int_{\Xd} G^{N,\llbracket m\rrbracket}\,\ddr z_\ell=0,\qquad\text{for all $1\le \ell \le m$ and $m\ge2$.}
\end{equation}
We find explicitly
\begin{eqnarray*}
G^{N,1}&=&F^{N,1},\\
G^{N,2}&=&F^{N,2}-(F^{N,1})^{\otimes2},\\
G^{N,3}&=&\Sym\big(F^{N,3}-3F^{N,2}\otimes F^{N,1}+2(F^{N,1})^{\otimes 3}\big),\\
G^{N,4}&=&\Sym\big(F^{N,4}-4F^{N,3}\otimes F^{N,1}-3F^{N,2}\otimes F^{N,2}+12F^{N,2}\otimes (F^{N,1})^{\otimes 2}-6(F^{N,1})^{\otimes 4} \big),
\end{eqnarray*}
where the symbol `$\Sym$' stands for the symmetrization of coordinates, and we can write more generally, for all $1\le m\le N$,
\begin{equation}\label{eq:def-cumGm}
G^{N,m}\,:=\,\sum_{\pi\vdash \llbracket m \rrbracket}(\sharp\pi-1)!(-1)^{\sharp\pi-1}\prod_{A\in\pi}F^{N,A},
\end{equation}
where we use a similar notation as in~\eqref{eq:cluster-exp0} and where $\sharp\pi$ stands for the number of blocks in a partition~$\pi$.
Similarly, we also define equilibrium correlation functions $\{C^{N,m}\}_{1\le m\le N}$ associated with~$M^N$ and its marginals.

\subsection{BBGKY hierarchy for correlations}
Starting from the BBGKY hierarchy~\eqref{eq:BBGKY_basic} for marginals of $F^N$, and recalling how marginals can be expanded in terms of correlations~\eqref{eq:cluster-exp0} and vice versa~\eqref{eq:def-cumGm}, we can derive a corresponding BBGKY hierarchy of equations for correlations.
We refer for instance to~\cite[Section~4]{Hess_Childs_2023} for the derivation: straightforward but lengthy algebraic manipulations are needed to collect the different factors in this hierarchy.

\begin{lem}[e.g.~\cite{Hess_Childs_2023}]\label{lem:hier-corr-Langevin}
Given $K\in L^\infty(\R^{2d})^d$, consider a weak solution $F^N\in C(\R_+;\Pc\cap\Ld^1(\Xd^N))$ of the Liouville equation~\eqref{eq:Liouville_Langevin}.
The associated correlation functions $\{G^{N,m}\}_{1 \le m\le N}$ satisfy the following hierarchy of equations: for all $1\le m\le N$,
\begin{multline*}
(\partial_t-L_0^{(m)})G^{N,m}
\,=\,\kappa\frac{N-m}{N} \sum_{k=1}^m H_k G^{N,\llbracket m \rrbracket \cup \{\ast\}}\\
- \kappa \sum_{k=1}^m \sum_{A \subset \llbracket m \rrbracket - \{k\}} \frac{m-1-\sharp A}{N} H_k\big(G^{N,A \cup \{k, \ast\}} G^{N,\llbracket m \rrbracket - \{k\} - A} \big)\\
+ \kappa \frac{N-m}{N} \sum_{k=1}^m \sum_{A \subset \llbracket m \rrbracket - \{k\}} H_k\big( G^{N,A \cup \{k\}} G^{N,\llbracket m \rrbracket \cup \{\ast\} - A - \{k\}} \big) \\
- \kappa \sum_{k=1}^m \sum_{A \subset \llbracket m\rrbracket - \{k\}} \sum_{B\subset \llbracket m\rrbracket - \{k\} - A} \frac{m-1-\sharp A-\sharp B}{N} H_k\big( G^{N,A \cup \{k\}} G^{N,B \cup \{*\}} G^{N,\llbracket m\rrbracket - A-B - \{k\}} \big) \\
+ \frac{\kappa}{N} \sum_{k\ne\ell}^m S_{k,\ell} G^{N,\llbracket m \rrbracket} + \frac{\kappa}{N} \sum_{k \ne \ell}^m \sum_{A \subset \llbracket m\rrbracket - \{k , \ell \}} S_{k, \ell}\big( G^{N,A \cup \{k\}} G^{N,\llbracket m\rrbracket - A - \{k\}}\big).
\end{multline*}
Letting $t \uparrow \infty$, equilibrium correlations $\{C^{N,m}\}_{1\le m\le N}$ are a stationary solution of this hierarchy.
\end{lem}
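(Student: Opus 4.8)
The statement is a purely algebraic consequence of the marginal hierarchy~\eqref{eq:BBGKY_basic} and the relations~\eqref{eq:cluster-exp0}--\eqref{eq:def-cumGm} between marginals and correlations; no analysis is involved beyond checking that all products $G^{N,A}G^{N,B}$ and all operators $H_k,S_{k,\ell}$ are well defined in the weak sense (routine, e.g.\ by first arguing for smooth data and passing to the limit, using the regularizing effect of the semigroup for $t>0$). The plan is: (i) record the Leibniz structure of $\partial_t-L_0^{(m)}$; (ii) feed in the Möbius inversion~\eqref{eq:def-cumGm} and then~\eqref{eq:BBGKY_basic}; (iii) re-expand the resulting marginals in correlations via~\eqref{eq:cluster-exp0} and collect; (iv) read off the equilibrium statement.

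For (i): since $L_0^{(m)}=\sum_{j=1}^m L_0^{[j]}$ with $L_0^{[j]}$ acting only on the $j$-th slot (cf.~\eqref{eq:def-L0m}), for any partition $\pi\vdash\llbracket m\rrbracket$ and any functions $g_A$ supported on the variables of $A$ one has
\[
(\partial_t-L_0^{(m)})\prod_{A\in\pi}g_A\;=\;\sum_{A_0\in\pi}\Big(\prod_{B\in\pi\setminus\{A_0\}}g_B\Big)\big(\partial_t-\textstyle\sum_{j\in A_0}L_0^{[j]}\big)g_{A_0},
\]
i.e.\ $\partial_t-L_0^{(m)}$ is a derivation on products of functions living on disjoint blocks of variables. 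Applying this to~\eqref{eq:def-cumGm} and inserting~\eqref{eq:BBGKY_basic} for each factor $(\partial_t-\sum_{j\in A_0}L_0^{[j]})F^{N,A_0}$ expresses $(\partial_t-L_0^{(m)})G^{N,m}$ as a finite sum, indexed by a partition $\pi\vdash\llbracket m\rrbracket$, a distinguished block $A_0\in\pi$, and one of the two source types of~\eqref{eq:BBGKY_basic} (the $H_k$-term, bringing in $F^{N,A_0\cup\{*\}}$ with the integrated dummy variable $*$, or the $S_{k,\ell}$-term in $F^{N,A_0}$), weighted by the Möbius coefficients $(\sharp\pi-1)!(-1)^{\sharp\pi-1}$ and by the scalings $\tfrac{N-\sharp A_0}{N}$, $\tfrac1N$ --- all remaining factors being marginals $F^{N,B}$, $B\in\pi\setminus\{A_0\}$.

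Step (iii) is the crux and the step I expect to be the main obstacle. One re-expands every marginal appearing above --- including $F^{N,A_0\cup\{*\}}$ --- through~\eqref{eq:cluster-exp0}, which turns the right-hand side into a large sum of monomials $\prod G^{N,\cdot}$; for each fixed target monomial one must then sum all the attached combinatorial weights. These sums collapse by the standard Möbius/cumulant cancellations on the partition lattice (of the type $\sum_{\pi\vdash S}(\sharp\pi-1)!(-1)^{\sharp\pi-1}=0$ for $\sharp S\ge2$, together with block-size--weighted refinements), and the output is exactly the six groups of terms in the statement, with coefficients $\tfrac{N-m}{N}$, $-\tfrac{m-1-\sharp A}{N}$, $\tfrac{N-m}{N}$, $-\tfrac{m-1-\sharp A-\sharp B}{N}$, $\tfrac1N$, $\tfrac1N$. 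Two structural observations organize the bookkeeping: the splitting $\tfrac{N-\sharp A_0}{N}=\tfrac{N-m}{N}+\tfrac{m-\sharp A_0}{N}$ separates the genuine gain terms (groups~1 and~3, prefactor $\tfrac{N-m}{N}$) from the finite-$N$ corrections (groups~2 and~4), whose numerators are the total sizes of the complementary blocks; and the maximality condition~\eqref{eq:integ-correl}, $\int_\Xd G^{N,P}\,\ddr z_\ell=0$ for $\sharp P\ge2$, removes (or folds into mean-field--type drifts) the monomials in which the dummy $*$ lies in an isolated singleton of the re-expansion. This is precisely the ``straightforward but lengthy'' computation of~\cite[Section~4]{Hess_Childs_2023}; an alternative is an induction on $m$ based on $G^{N,m}=F^{N,m}-\sum_{\pi\neq\{\llbracket m\rrbracket\}}\prod_{A\in\pi}G^{N,A}$, differentiating this identity and using~\eqref{eq:BBGKY_basic} for the $F^{N,m}$-term together with the Leibniz rule and the already-established equations for the $G^{N,n}$, $n<m$.

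Finally, step (iv) is immediate: the maps $\{F^{N,m}\}\leftrightarrow\{G^{N,m}\}$ are fixed polynomial identities, so applying them to the stationary marginals $\{M^{N,m}\}$ --- which solve~\eqref{eq:BBGKY_basic} with $\partial_t=0$ --- shows that $\{C^{N,m}\}$ solves the stationary version of the correlation hierarchy; equivalently one lets $t\uparrow\infty$ in the time-dependent hierarchy, using $F^{N,m}_t\to M^{N,m}$, hence $G^{N,m}_t\to C^{N,m}$ and $\partial_t G^{N,m}_t\to0$. Everything outside step~(iii) is formal; the only real work is keeping the nested partition sums straight so that the signs and the powers of $N$ recombine into the six stated groups.
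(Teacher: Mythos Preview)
Your proposal is correct and matches the paper's treatment: the paper does not supply a proof but simply cites~\cite[Section~4]{Hess_Childs_2023}, noting that the derivation amounts to ``straightforward but lengthy algebraic manipulations'' starting from~\eqref{eq:BBGKY_basic} and the relations~\eqref{eq:cluster-exp0}--\eqref{eq:def-cumGm}. Your outline---Leibniz rule for $\partial_t-L_0^{(m)}$ on block products, M\"obius inversion, re-expansion via~\eqref{eq:cluster-exp0}, and collapse of the partition-lattice sums---is exactly this computation, and your observations about the splitting $\tfrac{N-\sharp A_0}{N}=\tfrac{N-m}{N}+\tfrac{m-\sharp A_0}{N}$ and the role of~\eqref{eq:integ-correl} are the right structural bookkeeping devices.
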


\subsection{Uniform-in-time propagation of chaos and correlation estimates}
\label{sec:corr_Langevin}
Optimal uniform-in-time estimates for the propagation of chaos were first obtained in~\cite{Lacker-LeFlem-23} for $K$ bounded. Here we give a proof of Proposition~\ref{prop:correlations_L2_Langevin}(i), which further establishes uniform-in-time estimates on correlations, and we recover uniform-in-time propagation of chaos as a direct consequence.
The proof is an adaptation of the recent work~\cite{Xie-24} for the overdamped dynamics, further using here hypocoercivity in the same spirit as in Section~\ref{subsec:Gibbs_relax} above.

\begin{proof}[Proof of Proposition~\ref{prop:correlations_L2_Langevin}(i)]
We split the proof into three steps.

\medskip
\step1 Proof of~\eqref{eq:correl_strong_Langevin}.\\
Starting point is the BBGKY hierarchy of equations for correlations in Lemma~\ref{lem:hier-corr-Langevin}.
Starting from the equation for $G^{N,m}$, applying the projection $\pi_{\llbracket m\rrbracket}^\bot$ defined in~\eqref{eq:def-proj}, writing the Duhamel formula, and taking (modified) norms, we get for all $1\le m\le N$,
\begin{multline*}
\bigN\pi_{\llbracket m\rrbracket}^\bot G_t^{N,m}-e^{tL_0^{(m)}}\pi_{\llbracket m\rrbracket}^\bot G^{N,m}_\circ\bigN_{L^2(\omega^{\otimes m})}
\,=\,\kappa\biggN \int_0^te^{(t-s)L_0^{(m)}}\bigg\{\frac{N-m}{N} \sum_{k=1}^m \pi_{\llbracket m\rrbracket}^\bot H_k G_s^{N,\llbracket m \rrbracket \cup \{\ast\}}\\
- \sum_{k=1}^m \sum_{A \subset \llbracket m \rrbracket - \{k\}} \frac{m-1-\sharp A}{N} \pi_{\llbracket m\rrbracket}^\bot H_k\big(G_s^{N,A \cup \{k, \ast\}} G_s^{N,\llbracket m \rrbracket - \{k\} - A} \big)\\
+ \frac{N-m}{N} \sum_{k=1}^m \sum_{A \subset \llbracket m \rrbracket - \{k\}} \pi_{\llbracket m\rrbracket}^\bot H_k\big( G_s^{N,A \cup \{k\}} G_s^{N,\llbracket m \rrbracket \cup \{\ast\} - A - \{k\}} \big) \\
- \sum_{k=1}^m \sum_{A \subset \llbracket m\rrbracket - \{k\}} \sum_{B\subset \llbracket m\rrbracket - \{k\} - A} \frac{m-1-\sharp A-\sharp B}{N} \pi_{\llbracket m\rrbracket}^\bot H_k\big( G_s^{N,A \cup \{k\}} G_s^{N,B \cup \{*\}} G_s^{N,\llbracket m\rrbracket - A-B - \{k\}} \big) \\
+ \frac{1}{N} \sum_{k\ne\ell}^m \pi_{\llbracket m\rrbracket}^\bot S_{k,\ell} G_s^{N,\llbracket m \rrbracket}
+ \frac{1}{N} \sum_{k \ne \ell}^m \sum_{A \subset \llbracket m\rrbracket - \{k , \ell \}} \pi_{\llbracket m\rrbracket}^\bot S_{k, \ell}\big( G_s^{N,A \cup \{k\}} G_s^{N,\llbracket m\rrbracket - A - \{k\}}\big)\bigg\}\ddr s\biggN_{L^2(\omega^{\otimes m})}.
\end{multline*}
Now appealing to~\eqref{eq:estim-divgsum} to estimate the different terms similarly as in~\eqref{eq:estim-prod}, we are led to
\begin{multline*}
\bigN\pi_{\llbracket m\rrbracket}^\bot G_t^{N,m}-e^{tL_0^{(m)}}\pi_{\llbracket m\rrbracket}^\bot G^{N,m}_\circ\bigN_{L^2(\omega^{\otimes m})}^2
\,\lesssim\,\kappa^2\int_0^tme^{-2mc_0(t-s)}\3G_s^{N,m+1}\3_{L^2(\omega^{\otimes m+1})}^2\,\ddr s\\
+\kappa^2\int_0^tme^{-2mc_0(t-s)}\bigg(\sum_{\ell=0}^{m-1}\binom{m-1}{\ell} \frac{m-\ell-1}{N}\3G_s^{N,\ell+2}\3_{L^2(\omega^{\otimes \ell+2})} \3G_s^{N, m-\ell-1}\3_{L^2(\omega^{\otimes m-\ell-1})}\bigg)^2\,\ddr s\\
+\kappa^2\int_0^tme^{-2mc_0(t-s)}\bigg(\sum_{\ell=0}^{m-1}\binom{m-1}\ell\3G_s^{N,\ell+1}\3_{L^2(\omega^{\otimes \ell+1})}\3G_s^{N,m-\ell}\3_{L^2(\omega^{\otimes m-\ell})} \bigg)^2\,\ddr s\\
+\kappa^2\int_0^tme^{-2mc_0(t-s)}\bigg(\sum_{\ell=0}^{m-1}\sum_{r=0}^{m-\ell-1}\binom{m-1}{\ell,r}\frac{m-\ell-r-1}{N} \3G_s^{N,\ell+1}\3_{L^2(\omega^{\otimes\ell+1})}\\
\hspace{6cm}\times\3G_s^{N,r+1}\3_{L^2(\omega^{\otimes r+1})}\3G_s^{N,m-\ell-r-1}\3_{L^2(\omega^{\otimes m-\ell-r-1})} \bigg)^2\,\ddr s \\
+\kappa^2\frac{m^2}{N^2}\int_0^tme^{-2mc_0(t-s)}\3G_s^{N,m}\3_{L^2(\omega^{\otimes m})}^2\,\ddr s\\
+\kappa^2\frac{m^2}{N^2}\int_0^tme^{-2mc_0(t-s)}\bigg(\sum_{\ell=0}^{m-2}\binom{m-2}\ell \3G_s^{N,\ell+1}\3_{L^2(\omega^{\otimes \ell+1})}\3 G_s^{N,m-\ell-1}\3_{L^2(\omega^{\otimes m-\ell-1})}\bigg)^2\,\ddr s.
\end{multline*}
Again, the use of modified norms allows to avoid $C^m$ prefactors. Note that the definition of correlation functions ensures $\pi_{\llbracket m\rrbracket}^\bot G^{N,m}=G^{N,m}$ for $2\le m\le N$, cf.~\eqref{eq:integ-correl}, and, of course, for $m=1$,
\[\pi_1^\bot G^{N,1}_t-e^{tL_0}\pi_1^\bot G_\circ^{N,1}=F^{N,1}_t-e^{tL_0}\mu_\circ=G^{N,1}_t-e^{tL_0}G_\circ^{N,1}.\]
Further using $\int_0^tme^{-2mc_0(t-s)}\,\ddr s\le c_0^{-1}$, and reorganizing the combinatorial factors, we obtain
\begin{multline}\label{eq:pre-est-GNm}
\bigN G_t^{N,m}-e^{tL_0^{(m)}} G^{N,m}_\circ\bigN_{L^2(\omega^{\otimes m})}
\,\lesssim\,\kappa\sup_{[0,t]}\3G^{N,m+1}\3_{L^2(\omega^{\otimes m+1})}\\
+\kappa\sum_{\ell=1}^{m}\binom{m-1}{\ell-1}\sup_{[0,t]}\3G^{N,\ell}\3_{L^2(\omega^{\otimes \ell})}\3G^{N,m+1-\ell}\3_{L^2(\omega^{\otimes m+1-\ell})}\\
+\frac{\kappa m}{N}\sum_{\ell=1}^{m-1}\sum_{r=1}^{m-\ell}\binom{m-2}{\ell-1,r-1} \sup_{[0,t]}\3G^{N,\ell}\3_{L^2(\omega^{\otimes\ell})}\3G^{N,r}\3_{L^2(\omega^{\otimes r})}\3G^{N,m+1-\ell-r}\3_{L^2(\omega^{\otimes m+1-\ell-r})}\\
+\frac{\kappa m}{N}\sup_{[0,t]}\3G^{N,m}\3_{L^2(\omega^{\otimes m})}
+\frac{\kappa m}{N}\sum_{\ell=1}^{m-1}\binom{m-2}{\ell-1}\sup_{[0,t]}\3G^{N,\ell}\3_{L^2(\omega^{\otimes \ell})}\3 G^{N,m-\ell}\3_{L^2(\omega^{\otimes m-\ell})}.
\end{multline}
From here, we conclude by a bootstrap argument as in~\cite{Xie-24}. Given $C_0,T_0>0$, assuming that we have for all $1\le m\le N$ and $t\in[0,T_0]$,
\begin{equation}\label{eq:bootstrap-correl}
\3G_t^{N,m}\3_{L^2(\omega^{\otimes m})}\,\le\,\frac{C_0(m-1)!}{m^2}N^{1-m},
\end{equation}
the above would then yield
\begin{multline*}
\bigN G^{N,m}_t-e^{tL_0^{(m)}}G^{N,m}_\circ\bigN_{L^2(\omega^{\otimes m})}
\,\lesssim\,\kappa\frac{C_0(m-1)!}{m^2}N^{1-m}\bigg(
\frac{m}{N}
+C_0\sum_{\ell=1}^{m}\frac{m^2}{\ell^2(m+1-\ell)^2}\\
+C_0\frac{m}{m-1}\sum_{\ell=1}^{m-1}\frac{m^2}{\ell^2(m-\ell)^2}
+C_0^2 \frac{m}{m-1} \sum_{\ell=1}^{m-1}\sum_{r=1}^{m-\ell}\frac{m^2}{\ell^2r^2(m+1-\ell-r)^2}\bigg),
\end{multline*}
and thus, after a direct estimation of the different sums,
\begin{equation*}
\bigN G^{N,m}_t-e^{tL_0^{(m)}}G^{N,m}_\circ\bigN_{L^2(\omega^{\otimes m})}
\,\lesssim\,\kappa(1+C_0+C_0^2)\frac{C_0(m-1)!}{m^2}N^{1-m}.
\end{equation*}
Starting from chaotic data~\eqref{eq:chaos-in}, this yields
\begin{equation*}
\bigN G^{N,m}_t\bigN_{L^2(\omega^{\otimes m})}
\,\le\,C\mathds1_{\{m=1\}}\|\mu_\circ\|_{L^2(\omega)}+C\kappa(1+C_0+C_0^2)\frac{C_0(m-1)!}{m^2}N^{1-m}.
\end{equation*}
Choosing $C_0:=4C\|\mu_\circ\|_{L^2(\omega)}$ and $\kappa_0$ small enough so that $C\kappa_0(1+C_0+C_0^2)<\frac14$, we deduce that~\eqref{eq:bootstrap-correl} automatically holds with an additional prefactor $\frac12$ for all $1\le m\le N$, $t\in[0,T_0]$, and $\kappa\in[0,\kappa_0]$.
We conclude that~\eqref{eq:bootstrap-correl} must hold with $T_0=\infty$ for $\kappa\in[0,\kappa_0]$. Hence, by the equivalence of the norms, the desired correlation estimates~\eqref{eq:correl_strong_Langevin} follow.

\medskip 
\step2 Proof of propagation of chaos~\eqref{eq:prop-est-Langevin} for $m=1$.\\
Comparing the BBGKY equation~\eqref{eq:BBGKY_basic} for $F^{N,1}$ and the mean-field equation~\eqref{eq:VFP} for $\mu$, we find
\[(\partial_t-L_0)(F^{N,1}-\mu)\,=\,\kappa H_1\Big(\frac{N-1}NF^{N,\{1\}}F^{N,\{*\}}-\mu(z_1)\mu(z_*)\Big)+\kappa\frac{N-1}NH_1G^{N,\{1\}\cup\{*\}}.\]
By the Duhamel formula with $F^{N,1}|_{t=0}=\mu_\circ$, we deduce
\begin{multline*}
\|F^{N,1}_t-\mu_t\|_{L^2(\omega)}\,\le\,\kappa \bigg\|\int_0^te^{(t-s)L_0}H_1\Big(\frac{N-1}NF_s^{N,\{1\}}F_s^{N,\{*\}}-\mu_s(z_1)\mu_s(z_*)\Big)\ddr s\bigg\|_{L^2(\omega)}\\
+\kappa\Big\|\int_0^te^{(t-s)L_0}H_1G^{N,\{1\}\cup\{*\}}_s\,\ddr s\Big\|_{L^2(\omega)},
\end{multline*}
and thus, recalling the definition of the operator $H_1$ and appealing to~\eqref{eq:estimL-dual},
\begin{multline*}
\|F^{N,1}_t-\mu_t\|_{L^2(\omega)}^2\,\lesssim\,\kappa^2\int_0^te^{-2c_0(t-s)} \Big\|\frac{N-1}N(F_s^{N,1})^{\otimes2}-\mu_s^{\otimes2}\Big\|^2_{L^2(\omega^{\otimes2})}\,\ddr s\\
+\kappa^2\int_0^te^{-2c_0(t-s)}\|G^{N,2}_s\|_{L^2(\omega^{\otimes2})}^2\,\ddr s.
\end{multline*}
Using the bound~\eqref{eq:correl_strong_Langevin} on correlations, as proven in Step~1, we deduce
\begin{equation*}
\|F^{N,1}_t-\mu\|_{L^2(\omega)}\,\lesssim\,N^{-1}+\kappa\sup_{[0,t]}\|F^{N,1}-\mu\|_{L^2(\omega)}.
\end{equation*}
For $\kappa$ small enough, the last term can be absorbed in the left-hand side and the conclusion~\eqref{eq:prop-est-Langevin} follows for $m=1$.

\medskip
\step3 Proof of propagation of chaos~\eqref{eq:prop-est-Langevin} for all $1 \le m \le N$.\\
Starting point is the cluster expansion~\eqref{eq:cluster-exp0}. Singling out the term corresponding to the partition $\pi=\{\{1\},\ldots,\{m\}\}$ in this expansion, we can write
\begin{equation}\label{eq:expand-FNm-mum}
F^{N,m}-\mu^{\otimes m}
\,=\,
(F^{N,1})^{\otimes m}-\mu^{\otimes m}
+\sum_{\pi\vdash\llbracket m\rrbracket\atop\sharp\pi<m}\prod_{A\in\pi}G^{N,A}.
\end{equation}
Rather than directly applying the correlation estimates~\eqref{eq:correl_strong_Langevin} proven in Step~1, some care is needed to ensure a merely exponential dependence on $m$ in~\eqref{eq:prop-est-Langevin}.
For that purpose, we first note that
\begin{equation}\label{eq:red-bnd}
m^mN^{1-m}\le 4e^{m}N^{-1},\qquad\text{for $2\le m\le N$,}
\end{equation}
so that the correlation estimates~\eqref{eq:correl_strong_Langevin} yield in particular
\[\|G^{N,m}_t\|_{L^2(\omega^{\otimes m})}\,\le\, C^mN^{-1},\qquad \text{for $2\le m\le N$}.\]
Now using this together with the result~\eqref{eq:prop-est-Langevin} for $m=1$ as proven in Step~2, the conclusion~\eqref{eq:prop-est-Langevin} follows from~\eqref{eq:expand-FNm-mum} for all $m$.
\end{proof}

Letting $t\uparrow\infty$, combining the above result with the convergence of $F^N$ (resp.~$\mu$) to equilibrium~$M^N$ (resp.~$M$) from Proposition~\ref{prop:correlations_L2_Langevin}~(ii) (resp. Lemma~\ref{lem:ergodic-strong2}), we obtain the following corresponding estimates on equilibrium correlations.

\begin{cor}\label{cor:correl-Langevin} 
Let $K \in L^\infty(\R^{2d})^d$ and let $A$ satisfy~\eqref{eq:confinement-A}. There exist $\kappa_0,C > 0$ (only depending on $d, \beta, K,A$) such that given $\kappa \in [0,\kappa_0]$ we have for all $1\le m\le N$,
\begin{eqnarray*}
\|C^{N,m}\|_{L^2(\omega^{\otimes m})}&\le&(Cm)^mN^{1-m},\\
\|M^{N,m}-M^{\otimes m}\|_{L^2(\omega^{\otimes m})}&\le&C^mN^{-1}.
\end{eqnarray*}
\end{cor}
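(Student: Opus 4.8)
The plan is to deduce both bounds by letting $t\uparrow\infty$ in the uniform-in-time estimates of Proposition~\ref{prop:correlations_L2_Langevin}(i), using Proposition~\ref{prop:correlations_L2_Langevin}(ii) and Lemma~\ref{lem:ergodic-strong2}(i) to identify the limiting objects. First I would record that, by Proposition~\ref{prop:correlations_L2_Langevin}(ii), for each fixed $m$ the marginal $F^{N,m}_t$ converges to $M^{N,m}$ in $L^2(\omega^{\otimes m})$ as $t\uparrow\infty$, while by Lemma~\ref{lem:ergodic-strong2}(i) we have $\mu_t\to M$ in $L^2(\omega)$.

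Next I would promote these convergences through the tensor products and linear combinations entering the cluster expansion. Using the factorization $\|f\otimes g\|_{L^2(\omega^{\otimes(m+n)})}=\|f\|_{L^2(\omega^{\otimes m})}\|g\|_{L^2(\omega^{\otimes n})}$, the elementary identity $f_t\otimes g_t-f_\infty\otimes g_\infty=(f_t-f_\infty)\otimes g_t+f_\infty\otimes(g_t-g_\infty)$, and the uniform-in-time a priori bounds on $\|F^{N,m}_t\|_{L^2(\omega^{\otimes m})}$ (immediate from Proposition~\ref{prop:correlations_L2_Langevin}) and on $\|\mu_t\|_{L^2(\omega)}$ (namely~\eqref{eq:apriori-mu}), one gets that each product $\prod_{A\in\pi}F^{N,A}_t$ appearing in the definition~\eqref{eq:def-cumGm} of $G^{N,m}_t$ converges in $L^2(\omega^{\otimes m})$ to $\prod_{A\in\pi}M^{N,A}$ as $t\uparrow\infty$. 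Hence $G^{N,m}_t\to C^{N,m}$ and $\mu_t^{\otimes m}\to M^{\otimes m}$ in $L^2(\omega^{\otimes m})$, where $C^{N,m}$ is precisely the corresponding polynomial combination of the $\{M^{N,A}\}$, i.e.\@ the equilibrium correlation function (consistently, it is the stationary solution of the correlation hierarchy of Lemma~\ref{lem:hier-corr-Langevin}); likewise $F^{N,m}_t-\mu_t^{\otimes m}\to M^{N,m}-M^{\otimes m}$.

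Finally, since the $L^2(\omega^{\otimes m})$ norm is continuous under $L^2(\omega^{\otimes m})$ convergence, I would simply pass to the limit in the bounds of Proposition~\ref{prop:correlations_L2_Langevin}(i): from $\|G^{N,m}_t\|_{L^2(\omega^{\otimes m})}\le(Cm)^mN^{1-m}$ for all $t\ge0$ we obtain $\|C^{N,m}\|_{L^2(\omega^{\otimes m})}\le(Cm)^mN^{1-m}$, and from $\|F^{N,m}_t-\mu_t^{\otimes m}\|_{L^2(\omega^{\otimes m})}\le C^mN^{-1}$ we obtain $\|M^{N,m}-M^{\otimes m}\|_{L^2(\omega^{\otimes m})}\le C^mN^{-1}$. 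There is no genuine obstacle here: the only point requiring a little care is that $L^2$-convergence of the marginals is stable under the tensor products and combinations of the cluster expansion, which is exactly what the product identity above together with the uniform-in-time bounds deliver; everything else is immediate.
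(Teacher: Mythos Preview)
Your proposal is correct and follows essentially the same route as the paper, which derives the corollary by letting $t\uparrow\infty$ in Proposition~\ref{prop:correlations_L2_Langevin}(i) and using Proposition~\ref{prop:correlations_L2_Langevin}(ii) together with Lemma~\ref{lem:ergodic-strong2}(i) to identify the limits. The only detail you (and the paper) leave implicit is that, since the equilibrium quantities $C^{N,m},M^{N,m},M$ do not depend on $\mu_\circ$, one may fix any convenient $\mu_\circ\in\Pc(\Xd)\cap L^2(\omega)$ (for instance $\mu_\circ=M_0$) so that the constants $\kappa_0,C$ inherited from Proposition~\ref{prop:correlations_L2_Langevin} indeed depend only on $d,\beta,K,A$.
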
\smallskip

\section{Proof of main results}\label{sec:thm-Langevin}
This section is devoted to the proof of our main result, Theorem~\ref{thm:main-Langevin}. 
Henceforth, let $K,A$ be as in~\eqref{eq:KLinfty}--\eqref{eq:confinement-A}, let $\mu_\circ \in \Pc(\Xd)\cap L^2(\omega)$,
consider the unique global weak solution $F^N\in C(\R_+; \mathcal{P}(\Xd^N) \cap L^2(\omega^{\otimes N}))$ of the Liouville equation~\eqref{eq:Liouville_Langevin} with tensorized data~\eqref{eq:chaos-in}, and the unique weak solution $\mu \in C(\R_+; \mathcal{P}(\Xd) \cap L^2(\omega))$ of~\eqref{eq:VFP}.

\subsection{Relaxation of correlations}
We start with the following optimal estimates on the exponential relaxation to equilibrium for correlation functions, already proving the second part~\eqref{eq:cross-err-cor} of Theorem~\ref{thm:main-Langevin}.

\begin{prop}\label{prop:GGtildeCCtilde-Langevin}
There exist $c_0 > 0$ (only depending on $d, \beta, K,A$) and $\kappa_0>0$ (further depending on~$\|\mu_\circ\|_{L^2(\omega)}$) such that given $\kappa \in [0, \kappa_0]$ we have for all $1\le m \le N$ and $t \ge 0$,
\begin{equation*}
\| G^{N,m}_t -  C^{N,m}\|_{L^2(\omega^{\otimes m})} \,\le\,(Cm)^m N^{1-m} e^{-c_0t}.
\end{equation*}
\end{prop}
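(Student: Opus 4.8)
The plan is to follow the proof of Proposition~\ref{prop:correlations_L2_Langevin}(i) almost verbatim, now applied to the differences $D^{N,m}:=G^{N,m}-C^{N,m}$ rather than to the correlation functions themselves. Since the equilibrium correlations $\{C^{N,m}\}_{1\le m\le N}$ form a stationary solution of the correlation hierarchy of Lemma~\ref{lem:hier-corr-Langevin}, subtracting that stationary hierarchy from the evolution hierarchy for $\{G^{N,m}\}$ and telescoping every product via $G^{N,A}G^{N,B}-C^{N,A}C^{N,B}=G^{N,A}D^{N,B}+D^{N,A}C^{N,B}$ (and its analogue for triple products) yields a closed hierarchy for $\{D^{N,m}\}$ whose structure is identical to that of Lemma~\ref{lem:hier-corr-Langevin}, except that it is \emph{linear} in $D$: its right-hand side is the leading transport term $\kappa\frac{N-m}{N}\sum_k H_kD^{N,\llbracket m\rrbracket\cup\{*\}}$, the term $\frac\kappa N\sum_{k\ne\ell}S_{k,\ell}D^{N,\llbracket m\rrbracket}$, plus a collection of terms each of which carries exactly one factor $D^{N,\cdot}$ together with remaining factors of type $G^{N,\cdot}$ or $C^{N,\cdot}$, all with the same combinatorial prefactors. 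Because the initial data is chaotic~\eqref{eq:chaos-in}, here $D^{N,m}_\circ=-C^{N,m}$ for $m\ge2$ --- already of size $(Cm)^mN^{1-m}$ by Corollary~\ref{cor:correl-Langevin} --- while $D^{N,1}_\circ=\mu_\circ-M^{N,1}$.

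First I would dispose of the case $m=1$: the bound $\|D^{N,1}_t\|_{L^2(\omega)}=\|F^{N,1}_t-M^{N,1}\|_{L^2(\omega)}\le Ce^{-c_0t}$ is exactly Proposition~\ref{prop:correlations_L2_Langevin}(ii) for $m=1$. For $m\ge2$, I would apply the projection $\pi^\bot_{\llbracket m\rrbracket}$ --- which acts as the identity on $D^{N,m}$, since both $G^{N,m}$ and $C^{N,m}$ satisfy the maximality condition~\eqref{eq:integ-correl}; note moreover that $\pi^\bot_kD^{N,n}=D^{N,n}$ for \emph{every} $n\ge1$, as even $D^{N,1}=F^{N,1}-M^{N,1}$ has zero integral in each variable --- then write the Duhamel formula, pass to the modified norms $\3\cdot\3$, and invoke the tensorized hypocoercivity estimate~\eqref{eq:estim-divgsum}, exactly as in the derivation of~\eqref{eq:pre-est-GNm}. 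This produces
\[
\3D^{N,m}_t\3_{L^2(\omega^{\otimes m})}\,\lesssim\,e^{-mc_0t}\3 D^{N,m}_\circ\3_{L^2(\omega^{\otimes m})}+\kappa\int_0^t m\,e^{-2mc_0(t-s)}\,Q_m(s)\,\ddr s,
\]
where $Q_m(s)$ denotes the sum of products in which exactly one factor is some $\3D^{N,\cdot}_s\3$ and the remaining factors are of type $\3G^{N,\cdot}_s\3$ or $\3C^{N,\cdot}\3$, with the same combinatorial prefactors (binomials, $\frac{m-1-\sharp A}{N}$, etc.) as in~\eqref{eq:pre-est-GNm}.

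Then I would run the bootstrap of Proposition~\ref{prop:correlations_L2_Langevin}(i) with the modified Ansatz $\3D^{N,m}_t\3_{L^2(\omega^{\otimes m})}\le\frac{C_0(m-1)!}{m^2}N^{1-m}e^{-c_0t}$ for all $1\le m\le N$ and $t\in[0,T_0]$ (the case $m=1$ being granted by the previous paragraph). Inserting this together with the uniform-in-time bounds~\eqref{eq:correl_strong_Langevin} for the $G^{N,\cdot}$ factors and the bounds of Corollary~\ref{cor:correl-Langevin} for the $C^{N,\cdot}$ factors, bounding the homogeneous term by $(Cm)^mN^{1-m}e^{-mc_0t}\le(Cm)^mN^{1-m}e^{-c_0t}$, and using the elementary identity $\int_0^t m\,e^{-2mc_0(t-s)}e^{-c_0s}\,\ddr s\le c_0^{-1}e^{-c_0t}$ (since $\frac{m}{2m-1}\le1$), the very same combinatorial sum estimates as in Proposition~\ref{prop:correlations_L2_Langevin}(i) close the bootstrap for $\kappa$ small enough and $C_0$ large enough; hence $T_0=\infty$, and reverting to the standard norms via~\eqref{eq:equiv-norm}--\eqref{eq:equi-norm-m} gives the claim (up to renaming $c_0$).

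The main obstacle is bookkeeping rather than conceptual: one must check that, when each product term of the $D$-hierarchy is split as one decaying factor ($\3D^{N,\cdot}_s\3$, carrying the $e^{-c_0s}$ from the Ansatz) times non-decaying factors (controlled by $(Cm)^mN^{1-m}$), the combinatorics still collapses --- as it did in~\eqref{eq:pre-est-GNm} --- to a convergent series in $C_0$ times $N^{1-m}$ times a \emph{single} factor $e^{-c_0t}$, with all the $m$-dependence absorbed into the factorial prefactor thanks to the injective cross norms~\eqref{eq:equi-norm-m} (which keep the hypocoercivity constant in~\eqref{eq:estim-divgsum} independent of $m$). Threading the exponential weight $e^{-c_0s}$ through the Duhamel integrals without degrading its rate is precisely what the computation $\int_0^t m\,e^{-2mc_0(t-s)}e^{-c_0s}\,\ddr s\le c_0^{-1}e^{-c_0t}$ takes care of, so that no new analytic ingredient beyond those in Sections~\ref{subsec:Gibbs_relax}--\ref{sec:corr_Langevin} is needed.
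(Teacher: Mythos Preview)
Your approach is correct and essentially identical to the paper's: subtract the stationary correlation hierarchy, telescope the products, handle $m=1$ via Proposition~\ref{prop:correlations_L2_Langevin}(ii), run Duhamel with~\eqref{eq:estim-divgsum} and the weighted time-integral identity, and close by bootstrap for small~$\kappa$. The only caveat is a bookkeeping mismatch: your Ansatz $\tfrac{C_0(m-1)!}{m^2}N^{1-m}e^{-c_0t}$ will not close against the \emph{stated} $(Cm)^m$-bounds from~\eqref{eq:correl_strong_Langevin} and Corollary~\ref{cor:correl-Langevin} (the mixed terms then pick up an uncontrolled exponential factor in~$m$), so either use the sharper bound $\tfrac{C_1(m-1)!}{m^2}N^{1-m}$ that the \emph{proof} of Proposition~\ref{prop:correlations_L2_Langevin}(i) actually establishes in modified norms for both $G^{N,\cdot}$ and $C^{N,\cdot}$, or --- as the paper does --- relax the Ansatz to $m!\,C_0^m\,N^{1-m}e^{-c_0t}$ and choose $C_0$ larger than the fixed constant from the correlation bounds.
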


\begin{proof}
First note that the case $m =1$ follows from uniform-in-$N$ Gibbs relaxation in Proposition~\ref{prop:correlations_L2_Langevin}(ii).
We now consider $2 \le m \le N$.
Starting from the equation for $G^{N,m}$ in Lemma~\ref{lem:hier-corr-Langevin}, taking the difference with the corresponding stationary equation for $C^{N,m}$, and repeating the estimates leading to~\eqref{eq:pre-est-GNm}, now rather using that for $m\ge2$ we have
\[e^{2c_0t}\int_0^tme^{-2mc_0(t-s)}h(s)\ddr s\,\le\, c_0^{-1}\sup_{0\le s\le t}e^{2c_0s}h(s),\]
we obtain for all $2\le m\le N$ and $t\ge0$,
\begin{multline*}
e^{c_0t}\bigN G_t^{N,m}-C^{N,m}\bigN_{L^2(\omega^{\otimes m})}
\,\lesssim\,
e^{c_0t}\bigN e^{tL_0^{(m)}}(G^{N,m}_\circ-C^{N,m})\BigN_{L^2(\omega^{\otimes m})}\\
+\kappa\sup_{0\le s\le t}e^{c_0s}\bigN G^{N,m+1}_s-C^{N,m+1}\bigN_{L^2(\omega^{\otimes m+1})}\\
+\kappa\sum_{\ell=1}^{m}\binom{m-1}{\ell-1}\sup_{0\le s\le t}e^{c_0s}\bigN G_s^{N,\ell}\otimes G_s^{N,m+1-\ell}-C^{N,\ell}\otimes C^{N,m+1-\ell}\bigN_{L^2(\omega^{\otimes m+1})}\\
+\frac{\kappa m}{N}\sum_{\ell=1}^{m-1}\sum_{r=1}^{m-\ell}\binom{m-2}{\ell-1,r-1} \sup_{0\le s\le t}e^{c_0s}\bigN G_s^{N,\ell}\otimes G_s^{N,r}\otimes G_s^{N,m+1-\ell-r}-C^{N,\ell}\otimes C^{N,r}\otimes C^{N,m+1-\ell-r}\bigN_{L^2(\omega^{\otimes m+1})}\\
+\frac{\kappa m}{N}\sup_{0\le s\le t}e^{c_0s}\bigN G_s^{N,m}-C^{N,m}\bigN_{L^2(\omega^{\otimes m})}\\
+\frac{\kappa m}{N}\sum_{\ell=1}^{m-1}\binom{m-2}{\ell-1}\sup_{0\le s\le t}e^{c_0s}\bigN G_s^{N,\ell}\otimes G_s^{N,m-\ell}-C^{N,\ell}\otimes C^{N,m-\ell}\bigN_{L^2(\omega^{\otimes m})}.
\end{multline*}
Combining this with the correlation estimates of Proposition~\ref{prop:correlations_L2_Langevin}(i) and Corollary~\ref{cor:correl-Langevin}, we get
\begin{multline*}
e^{c_0t}\bigN G_t^{N,m}-C^{N,m}\bigN_{L^2(\omega^{\otimes m})}
\,\lesssim\,
e^{c_0t}\bigN e^{tL_0^{(m)}}(G^{N,m}_\circ-C^{N,m})\BigN_{L^2(\omega^{\otimes m})}\\
+\kappa\sup_{0\le s\le t}e^{c_0s}\bigN G_s^{N,m+1}-C^{N,m+1}\bigN_{L^2(\omega^{\otimes m+1})}\\
+\kappa\sum_{\ell=1}^{m}\binom{m-1}{\ell-1}(C\ell)^{\ell}N^{1-\ell}\sup_{0\le s\le t}e^{c_0s}\bigN G_s^{N,m+1-\ell}-C^{N,m+1-\ell}\bigN_{L^2(\omega^{\otimes m+1-\ell})}\\
+\frac{\kappa m}{N}\sum_{\ell=1}^{m-1}\sum_{r=1}^{m-\ell}\binom{m-2}{\ell-1,r-1}(C\ell)^\ell(Cr)^rN^{2-\ell-r} \sup_{0\le s\le t}e^{c_0s}\bigN G_s^{N,m+1-\ell-r}-C^{N,m+1-\ell-r}\bigN_{L^2(\omega^{\otimes m+1-\ell-r})}\\
+\frac{\kappa m}N\sup_{0\le s\le t}e^{c_0s}\bigN G_s^{N,m}-C^{N,m}\bigN_{L^2(\omega^{\otimes m})}\\
+\frac{\kappa m}{N}\sum_{\ell=1}^{m-1}\binom{m-2}{\ell-1}(C\ell)^\ell N^{1-\ell}\sup_{0\le s\le t}e^{c_0s}\bigN G_s^{N,m-\ell}-C^{N,m-\ell}\bigN_{L^2(\omega^{\otimes m-\ell})}.
\end{multline*}
Note that both in the third and fifth lines the norm of $G^{N,m}-C^{N,m}$ appears with a prefactor~$O(\kappa)$ and can thus be absorbed in the left-hand side for $\kappa$ small enough. Also recall that the result is already known to hold for $m=1$, which allows to get rid of some other terms. Further using Lemma~\ref{lem:ergodic-strong2}(ii) for the damping of initial data, recalling $G^{N,m}_\circ=0$, and using again Corollary~\ref{cor:correl-Langevin}, we are then led to
\begin{multline*}
\sup_{0\le s\le t}e^{c_0s}\bigN G_s^{N,m}-C^{N,m}\bigN_{L^2(\omega^{\otimes m})}
\,\lesssim\,
(Cm)^mN^{1-m}
+\kappa\sup_{0\le s\le t}e^{c_0s}\bigN G_s^{N,m+1}-C^{N,m+1}\bigN_{L^2(\omega^{\otimes m+1})}\\
+\kappa\sum_{\ell=2}^{m-1}\binom{m-1}{\ell-1}(C\ell)^{\ell}N^{1-\ell}\sup_{0\le s\le t}e^{c_0s}\bigN G_s^{N,m+1-\ell}-C^{N,m+1-\ell}\bigN_{L^2(\omega^{\otimes m+1-\ell})}\\
+\frac{\kappa m}{N}\sum_{\ell=1}^{m-1}\sum_{r=1}^{m-\ell-1}\binom{m-2}{\ell-1,r-1}(C\ell)^\ell(Cr)^rN^{2-\ell-r} \sup_{0\le s\le t}e^{c_0s}\bigN G_s^{N,m+1-\ell-r}-C^{N,m+1-\ell-r}\bigN_{L^2(\omega^{\otimes m+1-\ell-r})}.
\end{multline*}
From here, we conclude by a bootstrap argument.
Given $C_0,T_0>0$, assuming that we have for all $2\le m\le N$ and $t\in[0,T_0]$,
\begin{equation}\label{eq:relax-correl-boots}
\3G_t^{N,m}-C^{N,m}\3_{L^2(\omega^{\otimes m})}\,\le\,m!C_0^mN^{1-m}e^{-c_0t},
\end{equation}
the above would then yield, together with Stirling's formula, for all $2\le m\le N$ and $t\in[0,T_0]$,
\begin{equation*}
\bigN G_s^{N,m}-C^{N,m}\bigN_{L^2(\omega^{\otimes m})}
\,\le\,
m!C_0^mN^{1-m}e^{-c_0t}\Big((C/C_0)^m
+CC_0\kappa
+CC_0\kappa \sum_{\ell=1}^{m-1}\sum_{r=1}^{m-\ell-1}(C/C_0)^{\ell+r}\Big).
\end{equation*}
Choosing $C_0=2C$, noting that the double sum is then bounded by $\sum_{\ell,r\ge1}(C/C_0)^{\ell+r}=1$, and choosing~$\kappa_0$ small enough so that $2CC_0\kappa_0<\frac14$, we deduce that~\eqref{eq:relax-correl-boots} automatically holds with an additional prefactor $\frac12$ for all $2\le m\le N$, $t\in[0,T_0]$, and $\kappa\in[0,\kappa_0]$. We conclude that~\eqref{eq:relax-correl-boots} must hold with $T_0=\infty$ for $\kappa\in[0,\kappa_0]$. By the equivalence of the norms, the conclusion follows.
\end{proof}

\subsection{Cross error estimates for many-particle densities}
The following result shows that it suffices to prove the cross error estimate~\eqref{eq:cross-err} in Theorem~\ref{thm:main-Langevin} for $m=1$. This is obtained as a direct corollary of the above relaxation estimates for correlations, cf.~Proposition~\ref{prop:GGtildeCCtilde-Langevin}.

\begin{prop}\label{prop:higher_order_marginals_strong}
There exist $c_0 > 0$ (only depending on $d, \beta, K,A$) and $\kappa_0>0$ (further depending on~$\|\mu_\circ\|_{L^2(\omega)}$) such that given $\kappa \in [0, \kappa_0]$ we have for all $2\le m \le N$ and $t\ge0$,
\begin{equation*}
\|F^{N,m}_t - \mu_t^{\otimes m} - M^{N,m} + M^{\otimes m} \|_{L^2(\omega^{\otimes m})} \,\le\, C^mN^{-1}e^{-c_0t}+C^m\|F_t^{N,1}-\mu_t-M^{N,1}+M\|_{L^2(\omega)}.
\end{equation*}
\end{prop}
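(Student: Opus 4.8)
The plan is to combine the cluster expansion~\eqref{eq:cluster-exp0} with the sharp relaxation estimates for correlations in Proposition~\ref{prop:GGtildeCCtilde-Langevin}, after isolating the purely one-particle contribution. Expanding both $F^{N,m}$ and $M^{N,m}$ via~\eqref{eq:cluster-exp0} and singling out the term of the trivial partition $\{\{1\},\ldots,\{m\}\}$ (which produces $(F^{N,1})^{\otimes m}$, resp.\ $(M^{N,1})^{\otimes m}$), exactly as in~\eqref{eq:expand-FNm-mum}, I would write
\[
F^{N,m}_t-\mu_t^{\otimes m}-M^{N,m}+M^{\otimes m}\,=\,T_{\mathrm I}+T_{\mathrm{II}},
\]
with $T_{\mathrm I}:=\big((F^{N,1}_t)^{\otimes m}-\mu_t^{\otimes m}\big)-\big((M^{N,1})^{\otimes m}-M^{\otimes m}\big)$ and $T_{\mathrm{II}}:=\sum_{\pi\vdash\llbracket m\rrbracket,\,\sharp\pi<m}\big(\prod_{A\in\pi}G^{N,A}_t-\prod_{A\in\pi}C^{N,A}\big)$. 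Along the way I would freely use the uniform a priori bounds $\|\mu_t\|_{L^2(\omega)},\|M\|_{L^2(\omega)},\|F^{N,1}_t\|_{L^2(\omega)},\|M^{N,1}\|_{L^2(\omega)}\le C$ (from~\eqref{eq:apriori-mu}, from Proposition~\ref{prop:correlations_L2_Langevin}(i) at $m=1$, and from Corollary~\ref{cor:correl-Langevin}), the relaxation bounds $\|\mu_t-M\|_{L^2(\omega)}+\|F^{N,1}_t-M^{N,1}\|_{L^2(\omega)}\le Ce^{-c_0t}$ (Lemma~\ref{lem:ergodic-strong2}(i) and Proposition~\ref{prop:correlations_L2_Langevin}(ii)), and the mean-field/Gibbs bounds $\|F^{N,1}_t-\mu_t\|_{L^2(\omega)}+\|M^{N,1}-M\|_{L^2(\omega)}\le CN^{-1}$ (Proposition~\ref{prop:correlations_L2_Langevin}(i) and Corollary~\ref{cor:correl-Langevin}).

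For $T_{\mathrm I}$, the plan is a \emph{double telescoping}. First expand $(F^{N,1}_t)^{\otimes m}-\mu_t^{\otimes m}=\sum_{j=1}^m\mu_t^{\otimes(j-1)}\otimes(F^{N,1}_t-\mu_t)\otimes(F^{N,1}_t)^{\otimes(m-j)}$, and likewise for $M^{N,1},M$; subtract the two, and telescope each of the $m$ resulting differences of $m$-fold tensor products one coordinate at a time. Since $\omega^{\otimes m}$ factorizes, each of the $O(m^2)$ terms so produced is an $m$-fold tensor product with a distinguished factor: either $(F^{N,1}_t-\mu_t)-(M^{N,1}-M)=F^{N,1}_t-\mu_t-M^{N,1}+M$, in which case the term is $\le C^m\|F^{N,1}_t-\mu_t-M^{N,1}+M\|_{L^2(\omega)}$, or else a factor $\mu_t-M$ or $F^{N,1}_t-M^{N,1}$ (norm $\le Ce^{-c_0t}$) together with a second distinguished factor $F^{N,1}_t-\mu_t$ or $M^{N,1}-M$ (norm $\le CN^{-1}$), in which case it is $\le C^mN^{-1}e^{-c_0t}$. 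Summing and absorbing $m^2$ into $C^m$ gives $\|T_{\mathrm I}\|_{L^2(\omega^{\otimes m})}\le C^mN^{-1}e^{-c_0t}+C^m\|F^{N,1}_t-\mu_t-M^{N,1}+M\|_{L^2(\omega)}$.

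For $T_{\mathrm{II}}$, fix a partition $\pi$ with blocks $A_1,\ldots,A_p$, $p<m$, and telescope $\prod_iG^{N,A_i}-\prod_iC^{N,A_i}=\sum_{i=1}^p(\bigotimes_{j<i}C^{N,A_j})\otimes(G^{N,A_i}-C^{N,A_i})\otimes(\bigotimes_{j>i}G^{N,A_j})$. By the factorization of the weight and by Proposition~\ref{prop:correlations_L2_Langevin}(i), Corollary~\ref{cor:correl-Langevin} and Proposition~\ref{prop:GGtildeCCtilde-Langevin}, the $i$-th term has norm $\le e^{-c_0t}\prod_{j=1}^p\nu(A_j)$, with $\nu(B):=C$ if $\sharp B=1$ and $\nu(B):=(C\sharp B)^{\sharp B}N^{1-\sharp B}$ if $\sharp B\ge2$ (the factor $e^{-c_0t}$ being supplied by the difference block, whatever its size). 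Since $\sharp\pi<m$ there is at least one block of size $\ge2$; writing $b_1,\ldots,b_q$ ($q\ge1$) for the sizes of the big blocks and $s:=\sum_ib_i\ge2q$, one gets $\prod_j\nu(A_j)=C^m(\prod_ib_i^{b_i})N^{q-s}$, and since $N\ge m$ and $s-q-1\ge q-1\ge0$ one trades the surplus powers of $N$ for powers of $m$, $N^{q-s}\le N^{-1}m^{1+q-s}$, obtaining $\prod_j\nu(A_j)\le C^mN^{-1}\,m\prod_{A\in\pi}f(\sharp A)$ with $f(1):=1$, $f(b):=b^bm^{1-b}$. Summing over the $p\le m$ choices of distinguished block and over all partitions leaves $\|T_{\mathrm{II}}\|_{L^2(\omega^{\otimes m})}\le C^mm^2N^{-1}e^{-c_0t}\sum_{\pi\vdash\llbracket m\rrbracket}\prod_{A\in\pi}f(\sharp A)$, and the exponential formula identifies $\sum_{\pi}\prod_Af(\sharp A)=m!\,[x^m]\exp\big(x+\sum_{k\ge2}(k^k/k!)m^{1-k}x^k\big)$, which, using $k^k/k!\le e^k$ and a Cauchy estimate on the circle $|x|=m/(2e)$, is bounded by $\widehat C^m$ for a universal $\widehat C$. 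This yields $\|T_{\mathrm{II}}\|_{L^2(\omega^{\otimes m})}\le C^mN^{-1}e^{-c_0t}$, and combining with the bound on $T_{\mathrm I}$ concludes.

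The telescopings in the step for $T_{\mathrm I}$ and the per-partition estimate in the step for $T_{\mathrm{II}}$ are routine; the genuine obstacle is the \emph{last} step, namely controlling the sum over the super-exponentially (Bell-number) many partitions in $T_{\mathrm{II}}$ while keeping only a geometric dependence $C^m$. The crude bound on each term carries factors $\sharp A^{\sharp A}$ and, even after noting that each term already contains one power $N^{-1}$, summing naively fails; the resolution is precisely the bookkeeping already used in Step~3 of the proof of Proposition~\ref{prop:correlations_L2_Langevin}(i) — exploiting $m\le N$ to convert excess negative powers of $N$ into negative powers of $m$, and recognizing the resulting weighted partition sum as a coefficient of an explicit exponential generating function.
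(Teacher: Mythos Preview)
Your proof is correct and follows the same skeleton as the paper's: split via the cluster expansion into the pure one-particle tensor part $T_{\mathrm I}$ and the correlation part $T_{\mathrm{II}}$, handle $T_{\mathrm I}$ by a double telescoping (the paper writes out essentially the same decomposition explicitly), and handle each summand of $T_{\mathrm{II}}$ by telescoping the product and invoking Proposition~\ref{prop:GGtildeCCtilde-Langevin} together with the correlation bounds.

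The one place where you go further than the paper is the final summation over partitions in $T_{\mathrm{II}}$. The paper uses the simplification~\eqref{eq:red-bnd} to reduce each block's contribution to $C^{\sharp A}(N^{-1}+\mathds1_{\{\sharp A=1\}})$, obtains a per-partition bound $\le C^mN^{-1}e^{-c_0t}$, and then leaves the sum over~$\pi$ implicit---but there are Bell-number many partitions, so this last step is not immediate. Your argument avoids this by retaining the full decay $(C\sharp A)^{\sharp A}N^{1-\sharp A}$, trading the surplus powers of $N^{-1}$ for $m^{-1}$ via $m\le N$, and controlling the resulting weighted partition sum with the exponential formula and a Cauchy estimate. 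This makes the $C^m$ dependence fully rigorous; the paper's version would need exactly this kind of bookkeeping to close.
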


\begin{proof}
Starting point is the cluster expansion~\eqref{eq:cluster-exp0} in form of
\begin{equation*}
F^{N,m}_t-M^{N,m}
\,=\,
\sum_{\pi\vdash \llbracket m\rrbracket}\Big(\prod_{A\in\pi}G^{N,A}_t-\prod_{A\in\pi}C^{N,A}\Big).
\end{equation*}
Singling out the term corresponding to $\pi=\{\{1\},\ldots,\{m\}\}$ with $G^{N,1}=F^{N,1}$ and $C^{N,1}=M^{N,1}$, this allows to decompose
\begin{multline*}
F^{N,m}_t-\mu_t^{\otimes m}-M^{N,m}+M^{\otimes m}\\
\,=\,
\Big((F^{N,1}_t)^{\otimes m}-\mu_t^{\otimes m}-(M^{N,1})^{\otimes m}+M^{\otimes m}\Big)
+\sum_{\genfrac{}{}{0pt}{2}{\pi\vdash \llbracket m\rrbracket}{\sharp\pi<m}}\Big(\prod_{A\in\pi}G^{N,A}_t-\prod_{A\in\pi}C^{N,A}\Big).
\end{multline*}
We split the proof into two steps, examining the two right-hand side terms separately.

\medskip
\step1 Proof that for all $m\ge1$ and $t\ge0$,
\begin{multline}\label{eq:full_tensor_higher_order-Langevin}
\big\|(F^{N,1}_t)^{\otimes m}-\mu_t^{\otimes m}-(M^{N,1})^{\otimes m}+M^{\otimes m}\big\|_{L^2(\omega^{\otimes m})}\\
\,\le\,C^mN^{-1}e^{-c_0t}+C^m\|F_t^{N,1}-\mu_t-M^{N,1}+M\|_{L^2(\omega)}.
\end{multline}
The second-order difference of tensor products can be decomposed as
\begin{multline*}
(F_t^{N,1})^{\otimes m}-\mu_t^{\otimes m}-(M^{N,1})^{\otimes m}+M^{\otimes m}
\,=\,
\sum_{k=1}^m M^{\otimes k-1}\otimes\big(F_t^{N,1}-\mu_t-M^{N,1}+M\big)\otimes(M^{N,1})^{\otimes m-k}\\
+\sum_{1\le k<\ell\le m}M^{\otimes k-1}\otimes(F_t^{N,1}-\mu_t)\otimes(M^{N,1})^{\otimes \ell-k-1}\otimes(F_t^{N,1}-M^{N,1})\otimes(F_t^{N,1})^{\otimes m-\ell}\\
+\sum_{1\le k<\ell\le m} M^{\otimes k-1}\otimes(\mu_t-M)\otimes\mu_t^{\otimes \ell-k-1}\otimes(F_t^{N,1}-\mu_t)\otimes(F_t^{N,1})^{\otimes m-\ell}.
\end{multline*}
We deduce
\begin{multline*}
\big\|(F_t^{N,1})^{\otimes m}-\mu_t^{\otimes m}-(M^{N,1})^{\otimes m}+M^{\otimes m}\big\|_{L^2(\omega^{\otimes m})}\\
\,\le\,
C^m\|F_t^{N,1}-\mu_t-M^{N,1}+M\|_{L^2(\omega)}
+C^m\|F_t^{N,1}-\mu_t\|_{L^2(\omega)}\Big(\|F_t^{N,1}-M^{N,1}\|_{L^2(\omega)}+\|\mu_t-M\|_{L^2(\omega)}\Big)\\
\,\le\,
C^m\|F_t^{N,1}-\mu_t-M^{N,1}+M\|_{L^2(\omega)}
+C^m\|F_t^{N,1}-\mu_t\|_{L^2(\omega)}\|\mu_t-M\|_{L^2(\omega)},
\end{multline*}
and the claim~\eqref{eq:full_tensor_higher_order-Langevin} then follows from uniform-in-time propagation of chaos and mean-field relaxation, cf.~Proposition~\ref{prop:correlations_L2_Langevin} and Lemma~\ref{lem:ergodic-strong2}(i).

\medskip
\step2 Proof that for all $\pi\vdash \llbracket m\rrbracket$ with $\sharp\pi<m$ we have for all $t\ge0$,
\begin{equation}\label{eq:estim-many-Langevin}
\Big\|\prod_{A\in\pi}G^{N,A}_t-\prod_{A\in\pi}C^{N,A}\Big\|_{L^2(\omega^{\otimes m})}\,\le\,C^mN^{-1}e^{-c_0t}.
\end{equation}
Given $\pi\vdash \llbracket m\rrbracket$, we start from
\begin{multline*}
\Big\|\prod_{A\in\pi}G^{N,A}_t-\prod_{A\in\pi}C^{N,A}\Big\|_{L^2(\omega^{\otimes m})}
\,\le\,\sum_{A\in\pi}\|G^{N,\sharp A}_t-C^{N,\sharp A}\|_{L^2(\omega^{\otimes\sharp A})}\\[-3mm]
\times\prod_{B\in\pi\setminus\{A\}}\Big(\|G^{N,\sharp B}_t\|_{L^2(\omega^{\otimes\sharp B})}+\|C^{N,\sharp B}\|_{L^2(\omega^{\otimes\sharp B})}\Big).
\end{multline*}
Recalling~\eqref{eq:red-bnd}, we note that Proposition~\ref{prop:GGtildeCCtilde-Langevin} yields for all $1\le k\le N$,
\[\|G^{N,k}_t-C^{N,k}\|_{L^2(\omega^{\otimes k})}\,\le\,(Ck)^{k}N^{1-k}e^{-c_0t}\,\le\,C^{k}e^{-c_0t}\big(N^{-1}+\mathds1_{\{k=1\}}\big),\]
and similarly the correlation estimates of Proposition~\ref{prop:correlations_L2_Langevin}(i) and Corollary~\ref{cor:correl-Langevin} yield
\[\|G^{N,k}_t\|_{L^2(\omega^{\otimes k})}+\|C^{N,k}\|_{L^2(\omega^{\otimes k})}\,\le\,(Ck)^{k}N^{1-k}\,\le\,C^{k}\big(N^{-1}+\mathds1_{\{k=1\}}\big).\]
Inserting this into the above, we are led to
\begin{equation*}
\Big\|\prod_{A\in\pi}G^{N,A}_t-\prod_{A\in\pi}C^{N,A}\Big\|_{L^2(\omega^{\otimes m})}\\
\,\le\,C^me^{-c_0t}\prod_{A\in\pi}\big(N^{-1}+\mathds1_{\{\sharp A=1\}}\big).
\end{equation*}
As the condition $\sharp\pi<m$ ensures that there is $A\in\pi$ with $\sharp A\ge2$, the claim~\eqref{eq:estim-many-Langevin} follows.
\end{proof}

\subsection{Proof of Theorem~\ref{thm:main-Langevin}}
We are now in position to conclude the proof of Theorem~\ref{thm:main-Langevin}. Recall that the second part~\eqref{eq:cross-err-cor} already follows from Proposition~\ref{prop:GGtildeCCtilde-Langevin}, so it remains to prove~\eqref{eq:cross-err}.
Comparing the BBGKY equation~\eqref{eq:BBGKY_basic} for $F^{N,1}$, the mean-field equation~\eqref{eq:VFP} for $\mu$, and the corresponding stationary equations for steady states, we find
\begin{equation}\label{eq:cross-error}
(\partial_t - L_0) \big( F^{N,1} - \mu -  M^{N,1} + M \big) \,=\,
\kappa H_1\bigg(\frac{N-1}N\big(F^{N,\{1,*\}}-M^{N,\{1,*\}}\big)-\mu^{\{1\}}\mu^{\{*\}}+M^{\{1\}}M^{\{*\}}\bigg).
\end{equation}
By the Duhamel formula with $F^{N,1}|_{t=0}=\mu_\circ$, recalling the definition of the operator $H_1$, and appealing to Lemma~\ref{lem:ergodic-strong2}(ii) with $\pi^\bot(M^{N,1}-M)=M^{N,1}-M$, we deduce
\begin{multline*}
\| F^{N,1}_t - \mu_t -  M^{N,1} + M \|_{L^2(\omega)}^2 \,\lesssim\,
e^{-2c_0t}\|M^{N,1}-M\|_{L^2(\omega)}^2\\
+\kappa^2 \int_0^te^{-2c_0(t-s)} \Big\|\frac{N-1}N\big(F_s^{N,2}-M^{N,2}\big)-\mu_s^{\otimes2}+M^{\otimes2}\Big\|_{L^2(\omega^{\otimes2})}^2\ddr s,
\end{multline*}
and thus, reorganizing the last term,
\begin{multline*}
\| F^{N,1}_t - \mu_t -  M^{N,1} + M \|_{L^2(\omega)}^2 \,\lesssim\,
e^{-2c_0t}\|M^{N,1}-M\|_{L^2(\omega)}^2
+\frac{\kappa^2}{N^2} \int_0^te^{-2c_0(t-s)}\|\mu_s-M\|_{L^2(\omega)}^2\ddr s\\
+\kappa^2 \int_0^te^{-2c_0(t-s)}\|F_s^{N,2}-\mu_s^{\otimes2}-M^{N,2}+M^{\otimes2}\|_{L^2(\omega^{\otimes2})}^2\ddr s.
\end{multline*}
Appealing to Corollary~\ref{cor:correl-Langevin}, Lemma~\ref{lem:ergodic-strong2}(i), and Proposition~\ref{prop:higher_order_marginals_strong} with $m=2$, respectively, to estimate the three right-hand side terms, we are led to
\begin{equation*}
\| F^{N,1}_t - \mu_t -  M^{N,1} + M \|_{L^2(\omega)}
\,\lesssim\,
N^{-1}t^\frac12e^{-c_0t}
+\kappa \sup_{[0,t]}\|F^{N,1} - \mu -  M^{N,1} + M \|_{L^2(\omega)}.
\end{equation*}
For $\kappa$ small enough, the last term can be absorbed and the conclusion~\eqref{eq:cross-err} then follows for $m=1$ (up to redefining $c_0$). By Proposition~\ref{prop:higher_order_marginals_strong}, this is enough to conclude for all $1\le m\le N$.
\qed

\subsection{Proof of Corollary~\ref{cor:MFTorus-Langevin}}
We note that all our arguments can be repeated for the corresponding dynamics on the torus $\T^d$ with $A\equiv0$. Ergodic estimates for the corresponding Fokker-Planck operator~$L_0$ on the torus are given in~\cite{Bouin_2020} and the perturbative argument for exponential mean-field relaxation in Lemma~\ref{lem:ergodic-strong2}(i) is easily adapted. The conclusion of Corollary~\ref{cor:MFTorus-Langevin} then follows up by noticing that for interaction forces deriving from a potential the explicit expression~\eqref{eq:Gibbs_eq} of the Gibbs equilibrium~$M^N$ ensures $M^{N,1}=M$, so that the cross error reduces to $F^{N,1}-\mu-M^{N,1}+M=F^{N,1}-\mu$.
\qed

\section{Overdamped setting}\label{sec:overdamped}

In this section, we briefly discuss the adaptation of the results to the overdamped Langevin dynamics~\eqref{eq:overdamped}.
To streamline the exposition, we use the same notation as for the underdamped dynamics, simply adapting their meaning accordingly, cf.~\eqref{eq:notation-overdamped}.

\subsection{Mean-field relaxation}
We consider the following self-adjoint elliptic operator on the weighted space $L^2(\omega)$: for $h\in C^\infty_c(\R^d)$,
\begin{equation}\label{eq:lin-op-L0}
L_0h\,:=\,\triangle h+\Div(h\nabla A)=\Div((\nabla+\nabla A)h).
\end{equation}
The following ergodic estimates for the mean-field evolution~\eqref{eq:MKV} and for the linearized semigroup~$e^{tL_0}$ are immediate by standard parabolic theory. They constitute the counterpart of Lemma~\ref{lem:ergodic-strong2} in the overdamped setting.

\begin{lem}\label{lem:ergodic-strong}
Let $K\in L^\infty(\R^{2d})^d$ and let the confining potential $A$ satisfy~\eqref{eq:confinement-A}. Then there exist $\kappa_0,c_0>0$ (only depending on $d,K,A$) such that the following hold for all $\kappa \in [0,\kappa_0]$.
\begin{enumerate}[(i)]
\item Given $\mu_\circ \in \Pc(\R^d)\cap L^2(\omega)$, the solution $\mu \in C(\R_+; \Pc(\R^d)\cap L^2(\omega))$ of the mean-field equation~\eqref{eq:MKV} satisfies for all $t \ge 0$,
\begin{equation*}
\|\mu_t-M\|_{L^2(\omega)} \,\lesssim\, e^{-c_0 t} \|\mu_\circ-M\|_{L^2(\omega)}.
\end{equation*}
\item For all $h\in C^\infty_c(\R^d)$ with $\int_{\R^d}h=0$, we have for all $t \ge 0$,
\begin{equation*}
e^{2c_0t}\|e^{tL_0} h\|_{L^2(\omega)}^2+\int_0^t e^{2c_0s}\| (\nabla+\nabla A)e^{sL_0}h\|_{L^2(\omega)}^2 \, \ddr s
\,\lesssim\, \|h\|_{L^2(\omega)}^2.
\end{equation*}
In particular, by duality, for all $g\in L^2_\loc(\R_+;L^2(\R^d)^d)$,
\[\Big\|\int_0^te^{(t-s)L_0}\Div(g(s))\,\ddr s\Big\|^2_{L^2(\omega)}\,\lesssim\,\int_0^te^{-2c_0(t-s)}\|g(s)\|_{L^2(\omega)}^2\,\ddr s.\]
\end{enumerate}
\end{lem}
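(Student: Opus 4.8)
\textbf{Proof proposal for Lemma~\ref{lem:ergodic-strong}.}
The plan is to first isolate the single analytic input and then transcribe the perturbative scheme already used for the underdamped dynamics. I would begin by recording that, with $\omega=\exp(A)$, the operator $L_0$ is self-adjoint and nonpositive on $L^2(\omega)$: an integration by parts gives $\langle L_0h,k\rangle_{L^2(\omega)}=-\int_{\R^d}\omega\,(\nabla+\nabla A)h\cdot(\nabla+\nabla A)k$, so its Dirichlet form is $\mathcal E(h)=\int_{\R^d}\omega\,|(\nabla+\nabla A)h|^2$ and $\Ker(L_0)=\mathrm{span}\{M_0\}$ with $M_0:=\omega^{-1}/\!\int_{\R^d}\omega^{-1}$. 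Since $\langle h,M_0\rangle_{L^2(\omega)}\propto\int_{\R^d}h$, the hypothesis $\int_{\R^d}h=0$ in item~(ii) is precisely $h\in\Ker(L_0)^\bot$. The substitution $h=e^{-A}u$ turns $\mathcal E(h)$ into $\int_{\R^d}e^{-A}|\nabla u|^2$ and $\|h\|_{L^2(\omega)}^2$ into $\int_{\R^d}e^{-A}u^2$, so the spectral gap of $-L_0$ on $\Ker(L_0)^\bot$ coincides with the Poincar\'e constant of the probability measure $\propto e^{-A}$; by the Bakry--\'Emery criterion and~\eqref{eq:confinement-A} this constant is bounded below by $\lambda$, depending only on $d,A$. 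This replaces the hypocoercivity input of Lemma~\ref{lem:ergodic-strong2}: here $L_0$ is genuinely coercive on $\Ker(L_0)^\bot$, so no modified norms are needed at this stage.

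Next I would derive item~(ii) exactly as in the proof of Lemma~\ref{lem:ergodic-strong2}(ii). Along the (mean-zero-preserving) flow one has the energy identity $\tfrac{\ddr}{\ddr t}\|e^{tL_0}h\|_{L^2(\omega)}^2=-2\|(\nabla+\nabla A)e^{tL_0}h\|_{L^2(\omega)}^2$; taking $c_0$ to be half the spectral gap so that $2c_0\|e^{tL_0}h\|_{L^2(\omega)}^2\le\|(\nabla+\nabla A)e^{tL_0}h\|_{L^2(\omega)}^2$, this gives $\tfrac{\ddr}{\ddr t}\big(e^{2c_0t}\|e^{tL_0}h\|_{L^2(\omega)}^2\big)\le-e^{2c_0t}\|(\nabla+\nabla A)e^{tL_0}h\|_{L^2(\omega)}^2$, and integrating in time yields the combined estimate. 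For the dual bound I would argue by duality against $\phi\in L^2(\omega)$: using self-adjointness of $L_0$ and the identity $\langle\Div(g),\psi\rangle_{L^2(\omega)}=-\int_{\R^d}\omega\,g\cdot(\nabla+\nabla A)\psi$, one gets $\int_0^t\langle e^{(t-s)L_0}\Div(g(s)),\phi\rangle_{L^2(\omega)}\,\ddr s=-\int_0^t\int_{\R^d}\omega\,g(s)\cdot(\nabla+\nabla A)e^{(t-s)L_0}\phi\,\ddr s$; since the left-hand integrand is mean-zero one may take $\int_{\R^d}\phi=0$, and Cauchy--Schwarz in $s$ combined with the combined estimate (after the substitution $s\mapsto t-s$) closes the bound.

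Finally, item~(i) should follow by repeating the perturbative argument in the proof of Lemma~\ref{lem:ergodic-strong2}(i), with $\Div$ in place of $\Div_v$. From Duhamel's formula $\mu_t=e^{tL_0}\mu_\circ-\kappa\int_0^te^{(t-s)L_0}\Div((K\ast\mu_s)\mu_s)\,\ddr s$, applying $\pi^\bot$ (so that $\pi\mu_t=M_0$), using the dual estimate of~(ii) together with $\|K\ast\mu_s\|_{L^\infty}\le\|K\|_{L^\infty}$, and Gronwall's inequality, one obtains for $\kappa$ small the uniform a priori bound $\|\mu_t\|_{L^2(\omega)}\lesssim1+\|\mu_\circ\|_{L^2(\omega)}$. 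Then the difference of two solutions is mean-zero and solves $(\partial_t-L_0)(\mu-\mu')=-\kappa\Div\big((K\ast\mu')(\mu-\mu')+(K\ast(\mu-\mu'))\mu\big)$; Duhamel, the dual estimate of~(ii), the a priori bound, and Gronwall then give $\|\mu_t-\mu'_t\|_{L^2(\omega)}^2\lesssim e^{-c_0t}\|\mu_\circ-\mu'_\circ\|_{L^2(\omega)}^2$ for $\kappa$ small, whence uniqueness of the steady state $M\in L^2(\omega)$ and the claimed exponential relaxation. The only genuinely new step is the first paragraph --- identifying the self-adjoint coercive structure of $L_0$ on $L^2(e^A)$ and the quantitative weighted Poincar\'e inequality with constant depending only on $\lambda$ --- and it is not so much an obstacle as a short verification; everything else is a direct, and in fact lighter, transcription of the underdamped proof.
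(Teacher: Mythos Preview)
Your proposal is correct and follows essentially the same approach as the paper: the paper's proof simply records the weighted Poincar\'e inequality for $e^{-A}$ (from uniform convexity, as you do via Bakry--\'Emery) and then refers back to the proof of Lemma~\ref{lem:ergodic-strong2}, skipping all details. You have faithfully filled in those details---the self-adjoint coercive structure of $L_0$ on $L^2(\omega)$, the energy identity yielding~(ii), the duality argument, and the perturbative scheme for~(i)---and in fact your version is more complete than what the paper writes out.
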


\begin{proof}
As $A$ is uniformly convex, cf.~\eqref{eq:confinement-A}, we recall that the probability measure $\propto e^{-A}$ satisfies a Poincar\'e inequality, see e.g.~\cite[Section A.19]{Villani_2009}:
in terms of the weight $\omega=e^{A}$, this reads as follows, for all $h\in C^\infty_c(\R^d)$,
\begin{equation}\label{eq:Gauss-Poinc}
\Big\|h-M_0\int_{\R^d}h\Big\|_{L^2(\omega)}\,\lesssim\,\|(\nabla+\nabla A) h\|_{L^2(\omega)},\qquad M_0\,:=\,\tfrac{\omega^{-1}}{\int_{\R^d}\omega^{-1}}.
\end{equation}
Using this, the conclusion easily follows similarly as in the proof of Lemma~\ref{lem:ergodic-strong2}. We skip the details for shortness.
\end{proof}

\subsection{BBGKY hierarchy}

In the present overdamped setting, the BBGKY hierarchy of equations satisfied by marginals and by correlation functions take the exact same form as in the underdamped setting, cf.~\eqref{eq:BBGKY_basic} and Lemma~\ref{lem:hier-corr-Langevin}, up to replacing the operators $H_k$, $S_{k,\ell}$ introduced in Definition~\ref{def:operator-L2-Langevin} by the following,
\begin{eqnarray}
S_{k,\ell} h^P &:=& - \Div_{x_k}(K(x_k, x_\ell) h^P), \nonumber\\
H_k h^{P \cup \{\ast\}}&:=&-\Div_{x_k} \Big( \int_{\R^d} K(x_k, x_*) h^{P \cup \{\ast\}}\,\ddr x_* \Big).\label{eq:redef-SH}
\end{eqnarray}
In this new setting, now using the ergodic estimates in Lemma~\ref{lem:ergodic-strong} instead of Lemma~\ref{lem:ergodic-strong2}, the proof of Proposition~\ref{prop:correlations_L2_Langevin} is immediately adapted. In this way, we slightly extend the result of~\cite{Xie-24} on uniform-in-time correlation estimates by treating the whole space with confinement, and we obtain a uniform-in-$N$ Gibbs relaxation result in $L^2$ that partially improves previous works on the topic.

\subsection{Proof of Theorem~\ref{thm:main-overdamped}}
Starting from uniform-in-time correlation estimates and from the BBGKY hierarchy of equations for correlations,
now appealing to the ergodic estimates of Lemma~\ref{lem:ergodic-strong} to perform the estimates,
the conclusion follows by repeating the exact same steps as in the proof of Theorem~\ref{thm:main-Langevin} in Section~\ref{sec:thm-Langevin}. We skip the
details for shortness.\qed

\subsection{Proof of Corollary~\ref{cor:MFTorus-Xie}}
As for Corollary~\ref{cor:MFTorus-Langevin}, all our arguments can be directly repeated in the corresponding setting on the torus $\T^d$ with $A\equiv0$. 
It only remains to check that for a translation-invariant interaction kernel $K(x,y)=K_0(x-y)$ we have $M^{N,1}=M$. On the one hand, for $\kappa$ small enough, the uniqueness of the mean-field equilibrium~$M$ ensures that it coincides with the Lebesgue measure on~$\T^d$. On the other hand, taking the first marginal in the steady-state Liouville equation for $M^N$, we find
\[\triangle M^{N,1}=\kappa\frac{N-1}N\Div\Big(\int_{\T^d} K_0(\cdot-y)M^{N,2}(\cdot,y)\,\ddr y\Big).\]
As by translation invariance of the system we have $M^{N,2}(x,y)= M^{N,2}(0,y-x)$, we find that
\[\int_{\T^d} K_0(\cdot-y) M^{N,2}(\cdot,y)\,\ddr y=\int_{\T^d} K_0(-y) M^{N,2}(0,y)\,\ddr y\]
is a constant. The equation for $M^{N,1}$ thus reduces to $\triangle  M^{N,1}=0$, and we can conclude that $M^{N,1}$ is indeed constant, hence $M^{N,1}=M$.\qed

\section{Beyond weak interactions}\label{sec:extension-app}
This section is devoted to the proof of Theorem~\ref{th:non-pert}. We consider Cases~1 and~2 described in Section~\ref{sec:extensions}, for the overdamped dynamics on $\T^d$ with $A\equiv0$.
As $\kappa$ is now allowed to be large and as arguments in weak norms further lead to losses of derivatives, we cannot appeal to direct bootstrap arguments as before. For this reason, the structure of the proof needs to be considerably reworked.

\subsection{Ergodic estimates for mean-field operators}\label{subsec:ergodic-est}
Instead of the elliptic operator~$L_0$ in~\eqref{eq:lin-op-L0}, we consider the full linearized McKean-Vlasov operator at mean-field equilibrium~$M$: for $h\in C^\infty(\T^d)$,
\begin{equation}\label{eq:def-Lop}
L_M h\,:=\,\triangle h-\kappa\Div(h( K\ast M))-\kappa\Div(M(K\ast h)).
\end{equation}
Using properties of $K$ and recalling~$M=1$, this actually means
\begin{equation*}
L_M h\,=\,\left\{\begin{array}{lll}
\triangle h&:&\text{in Case~1},\\
\triangle h+\kappa\triangle W\ast h&:&\text{in Case~2}.
\end{array}\right.
\end{equation*}
In both cases, the unique ergodicity of the McKean-Vlasov dynamics and the ergodic properties of the semigroup $\{e^{tL_M}\}_{t\ge0}$ are established in~\cite[Sections~3.6.2--3.6.3]{Delarue_Tse_21} (see in particular~\cite{Carrillo_2023} for Case~2). As in~\cite{BD_2024}, these can be extended to arbitrary negative Sobolev norms.

\begin{lem}[\cite{Carrillo_2023,Delarue_Tse_21,BD_2024}]\label{prop:estim-Vts-Wk1*}
In both Cases~1 and~2, there exists $c_0>0$ (only depending on $d,K$), such that the following hold for any $\kappa\ge0$.
\begin{enumerate}[(i)]
\item Given $\mu_\circ\in\Pc\cap C^\infty(\T^d)$, the solution $\mu\in C(\R_+;\Pc\cap C^\infty(\T^d))$ of the corresponding mean-field equation~\eqref{eq:MKV} on $\T^d$ satisfies for all $t\ge0$ and $\ell\ge1$,
\[\|\mu_t-M\|_{W^{-\ell,1}(\T^d)}\,\lesssim_\ell\,e^{-c_0t}\|\mu_\circ-M\|_{W^{-\ell,1}(\T^d)},\]
for some multiplicative factor only depending on $d,K,\ell,\kappa$.
\smallskip\item For all $h\in C^\infty(\T^d)$ with $\int_{\T^d}h=0$, we have for all $t \ge 0$ and $\ell\ge1$,
\begin{equation*}
\|e^{tL_M} h\|_{W^{-\ell,1}(\T^d)}\,\lesssim_\ell\, e^{-c_0t}\|h\|_{W^{-\ell,1}(\T^d)}.
\end{equation*}
\end{enumerate}
\end{lem}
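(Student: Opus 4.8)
The plan is to reduce everything to the two explicit expressions for $L_M$ recorded just after~\eqref{eq:def-Lop}: in Case~1 one has $L_Mh=\triangle h$, while in Case~2 one has $L_Mh=\triangle h+\kappa\triangle W\ast h$. In both cases $L_M$ is a Fourier multiplier on $\T^d$, diagonal in the Fourier basis, with symbol equal to a negative constant times $|k|^2$ in Case~1 and to a negative constant times $|k|^2(1+\kappa\hat W(k))$ in Case~2; under the $H$-stability hypothesis $\hat W\ge0$ (or its relaxation in the footnote) the latter symbol is real and bounded above by $-c_0<0$ uniformly over $k\neq0$, for some $c_0$ depending only on $d,K$. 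Moreover, $e^{tL_M}$ then factors as $e^{t\triangle}$ composed with $e^{t\kappa\triangle W\ast}$ in Case~2, each factor being convolution against a smooth periodic kernel, with $e^{t\triangle}$ a probability kernel and $e^{t\kappa\triangle W\ast}$ bounded on $L^1(\T^d)$ with norm $\le e^{t\kappa\|\triangle W\|_{L^1}}$ since $W\in C^\infty(\T^d)$. The strong-norm versions of items~(i) and~(ii), namely exponential relaxation of $\mu_t$ to $M$ and of $e^{tL_M}h$ on mean-zero $h$ in $L^2(\T^d)$ for any $\kappa\ge0$, are the content of~\cite[Sections~3.6.2--3.6.3]{Delarue_Tse_21} and~\cite{Carrillo_2023}; what remains is to transfer these to the negative Sobolev scale as in~\cite{BD_2024}.

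For item~(ii) I would argue by splicing at time $1$. On $[0,1]$: since convolution against an $L^1(\T^d)$ kernel is bounded on $W^{-\ell,1}(\T^d)$ (by duality, using boundedness on $W^{\ell,\infty}$) and commutes with derivatives, the factorization above gives $\|e^{tL_M}h\|_{W^{-\ell,1}}\le e^{\kappa\|\triangle W\|_{L^1}}\|h\|_{W^{-\ell,1}}$ for $t\in[0,1]$, which is absorbed into $e^{-c_0t}\|h\|_{W^{-\ell,1}}$ up to a constant. For $t\ge1$: write $e^{tL_M}h=e^{(t-1)L_M}(e^{L_M}h)$; since $1+\kappa\hat W(k)\ge1$, a direct Fourier estimate gives the parabolic smoothing bound $\|e^{L_M}h\|_{L^2}\lesssim_\ell\|h\|_{W^{-\ell,1}}$ (using $|\hat h(k)|\lesssim_\ell(1+|k|)^\ell\|h\|_{W^{-\ell,1}}$ and summability of $(1+|k|)^{2\ell}e^{-2c|k|^2}$), and $e^{L_M}h$ still has zero mean; the $L^2$-decay from the references then yields $\|e^{(t-1)L_M}(e^{L_M}h)\|_{L^2}\lesssim e^{-c_0(t-1)}\|e^{L_M}h\|_{L^2}$, and finally $\|\cdot\|_{W^{-\ell,1}}\le\|\cdot\|_{L^1(\T^d)}\le\|\cdot\|_{L^2(\T^d)}$ closes the estimate.

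For item~(i), set $\nu:=\mu-M$ with $M\equiv1$; using $K\ast M=0$ one checks from~\eqref{eq:MKV} that $\partial_t\nu=L_M\nu-\kappa\Div\big((K\ast\nu)\nu\big)$ with $\int_{\T^d}\nu=0$, so that the nonlocal remainder is \emph{quadratic} in $\nu$. Duhamel's formula reads $\nu_t=e^{tL_M}\nu_\circ-\kappa\int_0^te^{(t-s)L_M}\Div\big((K\ast\nu_s)\nu_s\big)\,\ddr s$. I would bound the linear term by item~(ii) and the remainder using, in addition, the smoothing estimate $\|e^{\tau L_M}\Div g\|_{W^{-\ell,1}}\lesssim\big(\tau^{-1/2}\wedge e^{-c_0\tau}\big)\|g\|_{L^1}$ (from $L^1$-boundedness of $e^{\tau L_M}$, commutation with $\Div$, and $\|\nabla e^{\tau\triangle}\|_{L^1\to L^1}\lesssim\tau^{-1/2}$) together with $\|(K\ast\nu_s)\nu_s\|_{L^1}\lesssim\|\nu_s\|_{L^2}^2$. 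The input $\|\nu_s\|_{L^2}\lesssim e^{-c_0 s}$ for $s\ge1$ is available with a constant depending only on $d,K,\kappa$: since $\mu_\circ$ is a probability density, parabolic regularization bounds $\|\mu_1\|_{C^k}$ by such a constant, and the $L^2$-relaxation from the references then applies from time $1$ on, while on $[0,1]$ one simply uses $\|\nu_s\|_{W^{-\ell,1}}\le\|\nu_s\|_{L^1(\T^d)}\le2$. Feeding these bounds into Duhamel yields $\|\nu_t\|_{W^{-\ell,1}}\lesssim_\ell e^{-c_0 t}$, which already gives the claim whenever $\|\mu_\circ-M\|_{W^{-\ell,1}}$ is bounded below by a fixed constant (recall it is always $\le2$).

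\textbf{The main obstacle} is the remaining regime, where $\mu_\circ$ is close to $M$ and one must show that the quadratic remainder does not spoil the \emph{proportionality} of the bound to $\eta:=\|\mu_\circ-M\|_{W^{-\ell,1}}$. The difficulty is structural: $(K\ast\nu)\nu$ is a product, not controlled by a negative norm of $\nu$ alone, and $\Div$ costs a full derivative while the diffusion recovers only a half. As in~\cite{BD_2024}, I would resolve this by a continuation argument: under the bootstrap assumption $\|\nu_t\|_{W^{-\ell,1}}\le 2C_\star\eta e^{-c_0 t}$, interpolate this \emph{weighted} smallness against the unconditional smoothness of $\nu_t$ (by parabolic regularization $\|\nu_t\|_{H^\sigma}\lesssim_\sigma e^{-c_0 t}$ for $t\ge1$, with the Sobolev index $\sigma$ chosen as large as needed, which is legitimate since $\mu_\circ\in C^\infty$) to obtain $\|\nu_t\|_{L^2}\lesssim\eta^\theta e^{-c_0 t}$ with an interpolation weight $\theta>1/2$; then $\|(K\ast\nu_s)\nu_s\|_{L^1}\lesssim\eta^{2\theta}e^{-2c_0 s}$ with $2\theta>1$, and inserting this into Duhamel closes the bootstrap for $\eta$ small enough. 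The short-time range is handled separately via parabolic regularization and the trivial bound $\|\nu_t\|_{W^{-\ell,1}}\le 2$.
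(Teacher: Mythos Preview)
The paper does not actually give a proof of this lemma: it is stated with attribution to \cite{Carrillo_2023,Delarue_Tse_21,BD_2024}, accompanied only by the sentence ``As in~\cite{BD_2024}, these can be extended to arbitrary negative Sobolev norms.'' So there is no in-paper argument to compare against; what you have written is a genuine attempt to supply the missing details.

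Your treatment of item~(ii) is correct and clean: the explicit Fourier-multiplier structure in both cases, the short-time $L^1$-convolution bound, and the splicing at $t=1$ via parabolic smoothing $W^{-\ell,1}\to L^2$ followed by the strong-norm decay from the references all work as stated.

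For item~(i), your argument is essentially right but you have made the ``main obstacle'' harder than it is. The key simplification you are missing is that in both Cases~1 and~2 the kernel $K$ is \emph{smooth}, so convolution against it gains arbitrarily many derivatives: for any $\ell\ge1$,
\[
\|K\ast\nu\|_{L^\infty(\T^d)}\,\lesssim_\ell\,\|\nu\|_{W^{-\ell,1}(\T^d)}.
\]
This immediately yields $\|(K\ast\nu_s)\nu_s\|_{L^1}\lesssim\|\nu_s\|_{W^{-\ell,1}}\|\nu_s\|_{L^1}$, which is \emph{linear} in the negative-norm quantity you are tracking, with a scalar prefactor $\|\nu_s\|_{L^1}$. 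That prefactor is $\le 2$ for $s\le1$ and, by the $L^2$-relaxation from the references together with the regularization argument you already sketched, satisfies $\|\nu_s\|_{L^1}\le\|\nu_s\|_{L^2}\lesssim e^{-c_0s}$ for $s\ge1$ with a constant depending only on $d,K,\kappa$. Feeding this into Duhamel and applying a singular Gronwall inequality (the $\tau^{-1/2}$ kernel is integrable) gives $e^{c_0t}\|\nu_t\|_{W^{-\ell,1}}\lesssim\eta$ directly, with the implicit constant depending on $d,K,\ell,\kappa$ but not on $\mu_\circ$. No interpolation between $W^{-\ell,1}$ and $H^\sigma$, and no bootstrap on a smallness threshold for $\eta$, is needed. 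Your interpolation route does close (once the short-time regime $t\le1$ is handled by the singular Gronwall as above, which you left somewhat implicit), but it is unnecessarily elaborate for the smooth-$K$ setting of Cases~1 and~2.
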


\subsection{Uniform-in-time correlation estimates}
\label{subsec:control_correl}

Based on the ergodic properties in Lemma~\ref{prop:estim-Vts-Wk1*} (together with corresponding properties of the mean-field dynamics linearized at the mean-field solution itself), uniform-in-time propagation of chaos in weak norms was shown in~\cite{Delarue_Tse_21} using the master equation formalism, which we further pushed forward in~\cite{BD_2024} to establish correlation estimates.

\begin{prop}[\cite{Delarue_Tse_21,BD_2024}]\label{thm:correl} 
Given $\mu_\circ\in \Pc\cap C^\infty(\T^d)$,
consider the unique global solution $F^N\in C(\R_+;\Pc\cap C^\infty(\T^{dN}))$ of the corresponding Liouville equation~\eqref{eq:Liouville_overdamped} on $\T^{dN}$ with tensorized data~\eqref{eq:init_tensorization_overdamped}, and the unique solution $\mu\in C(\R_+;\Pc\cap C^\infty(\T^d))$ of the mean-field equation~\eqref{eq:MKV} on $\T^d$.
In both Cases~1 and~2, for any $\kappa\ge0$, we have for all $1\le m\le N$ and $t\ge0$,
\[\|F_t^{N,m}-\mu_t^{\otimes m}\|_{W^{-\ell_m,1}(\T^{dm})}\,\le\,C_mN^{-1},\]
and in addition,
\[\|G^{N,m}_t\|_{W^{-\ell_m,1}(\T^{dm})} \,\le\,  C_mN^{1-m},\]
for some $\ell_m$ only depending on
$m$, and some multiplicative factor $C_m$ further depending on $d,K,\mu_\circ,\kappa$.
\end{prop}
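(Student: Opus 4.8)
The statement is, in effect, a transcription into the present framework of results from~\cite{Delarue_Tse_21} (propagation of chaos) and~\cite{BD_2024} (correlation estimates), so the plan is to recall the structure of those arguments and to check that the only ergodic inputs they require are precisely Lemma~\ref{prop:estim-Vts-Wk1*}, which holds in Cases~1 and~2 for every $\kappa\ge0$.

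\textbf{Step 1: propagation of chaos via the master equation.} The bound $\|F^{N,m}_t-\mu_t^{\otimes m}\|_{W^{-\ell_m,1}(\T^{dm})}\le C_mN^{-1}$ is obtained by the master-equation method of~\cite{Delarue_Tse_21}. One introduces the finite-$N$ value function describing the $N$-particle flow tested against a smooth $m$-particle observable $\varphi$, together with its mean-field counterpart $\langle\varphi,\mu_t^{\otimes m}\rangle$; their difference solves a transport equation along the mean-field characteristics whose source is $O(N^{-1})$ --- it encodes the second-order (diffusion) term in the Itô formula for the empirical measure --- and whose homogeneous part is the linearized mean-field evolution. Uniform-in-time control of this difference rests on two facts: exponential relaxation of the McKean--Vlasov dynamics, which is Lemma~\ref{prop:estim-Vts-Wk1*}(i); and exponential decay of the semigroup obtained by linearizing the dynamics along the trajectory $t\mapsto\mu_t$. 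In Cases~1 and~2 one has $M=1$, so the linearization at equilibrium is exactly the operator $L_M$ of~\eqref{eq:def-Lop}, whose decay in every negative Sobolev norm is Lemma~\ref{prop:estim-Vts-Wk1*}(ii); linearizing along the non-stationary $\mu_t$ rather than at $M$ perturbs this generator only by a term that converges exponentially to zero (again by Lemma~\ref{prop:estim-Vts-Wk1*}(i)), so a Duhamel/Gronwall argument upgrades equilibrium decay to decay along the trajectory, at the price of a fixed loss of derivatives. Iterating over the $m$ tensor directions yields a regularity exponent $\ell_m$ depending only on $m$.

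\textbf{Step 2: correlation estimates by induction on $m$.} For the refined bound $\|G^{N,m}_t\|_{W^{-\ell_m,1}(\T^{dm})}\le C_mN^{1-m}$, I would follow~\cite{BD_2024} and argue by induction on $m$, using the conclusion of Step~1 and the overdamped BBGKY hierarchy for correlations, which takes the same form as Lemma~\ref{lem:hier-corr-Langevin} up to replacing $S_{k,\ell},H_k$ by~\eqref{eq:redef-SH}. The case $m=1$ is immediate since $G^{N,1}=F^{N,1}$, and $m=2$ is $\|F^{N,2}_t-(F^{N,1}_t)^{\otimes2}\|_{W^{-\ell_2,1}}\lesssim N^{-1}$, which follows from Step~1 together with the first-order (Bogolyubov) term of the master-equation expansion. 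For $2\le m\le N$, write the Duhamel formula for $G^{N,m}$ with the tensorized linearized semigroup $(e^{tL_M})^{\otimes m}$, restricted to functions with vanishing partial integrals, cf.~\eqref{eq:integ-correl}, where Lemma~\ref{prop:estim-Vts-Wk1*}(ii) tensorizes into uniform exponential decay. Each source term in the hierarchy is a bilinear or trilinear expression in lower-order correlations, or couples to $G^{N,m+1}$; the key point, exactly as in~\eqref{eq:pre-est-GNm}, is that every such term carries an explicit factor $N^{-1}$ relative to the naive scaling. Using the inductive bounds $\|G^{N,j}_t\|_{W^{-\ell_j,1}}\le C_jN^{1-j}$ for $j<m$, and closing off the coupling to $G^{N,m+1}$ by an a~priori bound read directly off the master-equation expansion (rather than from below in the hierarchy, which would create an infinite regress), the integral terms are $O(N^{1-m})$, and the semigroup weights make the bound uniform in $t$.

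\textbf{Main obstacle.} The delicate point is the uniformity in time, which hinges on transferring the decay of $e^{tL_M}$ from equilibrium to the linearized flow along $\mu_t$ and to its $m$-fold tensorization while tracking a regularity budget: each nonlocal factor $K\ast(\cdot)$ is continuous on $W^{-\ell,1}$, but the divergence structure $\Div_{x_k}$ and the product and tensor operations force passing to $W^{-\ell',1}$ with $\ell'>\ell$, so the exponent must be allowed to grow with $m$ --- this is precisely why $\ell_m$, and not just $C_m$, depends on $m$. A secondary difficulty is combinatorial: the correlation hierarchy at level $m$ has a number of terms growing with $m$, and one must check, as in the proof of Proposition~\ref{prop:correlations_L2_Langevin}(i), that summing them preserves the bookkeeping behind $C_m$. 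Since here both $m$ and $\kappa$ are fixed, no smallness or bootstrap is needed and all constants may depend on $\kappa$, which is what lets the argument cover arbitrary $\kappa\ge0$.
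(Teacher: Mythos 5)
The paper gives no proof of this proposition: it is imported verbatim from \cite{Delarue_Tse_21,BD_2024}, and the surrounding text attributes it exactly to the master-equation formalism combined with the ergodic estimates of Lemma~\ref{prop:estim-Vts-Wk1*} (both at equilibrium and along the mean-field trajectory). Your sketch reconstructs precisely that mechanism --- master equation for the $O(N^{-1})$ propagation-of-chaos bound, then a hierarchical induction for the correlation functions with the loss of derivatives absorbed into $\ell_m$ --- so it is consistent with the cited argument and with the paper's own description of it; it remains a sketch rather than a self-contained proof, but there is no discrepancy in approach.
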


In the limit $t\uparrow\infty$, in view of the qualitative relaxation of $F^N$ to Gibbs equilibrium, together with mean-field relaxation, cf.\@ Lemma~\ref{prop:estim-Vts-Wk1*}(i), we deduce the following corresponding estimates for the correlation functions $\{C^{N,m}\}_{1\le m\le N}$ associated with Gibbs equilibrium $M^N$.

\begin{cor}\label{cor:correl} 
In both Cases~1 and~2, for any $\kappa\ge0$, we have for all $1\le m\le N$,
\begin{eqnarray*}
\|M^{N,m}-M^{\otimes m}\|_{W^{-\ell_m,1}(\T^{dm})}&\le&C_mN^{-1},\\
\|C^{N,m}\|_{W^{-\ell_m,1}(\T^{dm})}&\le&C_mN^{1-m}.
\end{eqnarray*}
\end{cor}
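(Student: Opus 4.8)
The plan is to obtain both bounds by letting $t\uparrow\infty$ in the corresponding time-dependent estimates of Proposition~\ref{thm:correl}, combined with the qualitative convergence $F_t^N\to M^N$ and the mean-field relaxation of Lemma~\ref{prop:estim-Vts-Wk1*}(i). First I would recall that, for fixed $N$, the Liouville solution $F^N$ converges (at least weakly, hence in every $W^{-\ell,1}(\T^{dN})$ by compactness of $\T^{dN}$ and boundedness of all Sobolev norms uniformly in time on the torus) to the unique steady state $M^N$; consequently each marginal $F_t^{N,m}\to M^{N,m}$ in $W^{-\ell_m,1}(\T^{dm})$. Similarly, by Lemma~\ref{prop:estim-Vts-Wk1*}(i), $\mu_t\to M$ in every $W^{-\ell,1}(\T^d)$, so $\mu_t^{\otimes m}\to M^{\otimes m}$ in $W^{-\ell_m,1}(\T^{dm})$ (the tensor power is continuous for this weak norm since all factors stay bounded in $C^\infty$). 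Passing to the limit in the first estimate of Proposition~\ref{thm:correl},
\[
\|M^{N,m}-M^{\otimes m}\|_{W^{-\ell_m,1}(\T^{dm})}\,=\,\lim_{t\to\infty}\|F_t^{N,m}-\mu_t^{\otimes m}\|_{W^{-\ell_m,1}(\T^{dm})}\,\le\,C_mN^{-1},
\]
which is the first claim.

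For the second bound, I would argue in the same way using the definition~\eqref{eq:def-cumGm} of the correlation functions: both $G^{N,m}$ and $C^{N,m}$ are the same fixed polynomial (with $m$-independent but $\pi$-dependent integer coefficients) in the marginals $\{F^{N,k}\}_{1\le k\le m}$, resp.\ $\{M^{N,k}\}_{1\le k\le m}$. Since $F_t^{N,k}\to M^{N,k}$ in $W^{-\ell_k,1}$ for each $k\le m$, and since multiplication of smooth functions is continuous from $C^\infty\times W^{-\ell,1}$ into $W^{-\ell,1}$ (all factors bounded in $C^\infty$ uniformly in time by standard parabolic smoothing on the torus), we get $G_t^{N,m}\to C^{N,m}$ in $W^{-\ell_m,1}(\T^{dm})$. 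Therefore
\[
\|C^{N,m}\|_{W^{-\ell_m,1}(\T^{dm})}\,=\,\lim_{t\to\infty}\|G_t^{N,m}\|_{W^{-\ell_m,1}(\T^{dm})}\,\le\,C_mN^{1-m},
\]
giving the second claim. Alternatively, and perhaps more cleanly, the $C^{N,m}$ bound follows from the $M^{N,m}$ bound together with the bound $\|M^{\otimes m}-(\text{lower-order products})\|\lesssim$ already implicit in the structure, but the direct limiting argument is the most transparent.

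The main obstacle, and the only point requiring genuine care, is justifying that the qualitative long-time convergence $F_t^N\to M^N$ holds in the relevant topology: one needs that the orbit $\{F_t^N\}_{t\ge0}$ is precompact and that $M^N$ is its unique accumulation point. On the compact torus with $A\equiv0$ and $K$ bounded (indeed smooth), parabolic regularity gives uniform-in-time bounds on $F_t^N$ in every $C^k(\T^{dN})$, so precompactness in $W^{-\ell,1}$ is automatic, and the uniqueness of the invariant measure (from~\cite{Kulik_2017}, or directly from hypoellipticity and irreducibility) pins down the limit. One should also note that the exponential decay rate from Proposition~\ref{thm:correl} and Lemma~\ref{prop:estim-Vts-Wk1*} is not actually needed here — only the qualitative $t\to\infty$ limit of the $O(N^{-1})$, resp.\ $O(N^{1-m})$, bounds, which are uniform in $t$ — so the corollary is a soft consequence once this convergence is in hand. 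I would state this briefly and refer to standard parabolic theory, skipping the routine verification.
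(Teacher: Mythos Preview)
Your proposal is correct and follows exactly the route the paper takes: the corollary is stated as an immediate consequence of passing to the limit $t\uparrow\infty$ in Proposition~\ref{thm:correl}, using the qualitative relaxation of $F^N$ to $M^N$ together with the mean-field relaxation of Lemma~\ref{prop:estim-Vts-Wk1*}(i). The additional care you take in justifying the limit (precompactness via parabolic regularity on the torus, uniqueness of the invariant measure, continuity of the polynomial map~\eqref{eq:def-cumGm}) simply fleshes out what the paper leaves implicit.
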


\subsection{Relaxation of correlations}
\label{subsec:Bogolyubov}

With the above tools at hand, we now turn to the proof of Theorem~\ref{th:non-pert}, and we start by adapting Proposition~\ref{prop:GGtildeCCtilde-Langevin}. Instead of a direct bootstrap argument, we now need to proceed by induction.
There is also another difference: uniform-in-$N$ Gibbs relaxation is no longer known to hold a priori (in contrast with Proposition~\ref{prop:correlations_L2_Langevin}(ii) in the small-$\kappa$ setting). We thus need to assume that some weak form of Gibbs relaxation holds a priori, which we formulate for convenience in terms of a cross error estimate, cf.~\eqref{eq:ass-cross1}.
Note that~\eqref{eq:ass-cross1} is already known to hold at least for $\theta_0=1$ as a consequence of Proposition~\ref{thm:correl} and Corollary~\ref{cor:correl}.

\begin{prop}
\label{prop:GGtildeCCtilde}
Assume that there exist $\theta_0,\ell_0\ge1$ such that for all $t\ge0$ and $N\ge1$,
\begin{equation}\label{eq:ass-cross1}
\|F^{N,1}_t-\mu_t-M^{N,1}+M\|_{W^{-\ell_0,1}(\T^d)}\,\lesssim\,N^{-1}e^{-c_0t}+N^{-\theta_0}.
\end{equation}
In both Cases~1 and~2, for any $\kappa\ge0$, we then have for all $2\le m\le N$ and $t\ge0$,
\begin{align}\label{eq:prop_G_m_full_diff}
\|G^{N,m}_t-C^{N,m}\|_{W^{-\ell_m,1}(\T^{dm})}\,\lesssim\,N^{1-m}e^{-c_0t}+N^{1-m-\theta_0},
\end{align}
for some $\ell_m$ only depending on $\ell_0,\theta_0,m$, and some multiplicative factor further depending on $d,K,\mu_\circ,\kappa$.
\end{prop}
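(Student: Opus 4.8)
The plan is to mirror the hierarchical argument behind Proposition~\ref{prop:GGtildeCCtilde-Langevin}, replacing the global-in-$m$ bootstrap (which crucially used the smallness of $\kappa$) by an induction on the order~$m$, and replacing the weighted-$L^2$ estimates by weak-norm estimates built on the ergodic bounds of Lemma~\ref{prop:estim-Vts-Wk1*}. First I would take the difference between the BBGKY hierarchy for the correlation functions $G^{N,m}$ (the overdamped torus version of Lemma~\ref{lem:hier-corr-Langevin}, with $H_k,S_{k,\ell}$ as in~\eqref{eq:redef-SH}) and the stationary hierarchy satisfied by the equilibrium correlations $C^{N,m}$, obtaining an equation for $\delta^{N,m}:=G^{N,m}-C^{N,m}$. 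The key preliminary manipulation is to \emph{reorganize} this equation by moving the mean-field linearization into the generator: the extreme pieces ($\sharp A\in\{0,m-1\}$) of the quadratic sums, in which $G^{N,m}$ (resp.\ $C^{N,m}$) is multiplied by $F^{N,1}$ (resp.\ $M^{N,1}$) through an operator $H_k$, combine with $L_0^{(m)}=\sum_j\triangle_{x_j}$ to form the tensorized linearized McKean--Vlasov operator $L_M^{(m)}:=\sum_{j=1}^m(\Id^{\otimes m-j}\otimes L_M\otimes\Id^{\otimes j-1})$, up to a remainder proportional to $F^{N,1}-M^{N,1}$ and to $m/N$. One is thus led to $(\partial_t-L_M^{(m)})\delta^{N,m}=\mathcal R^{N,m}$, where $\mathcal R^{N,m}$ decomposes into: (a) the ``up-coupling'' $\kappa\frac{N-m}{N}\sum_k H_k\delta^{N,m+1}$ to the next order; (b) products $\delta^{N,\ell}\otimes(\text{correlation})$ of strictly lower orders $\ell<m$; (c) terms that are either explicitly $O(1/N)$ smaller than the target (the sums carrying a factor $\tfrac1N$, as in Proposition~\ref{prop:GGtildeCCtilde-Langevin}) or involve the factor $F^{N,1}-M^{N,1}$, which is controlled via $\|F^{N,1}_s-M^{N,1}\|_{W^{-\ell,1}}\lesssim e^{-c_0s}+N^{-\theta_0}$ from~\eqref{eq:ass-cross1} together with Lemma~\ref{prop:estim-Vts-Wk1*}(i).

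Next I would run Duhamel's formula against $\{e^{tL_M^{(m)}}\}_{t\ge0}$ and take $W^{-\ell_m,1}(\T^{dm})$ norms, with $\ell_m$ chosen large enough (growing with $m$) to absorb the derivative lost at each application of $H_k$ or $S_{k,\ell}$. Tensorizing Lemma~\ref{prop:estim-Vts-Wk1*}(ii) gives $\|e^{tL_M^{(m)}}h\|_{W^{-\ell,1}}\lesssim_\ell e^{-mc_0t}\|h\|_{W^{-\ell,1}}$ for $h$ of mean zero in each variable, which holds for $\delta^{N,m}$, $C^{N,m}$ and $H_k\delta^{N,m+1}$ thanks to the maximality condition~\eqref{eq:integ-correl}. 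The homogeneous contribution is $-e^{tL_M^{(m)}}C^{N,m}$ (since $G^{N,m}_\circ=0$ for $m\ge2$ by tensorization), bounded by $e^{-mc_0t}\|C^{N,m}\|_{W^{-\ell_m,1}}\lesssim_m N^{1-m}e^{-c_0t}$ via Corollary~\ref{cor:correl}. For the source, one uses the uniform-in-time correlation estimates of Proposition~\ref{thm:correl} and Corollary~\ref{cor:correl} on the correlation factors, so that a lower-order product $\delta^{N,\ell}\otimes(\text{correlation of order }m+1-\ell)$ is bounded by $\|\delta^{N,\ell}_s\|\,N^{-(m-\ell)}\lesssim_m N^{1-m}(e^{-c_0s}+N^{-\theta_0})$ once the inductive bound at order $\ell<m$ is available, and $\int_0^te^{-mc_0(t-s)}(e^{-c_0s}+N^{-\theta_0})\,\ddr s\lesssim_m e^{-c_0t}+N^{-\theta_0}$ reproduces the claimed decay. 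Isolating the contributions where $\delta^{N,m}$ itself reappears (they carry a prefactor $O(m/N)$ or a factor $F^{N,1}-M^{N,1}$, hence are of lower order and can be moved to the left for $N$ large relative to $m$), one reaches a schematic estimate
\[
\sup_{[0,t]}\tfrac{\|\delta^{N,m}_s\|_{W^{-\ell_m,1}}}{e^{-c_0s}+N^{-\theta_0}}\;\lesssim_{m,\kappa}\;N^{1-m}+\sup_{[0,t]}\tfrac{\|\delta^{N,m+1}_s\|_{W^{-\ell_{m+1},1}}}{e^{-c_0s}+N^{-\theta_0}}+(\text{terms already controlled at orders }<m).
\]

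The main obstacle is precisely the up-coupling to $\delta^{N,m+1}$: unlike every other remainder contribution it carries no gaining factor $1/N$ (its prefactor is $\kappa\frac{N-m}{N}\approx\kappa$), so for large $\kappa$ it cannot be absorbed perturbatively at fixed $N$. The way around it is to exploit that the hierarchy terminates, $\delta^{N,N+1}=0$: one substitutes the equation for $\delta^{N,m+1}$ (itself controlled, up to its own up-coupling, by strictly lower orders through the same inequality) and iterates the substitution up to order $N$. Since the natural size of $\delta^{N,m+1}$ is $N^{-m}=N^{-1}\cdot N^{1-m}$, each up-coupling step contributes only at relative order $N^{-1}$; the finitely many values $N\le N_0(m,\kappa)$ are dispatched by enlarging the constant, while for $N>N_0(m,\kappa)$ the iterated up-coupling is a genuine small perturbation and the inequality closes, yielding $\|\delta^{N,m}_t\|_{W^{-\ell_m,1}}\lesssim_m N^{1-m}(e^{-c_0t}+N^{-\theta_0})$ by induction on $m$. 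The remaining bookkeeping — that the Sobolev losses $\ell_m$, the combinatorial factors from the quadratic sums of the hierarchy, and the $m$-dependent constants all stay finite at each step — is routine and parallels the computation leading to~\eqref{eq:pre-est-GNm}; I expect no further conceptual difficulty there.
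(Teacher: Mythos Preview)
Your setup is right: reorganizing the hierarchy around $L_M^{(m)}$, using Duhamel in $W^{-\ell,1}$, tensorizing Lemma~\ref{prop:estim-Vts-Wk1*}(ii), handling the initial data via Corollary~\ref{cor:correl}, and pulling the $F^{N,1}-M^{N,1}$ factor out with assumption~\eqref{eq:ass-cross1} are exactly the ingredients of the paper's proof. The gap is in how you propose to close the hierarchy.

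An induction on $m$ cannot work as you describe it. The up-coupling term $\delta^{N,m+1}$ is unavailable when you prove the bound at level $m$, and your fix --- ``iterate the substitution up to order $N$'' --- breaks down in two ways. First, iterating all the way to $N$ multiplies together $N-m$ constants $C_{m+j,\kappa}$, which blows up. Second, even if you iterate only finitely many times (say $k\sim\theta_0$), each substitution of the equation for $\delta^{N,m+j}$ reintroduces, through its quadratic source terms, factors $\delta^{N,p}$ with $m\le p\le m+j-1$: these are neither covered by the induction on $m$ nor eliminated by the previous substitutions, so you end up with a coupled system for $A_m,\ldots,A_{m+k}$ rather than a closed inequality. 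Your claim that ``each up-coupling step contributes only at relative order $N^{-1}$'' presupposes the very bound you are trying to prove; using only the a~priori estimate $\|\delta^{N,m+1}_s\|\lesssim N^{-m}$ (no time decay) gives, after dividing by $e^{-c_0s}+N^{-\theta_0}$, a contribution $N^{\theta_0-m}$, which exceeds the target $N^{1-m}$ as soon as $\theta_0>1$.

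The paper avoids this by a different induction. It fixes an auxiliary index $\alpha\ge0$ and assumes, \emph{for all $m\ge2$ simultaneously},
\[
\|\delta^{N,m}_t\|_{W^{-\ell_m,1}}\,\le\,C_m\Big(N^{1-m}e^{-c_0t}+N^{1-m-\theta_0}+N^{-\frac{m+\alpha}{2}}\Big).
\]
The base case $\alpha=0$ is just the a~priori correlation bound (since $\tfrac m2\le m-1$). One then inserts this hypothesis into the Duhamel estimate: the up-coupling term is now controlled by the \emph{same} hypothesis applied at level $m+1$, producing $N^{-(m+1+\alpha)/2}=N^{-(m+(\alpha+1))/2}$, while the quadratic and remainder terms gain at least as much. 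This advances $\alpha\to\alpha+1$, and after $O(m+\theta_0)$ steps the extra term $N^{-(m+\alpha)/2}$ is absorbed into $N^{1-m-\theta_0}$. The point is that treating all $m$ at once makes the up-coupling self-consistent without any substitution or truncation of the hierarchy.
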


\begin{proof}
Starting point is the BBGKY hierarchy of equations for correlations in Lemma~\ref{lem:hier-corr-Langevin}, which holds unchanged in the overdamped setting with the notation~\eqref{eq:redef-SH}. Note that it can be reorganized as follows, using the operator $L_M$ instead of $L_0$ in the left-hand side,
\begin{multline*}
(\partial_t-L_M^{(m)})G^{N,m}
\,=\,\kappa\frac{N-m}{N} \sum_{k=1}^m H_k G^{N,\llbracket m \rrbracket \cup \{\ast\}}\\
- \kappa \sum_{k=1}^m \sum_{A \subset \llbracket m \rrbracket - \{k\}} \frac{m-1-\sharp A}{N} H_k\big(G^{N,A \cup \{k, \ast\}} G^{N,\llbracket m \rrbracket - \{k\} - A} \big)\\
+ \kappa \frac{N-m}{N} \sum_{k=1}^m \sum_{A \subset \llbracket m \rrbracket - \{k\}\atop0<\sharp A<m-1} H_k\big( G^{N,A \cup \{k\}} G^{N,\llbracket m \rrbracket \cup \{\ast\} - A - \{k\}} \big) \\
- \kappa \sum_{k=1}^m \sum_{A \subset \llbracket m\rrbracket - \{k\}} \sum_{B\subset \llbracket m\rrbracket - \{k\} - A} \frac{m-1-\sharp A-\sharp B}{N} H_k\big( G^{N,A \cup \{k\}} G^{N,B \cup \{*\}} G^{N,\llbracket m\rrbracket - A-B - \{k\}} \big) \\
+ \frac{\kappa}{N} \sum_{k\ne j}^m S_{k,j} G^{N,\llbracket m \rrbracket} + \frac{\kappa}{N} \sum_{k \ne j}^m \sum_{A \subset \llbracket m\rrbracket - \{k , j \}} S_{k,j}\big( G^{N,A \cup \{k\}} G^{N,\llbracket m\rrbracket - A - \{k\}}\big)\\
+ \kappa  \sum_{k=1}^m H_k\bigg( \Big(\frac{N-m}{N} F^{N,\{k\}}-M^{\{k\}}\Big) G^{N,\llbracket m \rrbracket \cup \{\ast\} - \{k\}} + \Big(\frac{N-m}{N}F^{N,\{\ast\}} -M^{\{\ast\}}\Big)G^{N,\llbracket m\rrbracket}\bigg),
\end{multline*}
where as in~\eqref{eq:def-L0m} we use the notation $L_M^{(m)}$ for the Kronecker sum of operators
\begin{equation}\label{eq:def-LM(m)}
L_M^{(m)} \,:=\, \sum_{j=1}^m \Big( \Id^{\otimes m-j} \otimes L_M \otimes \Id^{\otimes j-1} \Big).
\end{equation}
Taking the difference with the corresponding stationary equation for $C^{N,m}$, writing the Duhamel formula, taking the norm in $W^{-\ell-1,1}$, and using the ergodic properties of $L_M$, cf.\@ Lemma~\ref{prop:estim-Vts-Wk1*}(ii), we get after straightforward simplifications, for all $1\le m\le N$ and $t,\ell\ge0$,
\begin{multline*}
\|G^{N,m}_t-C^{N,m}\|_{W^{-\ell-1,1}(\T^{dm})}
\,\lesssim_m\,
e^{-mc_0t}\|G^{N,m}_\circ-C^{N,m}\|_{W^{-\ell-1,1}(\T^{dm})}\\
+\int_0^te^{-mc_0(t-s)}\|G_s^{N,m+1}-C^{N,\llbracket m \rrbracket \cup \{\ast\}}\|_{W^{-\ell,1}(\T^{d(m+1)})}\ddr s\\
+\sum_{p=2}^{m-1}\binom{m-1}{p-1} \int_0^te^{-mc_0(t-s)}\big\|G_s^{N,p}\otimes G_s^{N,m+1-p}-C^{N,p}\otimes C^{N,m+1-p}\big\|_{W^{-\ell,1}(\T^{d(m+1)})}\ddr s\\
+\frac{1}{N}\sum_{p=1}^{m-1}\sum_{r=1}^{m-p}\binom{m-1}{p-1,r-1}\int_0^te^{-mc_0(t-s)}\big\| G_s^{N,p} \otimes G_s^{N,r}\otimes G_s^{N,m+1-p-r}\\[-2mm]
\hspace{9cm}-C^{N,p}\otimes C^{N,r}\otimes C^{N,m+1-p-r}\big\|_{W^{-\ell,1}(\T^{d(m+1)})}\ddr s\\
+\frac{1}{N} \sum_{p=1}^{m-1}\binom{m-2}{p-1} \int_0^te^{-mc_0(t-s)}\big\| G_s^{N,p}\otimes G_s^{N,m-p}-C^{N,p}\otimes C^{N,m-p}\big\|_{W^{-\ell,1}(\T^{dm})}\ddr s\\
+\frac{1}{N}\int_0^te^{-mc_0(t-s)}\|G_s^{N,m}-C^{N,m}\|_{W^{-\ell,1}(\T^{dm})}\ddr s\\
+\frac{1}{N} \int_0^te^{-mc_0(t-s)}\big\|G_s^{N,m}\otimes F_s^{N,1}-C^{N,m}\otimes M^{N,1}\big\|_{W^{-\ell,1}(\T^{d(m+1)})}\ddr s\\
+\int_0^te^{-mc_0(t-s)}\big\|(F_s^{N,1} -M)\otimes G_s^{N,m}-(M^{N,1} -M)\otimes C^{N,m}\big\|_{W^{-\ell,1}(\T^{d(m+1)})}\ddr s.
\end{multline*}
Note that there is indeed a loss of one derivative when estimating $H_k$ and $S_{k,j}$ in $W^{-\ell-1,1}$: in contrast with the $L^2$ estimates in previous sections, we can no longer exploit dissipation in $W^{-k,1}$.
From here, we argue by induction.
Assume that for some $\alpha\ge0$ we have for all $2\le m\le N$ and~$t\ge0$,
\begin{equation}\label{eq:ind-ass-GNmCNm}
\|G_t^{N,m}-C^{N,m}\|_{W^{-\ell_m,1}(\T^{dm})}\le C_m\Big( N^{1-m}e^{-c_0t}+N^{1-m-\theta_0}+N^{-\frac{m+\alpha}2}\Big),
\end{equation}
for some $\ell_m,C_m\ge0$. This estimate obviously holds with $\alpha=0$ as a consequence of correlation estimates in Proposition~\ref{thm:correl} and Corollary~\ref{cor:correl} with $\frac m2\le m-1$.
Inserting~\eqref{eq:ind-ass-GNmCNm} into the above, using the following decomposition for the last term,
\begin{multline*}
(F^{N,1} - M) \otimes G^{N,m} - (M^{N,1} - M) \otimes C^{N,m} \\
= (F^{N,1}-\mu) \otimes (G^{N,m}-C^{N,m}) + (\mu - M) \otimes (G^{N,m} - C^{N,m}) + (F^{N,1} - M^{N,1}) \otimes C^{N,m},
\end{multline*}
and appealing to Lemma~\ref{prop:estim-Vts-Wk1*}(i), Proposition~\ref{thm:correl}, and Corollary~\ref{cor:correl}, we deduce for all $2\le m\le N$ and~$t\ge0$,
\begin{multline*}
\|G^{N,m}_t-C^{N,m}\|_{W^{-\ell_m',1}(\T^{dm})}
\,\lesssim_m\,
N^{1-m}e^{-c_0t}
+N^{1-m-\theta_0}
+N^{-\frac{m+\alpha+1}2}\\
+N^{1-m}\int_0^te^{-mc_0(t-s)}\|F^{N,1} -M^{N,1}\|_{W^{1-\ell_m',1}(\T^d)},
\end{multline*}
for some $\ell_m'$ only depending on $\ell_m,m$, and some multiplicative factor also depending on $d,C_m,K,\mu_\circ,\kappa$. Now decomposing
\[F^{N,1} -M^{N,1}\,=\,(F^{N,1} -\mu-M^{N,1}+M)+(\mu-M),\]
and appealing to the induction assumption~\eqref{eq:ass-cross1} and to Lemma~\ref{prop:estim-Vts-Wk1*}(i), we get
\begin{equation*}
\|G^{N,m}_t-C^{N,m}\|_{W^{-\ell_m',1}(\T^{dm})}
\,\lesssim_m\,
N^{1-m}e^{-c_0t}
+N^{1-m-\theta_0}
+N^{-\frac{m+\alpha+1}2},
\end{equation*}
that is, \eqref{eq:ind-ass-GNmCNm} with $\alpha$ replaced by $\alpha+1$. The conclusion~\eqref{eq:prop_G_m_full_diff} follows by induction.
\end{proof}

\subsection{Cross error estimate for many-particle densities}\label{subsec:higher_order}

Based on relaxation estimates for correlations in Proposition~\ref{prop:GGtildeCCtilde},
the proof of Proposition~\ref{prop:higher_order_marginals_strong} can be immediately repeated in the present setting. We skip the details for shortness.

\begin{prop}\label{prop:higher_order_marginals_strong-re}
Assume that there exist $\theta_0,\ell_0\ge0$ such that for all $t\ge0$ and $N\ge1$,
\begin{equation}\label{eq:ass-cross1-re}
\|F^{N,1}_t-\mu_t-M^{N,1}+M\|_{W^{-\ell_0,1}(\T^d)}\,\lesssim\,N^{-1}e^{-c_0t}+N^{-\theta_0}.
\end{equation}
In both Cases~1 and~2, for any $\kappa\ge0$, we then have for all $2\le m \le N$ and $t\ge0$,
\begin{equation*}
\|F^{N,m}_t - \mu_t^{\otimes m} - M^{N,m} + M^{\otimes m}\|_{W^{-\ell_m,1}(\T^{dm})} \,\lesssim_m\, N^{-1}e^{-c_0t}+N^{-\theta_0},
\end{equation*}
for some $\ell_m > 0$ only depending on $\ell_0,\theta_0,m$,
and some multiplicative factor further depending on $d,K,\mu_\circ,\kappa$.
\end{prop}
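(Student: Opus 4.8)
The plan is to mirror the proof of Proposition~\ref{prop:higher_order_marginals_strong}, replacing the weighted $L^2$ estimates by their weak-norm counterparts and invoking Propositions~\ref{thm:correl} and~\ref{prop:GGtildeCCtilde} together with Corollary~\ref{cor:correl} in place of Proposition~\ref{prop:correlations_L2_Langevin} and Corollary~\ref{cor:correl-Langevin}. Starting point is the cluster expansion~\eqref{eq:cluster-exp0}, written both for $F^N$ and for $M^N$; singling out the fully disconnected partition $\pi=\{\{1\},\ldots,\{m\}\}$, for which $G^{N,1}=F^{N,1}$ and $C^{N,1}=M^{N,1}$, yields
\begin{multline*}
F^{N,m}_t-\mu_t^{\otimes m}-M^{N,m}+M^{\otimes m}
\,=\,\Big((F^{N,1}_t)^{\otimes m}-\mu_t^{\otimes m}-(M^{N,1})^{\otimes m}+M^{\otimes m}\Big)\\
+\sum_{\genfrac{}{}{0pt}{2}{\pi\vdash\llbracket m\rrbracket}{\sharp\pi<m}}\Big(\prod_{A\in\pi}G^{N,A}_t-\prod_{A\in\pi}C^{N,A}\Big),
\end{multline*}
and it remains to bound the two right-hand side terms in $W^{-\ell_m,1}(\T^{dm})$.

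For the first term I would telescope the second-order difference of tensor products exactly as in Step~1 of the proof of Proposition~\ref{prop:higher_order_marginals_strong}: one writes it as a sum over $k\in\llbracket m\rrbracket$ of tensor products carrying the single-particle cross error $F^{N,1}_t-\mu_t-M^{N,1}+M$, plus a sum over pairs $k<\ell$ of tensor products carrying simultaneously a mean-field error (either $F^{N,1}_t-\mu_t$ or $\mu_t-M$) and a relaxation error (either $F^{N,1}_t-M^{N,1}$ or $F^{N,1}_t-\mu_t$). Since $\|\cdot\|_{W^{-\ell,1}}$ is, for integer $\ell$, sub-multiplicative under tensor products up to a constant depending on $d,\ell,m$, this is bounded by $\|F^{N,1}_t-\mu_t-M^{N,1}+M\|_{W^{-\ell_0,1}(\T^d)}$ plus a product of the form $\|F^{N,1}_t-\mu_t\|_{W^{-\ell',1}(\T^d)}\,\|\mu_t-M\|_{W^{-\ell',1}(\T^d)}$. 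The hypothesis~\eqref{eq:ass-cross1-re}, uniform-in-time propagation of chaos from Proposition~\ref{thm:correl}, and mean-field relaxation from Lemma~\ref{prop:estim-Vts-Wk1*}(i) then deliver the desired bound $N^{-1}e^{-c_0t}+N^{-\theta_0}$, at the cost of enlarging the regularity exponent.

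For the connected-partition terms I would proceed as in Step~2 of that proof: by a Leibniz-type expansion, $\prod_{A\in\pi}G^{N,A}_t-\prod_{A\in\pi}C^{N,A}$ is a sum over $A\in\pi$ of the factor $G^{N,\sharp A}_t-C^{N,\sharp A}$ times the remaining factors, which are $G^{N,\sharp B}_t$ or $C^{N,\sharp B}$. The relaxation estimate for correlations in Proposition~\ref{prop:GGtildeCCtilde} (supplemented by~\eqref{eq:ass-cross1-re} for the block of size one) gives $\|G^{N,k}_t-C^{N,k}\|\lesssim_k(N^{-1}+\mathds1_{\{k=1\}})(e^{-c_0t}+N^{-\theta_0})$, while the plain correlation bounds of Proposition~\ref{thm:correl} and Corollary~\ref{cor:correl} give $\|G^{N,k}_t\|+\|C^{N,k}\|\lesssim_k N^{-1}+\mathds1_{\{k=1\}}$, all in the appropriate $W^{-\ell_k,1}$ norms. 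Hence each such term is $\lesssim_m(e^{-c_0t}+N^{-\theta_0})\prod_{A\in\pi}(N^{-1}+\mathds1_{\{\sharp A=1\}})$, and since $\sharp\pi<m$ forces at least one block of size $\ge2$, the product carries at least one factor $N^{-1}$, so the term is $\lesssim_m N^{-1}e^{-c_0t}+N^{-\theta_0}$. Summing over the finitely many partitions $\pi$ concludes.

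The only genuinely new point compared with Proposition~\ref{prop:higher_order_marginals_strong} — and the place requiring care — is the bookkeeping of regularity exponents: every cross-norm inequality and every appeal to the ergodic estimates of Lemma~\ref{prop:estim-Vts-Wk1*} in negative Sobolev spaces costs derivatives, so $\ell_m$ must be chosen large enough, depending on $\ell_0,\theta_0,m$ through the exponents appearing in Proposition~\ref{prop:GGtildeCCtilde} and in the inputs listed above. This is routine but must be tracked. No bootstrap or absorption step is needed here, since relaxation of the correlations is already available from Proposition~\ref{prop:GGtildeCCtilde}.
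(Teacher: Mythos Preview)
Your proposal is correct and follows exactly the approach the paper intends: the paper's own proof consists of the single sentence ``the proof of Proposition~\ref{prop:higher_order_marginals_strong} can be immediately repeated in the present setting,'' and you have carried out precisely that repetition, with the appropriate substitutions (Propositions~\ref{thm:correl} and~\ref{prop:GGtildeCCtilde}, Corollary~\ref{cor:correl}, Lemma~\ref{prop:estim-Vts-Wk1*} in place of their $L^2$ counterparts) and with correct attention to the bookkeeping of regularity exponents.
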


\subsection{Proof of Theorem~\ref{th:non-pert}}
\label{subsec:conclusion}
Again, we need to avoid bootstrap arguments and we rather appeal to induction: assuming that there exist $\ell_0,\theta_0\ge0$ such that for all $t\ge0$ and $N\ge1$,
\begin{equation}\label{eq:thm-main-induc0}
\|F_t^{N,1}-\mu_t-M^{N,1}+M\|_{W^{-\ell_0,1}(\T^d)}\,\lesssim\,N^{-1}e^{-c_0t}+N^{-\theta_0},
\end{equation}
we shall show that the same automatically holds with $N^{-\theta_0}$ replaced by $N^{-\theta_0-1}$, up to increasing $\ell_0$; more precisely, for all $t \ge 0$ and $N\ge1$,
\begin{equation}\label{eq:thm-main-induc-pr0}
\|F_t^{N,1}-\mu_t-M^{N,1}+M\|_{W^{-\ell_0',1}(\T^d)}\,\lesssim\,N^{-1}e^{-c_0t}+N^{-\theta_0-1},
\end{equation}
for some $\ell_0'$ depending only on $\ell_0,\theta_0$.
Since~\eqref{eq:thm-main-induc0} with $\theta_0=1$ and some $\ell_0$ already follows from Proposition~\ref{thm:correl} and Corollary~\ref{cor:correl}, this would indeed yield the conclusion~\eqref{eq:cross-weakre} for $m=1$ by induction, and we recall that this is sufficient to conclude for all $2 \le m \le N$ by Proposition~\ref{prop:higher_order_marginals_strong-re}. Finally, the Gibbs relaxation result~\eqref{eq:relax-weak} follows from~\eqref{eq:cross-weakre} together with Lemma~\ref{prop:estim-Vts-Wk1*}.

We turn to the proof of~\eqref{eq:thm-main-induc-pr0} given~\eqref{eq:thm-main-induc0}.
Starting point is the BBGKY equation~\eqref{eq:cross-error} for the cross error, which can be reorganized as follows, after straightforward manipulations, using the operator $L_M$ instead of $L_0$ in the left-hand side,
\begin{multline*}
(\partial_t - L_M) \big( F^{N,1} - \mu -  M^{N,1} + M \big)\\
\,=\,
\kappa H_1\bigg(\frac{N-1}N\big(G^{N,\{1,*\}}-C^{N,\{1,*\}}\big)
-\frac{1}N\big(F^{N,\{1\}}F^{N,\{*\}}-M^{N,\{1\}}M^{N,\{*\}}\big)\\
+(F^{N,\{*\}}-\mu^{\{*\}}-M^{N,\{*\}}+M^{\{*\}})(F^{N,\{1\}}-\mu^{\{1\}})
+(\mu^{\{*\}}-M^{\{*\}})(F^{N,\{1\}}-\mu^{\{1\}})\\
+(M^{N,\{*\}}- M^{\{*\}})( F^{N,\{1\}} - \mu^{\{1\}} -  M^{N,\{1\}} + M^{\{1\}})
+(F^{N,\{*\}}-\mu^{\{*\}})(\mu^{\{1\}}-M^{\{1\}})
\bigg).
\end{multline*}
Writing the Duhamel formula, taking the norm in $W^{-\ell-1,1}$, and using the ergodic properties of $L_M$, cf.\@ Lemma~\ref{prop:estim-Vts-Wk1*}(ii), we get for all $t,\ell\ge0$,
\begin{multline*}
\big\| F^{N,1}_t - \mu_t -  M^{N,1} + M \big\|_{W^{-\ell-1,1}(\T^d)}
\,\lesssim\,
e^{-c_0t}\big\| F^{N,1}_\circ - \mu_\circ -  M^{N,1} + M \big\|_{W^{-\ell-1,1}(\T^d)}\\
+\int_0^te^{-c_0(t-s)}\bigg(\|G_s^{N,2}-C^{N,2}\|_{W^{-\ell,1}(\T^{2d})}
+\frac{1}N\|(F_s^{N,1})^{\otimes 2}-(M^{N,1})^{\otimes2}\|_{W^{-\ell,1}(\T^{2d})}\\
+\|\mu_s-M\|_{W^{-\ell,1}(\T^d)}\|F^{N,1}_s-\mu_s\|_{W^{-\ell,1}(\T^d)}\\
+\big\|F^{N,1}_s-\mu_s-M^{N,1}+M\big\|_{W^{-\ell,1}(\T^d)}\Big(\|F^{N,1}_s-\mu_s\|_{W^{-\ell,1}(\T^d)}+\|M^{N,1}- M\|_{W^{-\ell,1}(\T^d)}\Big)
\bigg)\ddr s.
\end{multline*}
Under the induction assumption~\eqref{eq:thm-main-induc0}, appealing to Lemma~\ref{prop:estim-Vts-Wk1*}(i), Proposition~\ref{thm:correl}, Corollary~\ref{cor:correl}, and Proposition~\ref{prop:GGtildeCCtilde}, this precisely leads us to~\eqref{eq:thm-main-induc-pr0}, and the conclusion follows.
\qed

\section*{Acknowledgements}
AB acknowledges financial support from the AAP ``Accueil'' from Universit\'e Lyon 1 Claude Bernard,
and MD from the European Union (ERC, PASTIS, Grant Agreement n$^\circ$101075879).\footnote{Views and opinions expressed are however those of the authors only and do not necessarily reflect those of the European Union or the European Research Council Executive Agency. Neither the European Union nor the granting authority can be held responsible for them.}

\bibliographystyle{plain}
\bibliography{biblio}

\begin{thebibliography}{10}

\bibitem{Barre2017}
J.~Barr\'e, P.~Degond, and E.~Zatorska.
\newblock Kinetic theory of particle interactions mediated by dynamical
  networks.
\newblock {\em Multiscale Model. Simul.}, 15(3):1294--1323, 2017.

\bibitem{BD_2024}
A.~Bernou and M.~Duerinckx.
\newblock {Uniform-in-time estimates on corrections to mean field for
  interacting {B}rownian particles}.
\newblock Preprint, arXiv:2405.19306.

\bibitem{BDM_25}
A.~Bernou, M.~Duerinckx, and M.~M\'enard.
\newblock Creation of chaos for interacting {B}rownian particles.
\newblock Preprint, arXiv:2504.09917.

\bibitem{Bolley_2010}
F.~Bolley, A.~Guillin, and F.~Malrieu.
\newblock Trend to equilibrium and particle approximation for a weakly
  selfconsistent {V}lasov-{F}okker-{P}lanck equation.
\newblock {\em M2AN Math. Model. Numer. Anal.}, 44(5):867--884, 2010.

\bibitem{Bouin_2020}
E.~Bouin, J.~Dolbeault, S.~Mischler, C.~Mouhot, and C.~Schmeiser.
\newblock Hypocoercivity without confinement.
\newblock {\em Pure Appl. Anal.}, 2(2):203--232, 2020.

\bibitem{BJS-22}
D.~Bresch, P.-E. Jabin, and J.~Soler.
\newblock A new approach to the mean-field limit of {V}lasov-{F}okker-{P}lanck
  equations.
\newblock {\em Anal. PDE}, 18(4):1037--1064, 2025.

\bibitem{Carrillo_2019}
J.~A. Carrillo, R.~S. Gvalani, G.~A. Pavliotis, and A.~Schlichting.
\newblock Long-time behaviour and phase transitions for the {M}c{K}ean-{V}lasov
  equation on the torus.
\newblock {\em Arch. Ration. Mech. Anal.}, 235(1):635--690, 2020.

\bibitem{Carrillo_2023}
J.~A. Carrillo, R.~S. Gvalani, and J.~S.-H. Wu.
\newblock An invariance principle for gradient flows in the space of
  probability measures.
\newblock {\em J. Differential Equations}, 345:233--284, 2023.

\bibitem{Cattiaux-Guillin-Malrieu-08}
P.~Cattiaux, A.~Guillin, and F.~Malrieu.
\newblock Probabilistic approach for granular media equations in the
  non-uniformly convex case.
\newblock {\em Probab. Theory Related Fields}, 140(1-2):19--40, 2008.

\bibitem{Chen_2024}
F.~Chen, Y.~Lin, Z.~Ren, and S.~Wang.
\newblock Uniform-in-time propagation of chaos for kinetic mean field
  {L}angevin dynamics.
\newblock {\em Electron. J. Probab.}, 29:Paper No. 17, 43, 2024.

\bibitem{Rosenzweig-Serfaty-23}
A.~Chodron~de Courcel, M.~Rosenzweig, and S.~Serfaty.
\newblock Sharp uniform-in-time mean-field convergence for singular periodic
  {R}iesz flows.
\newblock {\em Ann. Inst. H. Poincar\'e{} C Anal. Non Lin\'eaire},
  42(2):391--472, 2025.

\bibitem{Delarue_Tse_21}
F.~Delarue and A.~Tse.
\newblock Uniform in time weak propagation of chaos on the torus.
\newblock {\em Ann. Inst. Henri Poincar\'e{} Probab. Stat.}, 61(2):1021--1074,
  2025.

\bibitem{Dolbeault_2009}
J.~Dolbeault, C.~Mouhot, and C.~Schmeiser.
\newblock Hypocoercivity for kinetic equations with linear relaxation terms.
\newblock {\em C. R. Math. Acad. Sci. Paris}, 347(9-10):511--516, 2009.

\bibitem{Dolbeault_Hypocoercivity_2015}
J.~Dolbeault, C.~Mouhot, and C.~Schmeiser.
\newblock Hypocoercivity for linear kinetic equations conserving mass.
\newblock {\em Trans. Amer. Math. Soc.}, 367(6):3807--3828, 2015.

\bibitem{MD-21}
M.~Duerinckx.
\newblock {On the Size of Chaos via Glauber Calculus in the Classical
  Mean-Field Dynamics}.
\newblock {\em Commun. Math. Phys.}, 382:613--653, 2021.

\bibitem{DJ-25}
M.~Duerinckx and P.-E. Jabin.
\newblock Correlation estimates for {B}rownian particles with singular
  interactions.
\newblock Preprint, arXiv:2510.01507, 2025.

\bibitem{DEGZ-20}
A.~Durmus, A.~Eberle, A.~Guillin, and R.~Zimmer.
\newblock An elementary approach to uniform in time propagation of chaos.
\newblock {\em Proc. Amer. Math. Soc.}, 148(12):5387--5398, 2020.

\bibitem{Engel-Nagel}
K.-J. Engel and R.~Nagel.
\newblock {\em One-parameter semigroups for linear evolution equations}, volume
  194 of {\em Graduate Texts in Mathematics}.
\newblock Springer-Verlag, New York, 2000.

\bibitem{Guillin_2022}
A.~Guillin, P.~Le~Bris, and P.~Monmarch\'{e}.
\newblock Convergence rates for the {V}lasov-{F}okker-{P}lanck equation and
  uniform in time propagation of chaos in non convex cases.
\newblock {\em Electron. J. Probab.}, 27:Paper No. 124, 44, 2022.

\bibitem{Guillin_2021b}
A.~Guillin, P.~Le~Bris, and P.~Monmarch\'e.
\newblock Uniform in time propagation of chaos for the 2{D} vortex model and
  other singular stochastic systems.
\newblock {\em J. Eur. Math. Soc. (JEMS)}, 27(6):2359--2386, 2025.

\bibitem{Guillin_2021}
A.~Guillin, W.~Liu, L.~Wu, and C.~Zhang.
\newblock The kinetic {F}okker-{P}lanck equation with mean field interaction.
\newblock {\em J. Math. Pures Appl. (9)}, 150:1--23, 2021.

\bibitem{Guillin_Liu_2022}
A.~Guillin, W.~Liu, L.~Wu, and C.~Zhang.
\newblock Uniform {P}oincar\'e{} and logarithmic {S}obolev inequalities for
  mean field particle systems.
\newblock {\em Ann. Appl. Probab.}, 32(3):1590--1614, 2022.

\bibitem{HK_02}
R.~Hegselmann and U.~Krause.
\newblock {Opinion Dynamics and Bounded Confidence. Models, Analysis, and
  Simulation}.
\newblock {\em JASSS}, 5(3), 2002.

\bibitem{Hess_Childs_2023}
E.~Hess-Childs and K.~Rowan.
\newblock Higher-order propagation of chaos in {$L^2$} for interacting
  diffusions.
\newblock {\em Probab. Math. Phys.}, 6(2):581--646, 2025.

\bibitem{Kulik_2017}
A.~Kulik.
\newblock {\em Ergodic behavior of {M}arkov processes}, volume~67 of {\em De
  Gruyter Studies in Mathematics}.
\newblock De Gruyter, Berlin, 2018.
\newblock With applications to limit theorems.

\bibitem{Lacker-21}
D.~Lacker.
\newblock Hierarchies, entropy, and quantitative propagation of chaos for mean
  field diffusions.
\newblock {\em Probab. Math. Phys.}, 4(2):377--432, 2023.

\bibitem{Lacker-LeFlem-23}
D.~Lacker and L.~Le~Flem.
\newblock Sharp uniform-in-time propagation of chaos.
\newblock {\em Probab. Theory Related Fields}, 187(1-2):443--480, 2023.

\bibitem{Malrieu-03}
F.~Malrieu.
\newblock Convergence to equilibrium for granular media equations and their
  {E}uler schemes.
\newblock {\em Ann. Appl. Probab.}, 13(2):540--560, 2003.

\bibitem{Monmarche_2017}
P.~Monmarch{\'{e}}.
\newblock Long-time behaviour and propagation of chaos for mean field kinetic
  particles.
\newblock {\em Stochastic Process. Appl.}, 127(6):1721--1737, 2017.

\bibitem{Otto_2007}
F.~Otto and M.~G. Reznikoff.
\newblock A new criterion for the logarithmic {S}obolev inequality and two
  applications.
\newblock {\em J. Funct. Anal.}, 243(1):121--157, 2007.

\bibitem{Rosenzweig-Serfaty-21}
M.~Rosenzweig and S.~Serfaty.
\newblock Global-in-time mean-field convergence for singular {R}iesz-type
  diffusive flows.
\newblock {\em Ann. Appl. Probab.}, 33(2):754--798, 2023.

\bibitem{Stroock1997}
D.~W. Stroock and S.~R.~S. Varadhan.
\newblock {\em Multidimensional diffusion processes}.
\newblock Classics in Mathematics. Springer-Verlag, Berlin, 2006.

\bibitem{Villani_2009}
C.~Villani.
\newblock Hypocoercivity.
\newblock {\em Mem. Amer. Math. Soc.}, 202(950):iv+141, 2009.

\bibitem{Xie-24}
P.~Xie.
\newblock Uniform-in-time estimates on the size of chaos for interacting
  particle systems.
\newblock Preprint, arXiv:2411.15406.

\end{thebibliography}

\end{document}